\numberwithin{equation}{section}    
\newtheorem{thm}{Theorem}[section]
\newtheorem{prop}[thm]{Proposition}
\newtheorem{cor}[thm]{Corollary}
\theoremstyle{definition}
\newtheorem{exam}[thm]{Example}
\newtheorem{defn}[thm]{Definition}
\newtheorem{conj}[thm]{Conjecture}
\newtheorem{remark}[thm]{Remark}
\newtheoremstyle{case}{}{}{}{}{}{:}{ }{}
\theoremstyle{case}
\newtheorem{case}{Case}
\newcommand{\cross}{$\mathbin{\tikz [x=1.4ex,y=1.4ex,line width=.2ex] \draw (0,0) -- (1.2,1.2) (0,1.2) -- (1.2,0);}$}%
  \definecolor{dredcolor}{rgb}{0.9,0.3,0.4}
\newcommand\LLT{\operatorname{LLT}}
\newcommand\area{\operatorname{area}}
\newcommand\SSS{\operatorname{S}}
\newcommand\SYT{\operatorname{SYT}}
\newcommand\SSYT{\operatorname{SSYT}}
\newcommand\asc{\operatorname{asc}}
\newcommand\inv{\operatorname{inv}}
\newcommand\Frob{\operatorname{Frob}}
\newcommand\inc{\operatorname{inc}}
\newcommand\maj{\operatorname{maj}}
\newcommand\stat{\operatorname{stat}}
\newcommand\comaj{\operatorname{comaj}}
\newcommand\wt{\operatorname{wt}}
\newcommand\Colo{\mathcal{C}}
\newcommand\Dyck{\operatorname{Dyck}}
\newcommand\Hess{\operatorname{Hess}}
\definecolor{cerulean}{rgb}{0.0, 0.48, 0.65}
\definecolor{dredcolor}{rgb}{0.9,0.3,0.4}
\title{$\alpha$-chromatic symmetric functions}
\author{Jim Haglund}
\address{
Department of Mathematics, University of Pennsylvania, Philadelphia, PA 19104, USA}
\email{jhaglund@math.upenn.edu}
\author{Jaeseong Oh}
\address{June E Huh Center for Mathematical Challenges, Korea Institute for Advanced Study, Seoul 02455, South Korea}
\email{jsoh@kias.re.kr}
\author{Meesue Yoo}
\address{
Department of Mathematics, Chungbuk National University, Cheongju 28644,
South Korea}
\email{meesueyoo@chungbuk.ac.kr}
\thanks{
The second author was supported by Korea Institute for Advanced Study (HP083401). 
The third author was supported by NRF grant RS-2024-00344076.
}
\keywords{$\alpha$-chromatic symmetric functions, Schur positivity, $\alpha$-binomial coefficients, rook polynomials, hit polynomials}
\subjclass[2010]{Primary:05E05  ; Secondary: 05A19, 05A30}
\begin{document}

\maketitle 

\begin{abstract}
In this paper, we introduce the \emph{$\alpha$-chromatic symmetric functions} $\chi^{(\alpha)}_\pi[X;q]$, extending Shareshian and Wachs' chromatic symmetric functions with an additional real parameter $\alpha$. We present positive combinatorial formulas with explicit interpretations. Notably, we show an explicit monomial expansion in terms of the $\alpha$-binomial basis and an expansion into certain chromatic symmetric functions in terms of the $\alpha$-falling factorial basis. Among various connections with other subjects, we highlight a significant link to $q$-rook theory, including a new solution to the $q$-hit problem posed by Garsia and Remmel in their 1986 paper introducing $q$-rook polynomials.
\end{abstract}

\section{Introduction}
\subsection{Overview}
In his seminal paper \cite{Sta95}, Stanley introduced the \emph{chromatic symmetric function} $\chi_G[X]$ as a symmetric function generalization of the chromatic polynomial for graphs. Subsequently, Shareshian and Wachs \cite{ShWa16} refined this, giving rise to the \emph{chromatic (quasi)symmetric function} $\chi_G[X;q]$ with an additional parameter $q$. Notably, Shareshian and Wachs showed that $\chi_{G(\pi)}[X;q]$ is symmetric when $G(\pi)$ is the graph  associated with a Dyck path $\pi$. For brevity, we adopt the notation $\chi_\pi[X;q] = \chi_{G(\pi)}[X;q]$. These symmetric functions have gathered substantial interest across diverse mathematical domains. For instance, various combinatorial formulas have been investigated \cite{Gas96, Atha2015, ShWa16}, and there is a wealth of research unveiling intriguing connections of chromatic symmetric functions with LLT polynomials \cite{CM2018}, Hessenberg varieties \cite{BC18, GP16}, Hecke algebras \cite{Hai93, CHBS16}, and rook theory \cite{AN21, CMP23, NT24}.

In this paper, we study a generalization of Shareshian and Wachs' chromatic symmetric functions. 
We introduce a new parameter $\alpha$ to define the \emph{$\alpha$-chromatic symmetric function} $\chi^{(\alpha)}_\pi[X;q]$ for a Dyck path $\pi$, for $\alpha \in \mathbb N$, as follows:
\begin{equation*}
    \chi^{(\alpha)}_\pi[X;q] := 
\chi_\pi\left[Q_{\alpha}X;q\right],
\end{equation*}
where $Q_{\alpha}:=\dfrac {q^\alpha-1}{q-1}$, 
and extend $\alpha$ to a real parameter using \emph{plethysm}. Notably, for a Dyck path of size $n$ (referred to here as an $n$-Dyck path), the coefficients of the $\alpha$-chromatic symmetric function $\chi^{(\alpha)}_\pi[X;q]$ lie within the $\mathbb{C}(q)$-span of $\left\{[\alpha]_q, [\alpha]_q^2, \dots, [\alpha]_q^n\right\}$, where $[\alpha]_q := \dfrac{q^\alpha-1}{q-1}$. We can express each coefficient using two alternative bases:
\begin{equation}\label{eq: two bases}
    \left\{\footnotesize{\begin{bmatrix}\alpha+k\\n\end{bmatrix}_q}\right\}_{k=0,1,\dots,n-1}\qquad\text{and}\qquad \left\{[\alpha]_q^{\underline{k}}\right\}_{k=1,2,\dots,n}.
\end{equation}
Here, \(
  [\alpha]_q^{\underline{k}} := [\alpha]_q[\alpha-1]_q\cdots[\alpha-k+1]_q
\)
is the $q$-falling factorial and $\footnotesize{\begin{bmatrix}\alpha+k\\n\end{bmatrix}_q} = \dfrac{[\alpha+k]_q^{\underline{n}}}{[n]_q!}$ denotes the $q$-binomial coefficient.
This paper presents various positive combinatorial formulas for the $\alpha$-chromatic symmetric functions in these bases or their specialization at $q=1$. Remarkably, Theorem~\ref{thm: alpha+k choose n basis monomial expansion} provides an explicit combinatorial formula for the monomial expansion, and Theorem~\ref{thm: falling factorial basis} offers an interpretation in terms of the falling factorial basis as a sum of certain chromatic symmetric functions. Corollary~\ref{Cor: Schur positivity alpha N} addresses the Schur expansion when $\alpha \in \mathbb{N}$, and Corollary~\ref{Cor: Schur positivity} discusses Schur positivity in terms of two binomial bases.

\subsection{Motivations}
Our results and conjectures intersect with several significant topics, including the \emph{LLT polynomials}. Introduced by Lascoux, Leclerc, and Thibon \cite{LLT97}, the LLT polynomials represent a class of symmetric functions that can be viewed as a $q$-analogue of a product of skew Schur functions. These polynomials are indexed by tuples of skew partitions, and when the tuple consists of single cells, they are called the \emph{unicellular} LLT polynomials. Although Grojnowski and Haiman \cite{GH07} established the Schur positivity of the LLT polynomials, finding a combinatorial formula for the Schur coefficients remains challenging. Interestingly, the Schur coefficients of the $\alpha$-chromatic symmetric functions are related to those of the unicellular LLT polynomials (Proposition~\ref{prop: Schur coefficient of LLT}). Thus, studying the Schur expansion of the $\alpha$-chromatic symmetric functions in terms of the two bases in \eqref{eq: two bases} could shed light on the remarkable problem of finding a combinatorial interpretation for the Schur coefficients of the LLT polynomials.

The second motivation for studying the Schur expansion of the $\alpha$-chromatic symmetric functions arises from analogous phenomena observed with the \emph{(integral form) Jack polynomials}. Jack polynomials, which include an additional parameter $\alpha$, form an important family of symmetric functions with applications in mathematical physics, representation theory, and combinatorics. The process of obtaining the $\alpha$-chromatic symmetric functions from the unicellular LLT polynomials bears a resemblance to the derivation of the Jack polynomials from the modified Macdonald polynomials. While a combinatorial formula for the monomial expansion of the Jack polynomials is known \cite{KS97, HHL05}, the Schur expansion has been less explored since the Jack polynomials are not Schur positive in general. Nevertheless, Alexandersson, Haglund, and Wang \cite{AHW21} conjectured that after certain rescaling, the Jack polynomials expand positively in terms of 
\[ 
    \displaystyle \left\{\footnotesize{\binom{\alpha+k}{n}} s_\lambda\right\}_{\substack{0\le k \le n-1\\ \lambda\vdash n}}, \qquad \text{or} \qquad \left\{(\alpha)^{\underline{k}} s_\lambda\right\}_{\substack{1\le k \le n \\ \lambda \vdash n}}.
\]
Given the similarity between the Jack polynomials and the $\alpha$-chromatic symmetric functions, we hope our findings on the $\alpha$-chromatic symmetric functions will provide new insights on this problem concerning the Schur expansion of the Jack polynomials.

Finally, we explore the connection between the theory of chromatic symmetric functions and rook theory. Previous research has linked rook theory with chromatic symmetric functions, particularly in the `abelian' case, as noted in \cite{SS93, AN21, CMP23}. In this paper, we use rook theory to explore the combinatorial properties of the $\alpha$-chromatic symmetric functions. Briefly, given a sequence of nonnegative integers $c_1,\ldots, c_n$ satisfying $0 \le c_1 \le c_2 \le \cdots \le c_n \le n$, we let $B$ denote the Ferrers board whose $i$th column is of height $c_i$, $1 \le i \le n$. (See \cite[Ch. 3]{StaEC} for background on rook theory). We define $\text{PR}(B)= \prod_{i=1}^n [\alpha+c_i-i+1]_q$, and call any product of $n$ many $q$-integers that can be written as $\text{PR}(B)$ for some Ferrers board $B$ a \emph{rook product}. Garsia and Remmel showed that for any Ferrers board $B$,
\begin{align*}
  \text{PR}(B) = \sum_{k=0}^n {\footnotesize{\begin{bmatrix}\alpha+k\\n\end{bmatrix}_q} h_k(B;q)}
    = \sum_{k=1}^{n} [\alpha]_q^{\underline{k}} r_{n-k} (B;q)
\end{align*}
where \( r_k (B;q)\in \mathbb{N}[q] \) is the \( q \)-rook polynomial, $h_k(B;q) \in \mathbb{N}[q]$, and $h_k(B;1)$ equals the $k$th hit number of $B$. They posed the problem of finding a statistic $\text{qstat}$ defined on rook placements $C$ of $n$ nonattacking rooks (no two in the same row or column) in the $n \times n$ square grid containing $B$, with exactly $k$ rooks on $B$ (i.e., permutations in $\mathfrak{S}_n$ that `hit' $k$ many forbidden positions encoded by the cells of $B$), such that $h_k(B;q) = \sum_{C} q^{\text{qstat}(C,B)}$. This problem was solved independently by Dworkin and the first author \cite{Hag98a, Dwo98}. In this paper, we provide a new solution to this problem, where the $q$-statistic we obtain has a natural interpretation in terms of colorings of incomparability graphs.

\subsection{Outline}
The structure of this paper is as follows. Section~\ref{Sec:Background} contains background material on symmetric functions and rook theory that we use throughout the paper. For a thorough treatment of common notation and basic properties of the ring of symmetric functions, we refer the reader to \cite[Ch. 1]{Mac1995} or \cite[Ch. 7]{Stanley2}. We also 
compute $\chi_\pi\left[Q_{\alpha};q\right]$
using plethysm and the technique of \emph{superization} developed in 
\cite{HHLRU, HHL05} (see also \cite[Ch. 6]{Hag2008}).

Section~\ref{Sec: binomial alpha+k choose n basis expansion} provides the first main theorem (Theorem~\ref{thm: alpha+k choose n basis monomial expansion}), giving a combinatorial formula for coefficients of $\chi _{\pi} ^{(\alpha)}[X;q]$ in terms of the
\[
\left\{ \footnotesize{\begin{bmatrix}\alpha+k\\n\end{bmatrix}_q}  m_\lambda\right\}_{\substack{1\le k \le n\\ \lambda\vdash n}}
\]
basis. We use the superization technique and a bijective argument to prove it.

In Section~\ref{Sec: bracket alpha choose k basis}, we first introduce a novel \emph{$XY$-technique}. This $XY$-technique leads to various positivity results. In particular, we prove the second main result (Theorem~\ref{thm: falling factorial basis}), expressing the falling factorial coefficient of $\alpha$-chromatic symmetric functions as a sum of certain chromatic symmetric functions.

Section~\ref{Sec: Schur positivity} concerns Schur positivity of the $\alpha$-chromatic symmetric functions in terms of two types of $\alpha$-binomial bases. We also discuss certain symmetry between Schur coefficients of the $\alpha$-chromatic symmetric functions.

In Section~\ref{Sec:qHitpoly}, we show that for any Dyck path $\pi$, $\chi_\pi[Q_{\alpha};q]$ is a rook product and use this fact to obtain various expansions of $\chi_\pi[Q_{\alpha};q]$ into the $\footnotesize{\begin{bmatrix}\alpha+k\\n\end{bmatrix}_q}$ and $\footnotesize{\begin{bmatrix}\alpha\\k\end{bmatrix}_q}$ bases. We show how these results, when combined with the known expansion of $\chi_\pi[X;q]$ into Gessel's fundamental quasisymmetric functions, give a new formula for the hit polynomial $h_k(B;q)$, namely, a new solution to the Garsia--Remmel $q$-hit polynomial problem.

Finally, we end this paper with Section~\ref{Sec: further remarks} which contains remarks about motivating questions and other related problems such as geometric interpretations.

\section{Background}\label{Sec:Background}

\subsection{Symmetric functions}\label{subsec: symmetric functions}

A \emph{partition} $\lambda=( \lambda_1, \lambda_2,\dots, \lambda_{\ell})$ of \( n \) is a nonincreasing sequence of positive integers satisfying \( \sum_i \lambda_i =n\). We use \( \lambda\vdash n \) to denote that $\lambda$ is a partition of $n$. 
Each \( \lambda_i \) is called a \emph{part} of \( \lambda \) and the number of parts in \( \lambda \) is called
the \emph{length} of \( \lambda \), denoted by $\ell(\lambda)$. We identify a partition \( \lambda \) with its \emph{(Young) diagram} which is
the set of cells \( \{(i, j)\, :\, 1\le i \le \ell(\lambda), \, 1\le j\le \lambda_i \} \). We use the \emph{English notation}
to draw the diagram, putting left justified $\lambda_i$ cells in each row from top to bottom.

Let \( \Lambda_F \) denote the \( F \)-algebra of symmetric functions with coefficients in \( F=\mathbb{Q}(q,t) \).
The elements in \( \Lambda_F \) may be viewed as formal power series over \( \mathbb{Q}(q,t) \) in infinitely many
variables \( X=(x_1, x_2,\dots) \) with finite degrees that are invariant under permutations of variables.
In this paper, we often replace \( t\) by \( q^\alpha \), for some \( \alpha \).

For \( \lambda\vdash n \), the \emph{monomial symmetric functions} \( m_{\lambda} \)
are defined by
\(
  m_{\lambda}[X] = \sum_{\nu}X^{\nu},
\)
where the sum is over weak compositions \( \nu \) such that the nonincreasing rearrangements of the parts of \( \nu \) equals
\( \lambda \), and \( X^\nu=x_1^{\nu_1}x_2^{\nu_2}\cdots x_n^{\nu_n} \), for \( \nu=(\nu_1,\dotsm \nu_n) \). 
The \emph{elementary symmetric functions} \( e_\lambda \) are defined by \( e_\lambda =\prod_i e_{\lambda_i}\),
where \( e_0=1 \) and \( e_n=\sum_{ i_1<\cdots <i_n} x_{i_1}\cdots x_{i_n}\), for \( n\ge 1\).
The \emph{complete homogeneous symmetric functions} \( h_\lambda \) are defined by \( h_\lambda = \prod_i h_{\lambda_i} \),
where \( h_0 =1 \) and \( h_n=\sum_{ i_1\le \cdots \le i_n} x_{i_1}\cdots x_{i_n}\), for \( n\ge 1\).
The \emph{power sum symmetric functions} \( p_\lambda \) are defined by \( p_\lambda =\prod_i p_{\lambda_i} \),
where \( p_0 =1  \) and \( p_n = \sum_i x_i ^n \) for \( n\ge 1 \).

A \emph{semistandard Young tableau} of shape \( \lambda \) is a filling of \( \lambda \)
with letters in \( \mathbb{Z}_{>0} \) such that the entries are weakly increasing across each row and strictly increasing up 
each column. For a semistandard Young tableau \( T \), we define the \emph{weight} \( \wt(T) \) of \( T \) to be the sequence
 \(( \wt_1(T), \wt_2(T),\dots) \), where \( \wt_i (T) \) is the multiplicity of \( i \) in \( T \). Especially when the weight
is \( (1,1,1,\dots,1) \), \( T \) is called \emph{standard}. Let \( \SSYT(\lambda) \) (\( \SYT(\lambda) \), resp.) denote the set of semistandard Young tableaux (standard Young tableaux, resp.)
of shape \( \lambda \). Then the \emph{Schur functions} are defined by
\[
s_\lambda[X]=\sum_{T\in \SSYT(\lambda)}X^{\wt(T)}.
\]

We let \( \omega \) be the involution on symmetric functions such that \( \omega(p_\lambda [X]) =(-1)^{|\lambda|-\ell(\lambda)}p_\lambda [X]\). It sends elementary symmetric functions to homogeneous symmetric functions, \( \omega(e_n [X]) =h_n [X]\), and Schur functions to Schur functions, \( \omega(s_\lambda [X])=s_{\lambda'}[X] \). Here $\lambda'$ is the conjugate partition of $\lambda$.


Next, we briefly review plethystic notation. 
Let \( E=E(t_1, t_2, \dots) \) be a formal Laurent series with rational coefficients in \( t_1, t_2,\dots \). The
\emph{plethystic substitution} \( p_k [E] \) is defined by replacing each \( t_i \) in \( E \) by \( t_i ^k \), i.e.,
\( p_k[E] = E\left(t_1 ^k, t_2 ^k, \dots\right)\) and
\[
  p_\lambda [E] = \prod_{i=1}^{\ell(\lambda)} p_{\lambda_i}[E].
\]
Here, we summarize some of the basic properties of plethystic substitution.
\begin{enumerate} 
\item If \( X=(x_1,x_2,\ldots)\), then 
$\displaystyle p_k[X]=\sum_{i\ge 1}x_i ^k=p_k (x_1,x_2,\dots)$.
\item For a real parameter $z$, $p_k [zX]=z^k p_k[X]$.
\item $p_k[(1-t)X]=\sum_{i}x_i ^k (1-t^k)=(1-t^k)p_k[X]$.
\item $\displaystyle p_k \left[ \frac{X}{1-q}\right]=\sum_{i}\frac{x_i ^k}{1-q^k}=\frac{1}{1-q^k}p_k[X]$.
\end{enumerate}
Then for an arbitrary symmetric function \( f \), we define \( f[E] \) by \( \sum_{\lambda}c_\lambda p_\lambda [E] \)
if \( f=\sum_{\lambda}c_\lambda p_\lambda \). Due to the above property (1), it easily follows that for any \( f\in \Lambda_F \), \( f[X]=f(x_1,x_2,\dots) \). For this reason, it is a common convention
in plethystic expression that \( X \) stands for the summation of variables \( x_1+x_2+\cdots \). In a similar vein, for $X=x_1+x_2+\cdots, Y=y_1+y_2+\cdots$, we have $f[X+Y]=f(x_1,x_2,\dots,y_1,y_2,\dots)$ and $f[XY]=f(x_1y_1,x_1y_2,\dots,x_2y_1,x_2y_2,\dots,)$.

If we let \( Z=(-x_1,-x_2,\dots) \), then \( p_k(Z)=\sum_i (-1)^k x_i ^k \) which is different from
\( p_k [-X]=\sum_i( -x_i ^k) \). Thus we introduce a symbol \( \epsilon \) to denote the negation of variables inside plethystic brackets, i.e., \( p_k[\epsilon X]=\sum_i (-1)^k x_i ^k \).
Then note that for any symmetric function \( f \), we have \( \omega(f(X))=f[-\epsilon X] \).
For a more comprehensive account, see \cite{Hag2008, Mac1995}.

\subsection{Combinatorics}\label{subsec: Combinatorics}

For a fixed $n$, an $n$-\emph{Dyck path} is a lattice path from $(0,0)$ to $(n,n)$ consisting of the up-steps $(0,1)$ and right-steps $(1,0)$ which stays above the main diagonal. There is a natural way to associate a Hessenberg function $h(\pi)$, a natural unit interval order $P(\pi)$, and a graph $G(\pi)$ for a Dyck path $\pi$. We outline those correspondences. 

Throughout this paper, we set $[n]=\{ 1,2,\dots, n\}$. A \emph{Hessenberg function} is a nondecreasing function $h:[ n] \rightarrow  [n]$ such that $h(i)\ge i$ for all $1\le i \le n$. For a $n$-Dyck path $\pi$, if we let $h(\pi)$ be a function whose value at $i$ is the $i$-th column height of $\pi$, then $h(\pi)$ is a Hessenberg function. Therefore, a Dyck path determines a Hessenberg function and vice versa.

A \emph{natural unit interval order} is a poset arising from an arrangement of unit intervals.
It is known by Wine and Freund \cite{WF1957} 
that unit interval orders are counted by Catalan numbers.  Furthermore, two different bijections to associate a unit interval order to a Dyck path are given by Skandera and Reed \cite{SR2003}, and Guay-Paquet, Morales and Rowland \cite{GMR2014}. So, we can represent natural unit interval orders by Dyck paths. 

Let us describe the correspondence. For a Dyck path $\pi$, $P=P(\pi)$ is a poset on $[n]$ whose order relation is defined by \( i<_{P(\pi)} j \) if the cell $(i,j)$ lies above $\pi$.
In this case, we also denote the order relation by $i<_\pi j$ for \( i<_{P({\bf m})} j \). In addition, we say $(i,j)$ is an \emph{attacking pair} of $\pi$, if $1\le i<j\le n$ and $i\nless_\pi j$. 

Note that a poset $P$ is called \emph{$(r + s)$-free} if $P$ does not contain an induced subposet isomorphic to the direct sum of an $r$-element chain and an $s$-element chain. 
It is well-known that a poset $P$ is a natural unit interval order 
if and only if it is $(2+2)$-free and $(3+1)$-free.

Given a poset $P$, the \emph{incomparability graph}  $\inc(P)$ is a graph whose vertex set is the elements of $P$
with edges connecting pairs of incomparable elements in $P$. Notice that for a natural unit interval order $P(\pi)$ associated with a Dyck path $\pi$, the corresponding incomparability graph is encoded in the cells between the Dyck path and the main diagonal. We denote this graph by $G(\pi)$. 

When referring to a Dyck path $\pi$, we will implicitly encompass the associated objects discussed above. In other words, depending on context $\pi$ may represent not only the Dyck path itself but also its corresponding Hessenberg function $h(\pi)$, the natural unit interval order $P(\pi)$, and the associated incomparability graph $G(\pi)$.

\begin{exam}\label{exa:1}
  Given a Dyck path \( \pi =\text{NNENNEENEE} \), where N denotes the up step and E denotes the east step, starting from the south left corner, the corresponding Hessenberg function \( h(\pi) \) has
  the function value \( (2, 4, 4, 5, 5) \), where \( i \) th entry \( h(i) \) is the height of the \( i \) th column of \( \pi \).

  The corresponding natural unit interval order \( P(\pi)  \) is the poset with the relations
  \[
    1<_\pi 3,\, 1<_\pi 4, \, 1<_\pi 5,\, 2<_\pi 5,\, 3<_\pi 5.
  \]
For the poset \( P(\pi) \), the incomparability graph \( G(\pi) \) is the graph with the vertex set \( \{ 1, 2, 3, 4, 5\}\) and the edge set  \( \{ \{ 2,1\}, \{3,2\}, \{ 4,2\}, \{4,3\}, \{ 5,4\}\} \).
See Figure \ref{fig:exofpi}.
\begin{figure}[!ht]
\begin{tikzpicture}[scale=.6]
 \foreach \i in {0,...,5} {
 \draw[color=gray] (0,\i)--(5,\i);
  \draw[color=gray] (\i,0)--(\i,5);
  }
  \foreach \i in {1,...,5}{
\node () at (-.5, 5-\i+.5) {$\i$};
\node () at (5-\i+.5, 5.5) {$\i$};
}
\draw [dashed] (0,0)--(5,5);
\draw [very thick] (0,0)--(0,2)--(1,2)--(1,4)--(3,4)--(3,5)--(5,5)--(5,0)--(0,0)--cycle;
\node () at (-2, 2.5) {\( \pi = \)};
\end{tikzpicture}\\
\begin{tikzpicture}[scale=1]
  \draw [thick](.2,.2)--(.85, .85)
  (1.15, 0.85)--(1.85, .15)
  (2.15,-.15)--(2.85, -.85)
  (3.15,-.85)--(3.85, -.15);
  \node () at (0,0) {\( \bf{2} \)};
  \node () at (1,1) {\( \bf{5} \)};
  \node () at (2,0) {\( \bf{3} \)};
  \node () at (3,-1) {\( \bf{1} \)};
  \node () at (4,0) {\( \bf{4} \)};
  \node () at (-1, 0) {\( P(\pi)\,: \)};
  \node () at (5.7, 0) {\( G(\pi)\,: \)};
  \filldraw (6.5,0) circle (2.5pt)
  (7.5,0) circle (2.5pt)
  (8.5,0) circle (2.5pt)
  (9.5,0) circle (2.5pt)
  (10.5,0) circle (2.5pt);
  \draw [thick] (6.5,0)--(10.5, 0);
  \draw [thick]  (7.5,0) to[out=70,in=110] (9.5,0);
  \node () at (6.5, -.3) {\( 1 \)};
  \node () at (7.5, -.3) {\( 2 \)};
  \node () at (8.5, -.3) {\( 3 \)};
  \node () at (9.5, -.3) {\( 4 \)};
  \node () at (10.5, -.3) {\( 5 \)};
  \end{tikzpicture}
\caption{A Dyck path \( \pi \) and corresponding \( P(\pi) \) and \( G(\pi) \).}\label{fig:exofpi}
\end{figure}
\end{exam}

\subsection{Chromatic symmetric functions and unicellular LLT polynomials}
\label{subsec:CSFandLLT}

For an $n$-Dyck path $\pi$ and a word $w\in \mathbb{Z}_{>0}^n$ of length $n$, we define the \emph{inversion} statistic as 
$\inv_{\pi} (w)= |\{ (i,j) ~:~ i<j,\, i\nless_{\pi} j,\,\text{ and } w_i > w_j\}|$.

The \emph{chromatic symmetric function} indexed by  $\pi$ is defined by 
\begin{equation}\label{def:chromsym}
  \chi_{\pi} [X;q]=\sum_{\substack{w\in \mathbb{Z}_{>0}^n\\ \text{proper}}}q^{\inv_{\pi}(w)}x^w,
\end{equation}
where the sum is over `proper' colorings of $G(\pi)$, that is, over words of length $n$ such that $w_i\ne w_j$ for $i\nless_{\pi} j$. We can think of the terms in this sum as placements of the words $w_1w_2\cdots w_n$ in the main diagonal cells with $w_i$ in the cell  $(i,i)$ in $\pi$ in Figure \ref{fig:exofpi}, where inversions correspond to cells $(i,j)$, with the number $w_i$ in the $i$th row from the top being larger than the number $w_j$ in the $j$th column from the right. We note that since $ \chi_{\pi} [X;q]$ is symmetric, we can reverse the variable set without changing the function.  Equivalently, if we define
$\asc_{\pi} (w)= |\{ (i,j) ~:~ i<j,\, i\nless_{\pi} j,\,\text{ and } w_i < w_j\}|$ for $w\in \mathbb{Z}_{>0}^n$ of length $n$, then we have 
\begin{equation}\label{def:chromsymasc}
  \chi_{\pi} [X;q]=\sum_{\substack{w \in \mathbb{Z}_{>0}^n\\ \text{proper}}}q^{\asc_{\pi}(w)}x^w.
\end{equation}
Note that the function $\chi_\pi[X;q]$ can be defined as a form of \eqref{def:chromsymasc} for any labeled graphs and the function is not in general symmetric but quasisymmetric. However, it is proved by Shareshian and Wachs \cite[Theorem 4.5]{ShWa16} that the chromatic quasisymmetric function is symmetric for incomparability graphs of natural unit interval orders. We will only consider the symmetric case in this paper, and take \eqref{def:chromsym} as the definition of $\chi _{\pi}$.   

In definition (\ref{def:chromsym}) for the chromatic symmetric function $\chi_{\pi}$, if we remove the condition for $w$ to be proper ($w_i\ne w_j$ for $i\nless_{\pi} j$), then it becomes the \emph{unicellular LLT polynomial} indexed by $\pi$, namely 
\begin{equation}\label{def:unicellularLLT}
\LLT_{\pi}[X;q]=\sum_{w\in \mathbb{Z}_{>0}^n}q^{\inv_{\pi}(w)}x^w.
\end{equation}
The original definition of the LLT polynomials given by Lascoux, Leclerc, and Thibon \cite{LLT97}
uses \emph{cospin} statistic defined on ribbon tableaux. Later, Bylund and Haiman found
an equivalent definition using \emph{dinv} statistic defined over \( k \)-tuple of semistandard
Young tableaux of skew shapes.
In the case when each skew shape consists of only one cell, the LLT polynomials are called unicellular.
In this case, the diagram of a tuple of single cells corresponds to a Dyck path \cite{Hag2008,CM2018} and the definition can be rewritten as \eqref{def:unicellularLLT}.

Carlsson and Mellit \cite{CM2018} utilized an explicit relationship between the unicellular LLT polynomials and the chromatic symmetric functions via a plethystic substitution.
\begin{prop} \label{prop: plethystic relation between X and LLT Carlsson Mellit} For an $n$-Dyck path $\pi$, we have
\begin{equation}\label{eqn:CMrelation}
    \chi_\pi[X;q] = \dfrac{\LLT_{\pi}[(q-1)X;q]}{(q-1)^n}.
\end{equation}
\end{prop}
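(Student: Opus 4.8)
The plan is to establish the identity \eqref{eqn:CMrelation} by comparing the two symmetric functions monomial-by-monomial after unpacking the plethystic substitution $\LLT_\pi[(q-1)X;q]$. The key structural observation is that both sides are indexed by the same combinatorial data—words $w \in \mathbb{Z}_{>0}^n$ weighted by $q^{\inv_\pi(w)}$—with the only difference being the properness condition. So the heart of the matter is to show that applying $f \mapsto f[(q-1)X]$ and dividing by $(q-1)^n$ enacts precisely the passage from the full sum \eqref{def:unicellularLLT} to the proper-coloring sum \eqref{def:chromsym}. First I would recall that $\LLT_\pi[X;q]$ is a symmetric function homogeneous of degree $n$, so it expands in the power sum basis, and the plethysm $(q-1)X$ acts on $p_\lambda$ by property (3) of the plethystic properties listed above: $p_k[(q-1)X] = (q^k - 1)p_k[X]$, hence $p_\lambda[(q-1)X] = \prod_i (q^{\lambda_i}-1)\, p_\lambda[X]$.

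The cleaner route, which I expect to be the main line of argument, is to work directly with the monomial/word expansion rather than the power-sum expansion. I would group the terms of $\LLT_\pi[X;q]$ according to which variables are \emph{repeated}: given any word $w$, consider the set partition of $[n]$ into color classes (positions sharing a common value). The substitution $X \mapsto (q-1)X$ combined with the superization viewpoint of \cite{HHLRU, HHL05} alluded to in the outline effectively assigns to each color class a factor that, upon dividing by $(q-1)^n$, collapses all colorings where some color class contains a comparable (non-edge) pair, leaving exactly the proper colorings with the correct $q$-power. Concretely, I would show that for each proper coloring the net coefficient after the substitution and division is $q^{\inv_\pi(w)}$, matching \eqref{def:chromsym}, while the contributions of improper colorings cancel. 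The bookkeeping device for this is to split each repeated color into two ``super'' colors (one barred, one unbarred) and track how inversions within a monochromatic comparable pair versus an incomparable pair behave differently under the $(q-1)$ weighting.

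The hard part will be controlling the cancellation among improper colorings and verifying that the $q$-statistic is preserved exactly, rather than merely up to an overall power of $q$. The subtlety is that $\inv_\pi$ counts inversions over attacking pairs regardless of properness, so when two positions $i \nless_\pi j$ receive equal colors in an improper LLT term, neither an inversion nor an ascent is recorded, and I must check that the $(q^{k}-1)$ factors introduced by the plethysm interact correctly with these ``tied'' attacking pairs. I would handle this by a sign-reversing involution on improper colorings paired with the expansion of each $(q-1)$ factor into $q$ and $-1$: for each monochromatic attacking pair one chooses a sign, and an involution toggling the minimal such pair cancels all terms except the genuinely proper ones. Carrying out this involution carefully—ensuring it is well-defined, sign-reversing, and fixed-point-free exactly on the improper set—is where the real work lies; once that is in place, the surviving terms reproduce \eqref{def:chromsym} and the factor $(q-1)^{-n}$ exactly absorbs the $n$ copies of $(q-1)$ distributed across the color classes, completing the proof.
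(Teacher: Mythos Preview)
The paper does not prove Proposition~\ref{prop: plethystic relation between X and LLT Carlsson Mellit}; it is stated as a result of Carlsson and Mellit \cite{CM2018} and used as a black box. So there is no ``paper's own proof'' to compare against in the strict sense.

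That said, your plan is sound and in fact aligns with machinery the paper deploys elsewhere. In Section~\ref{subsec: proof of Theorem A} the authors carry out exactly the superization computation you describe, obtaining (for a general parameter $t$)
\[
\LLT_\pi[(t-1)X;q] = \sum_{c\in\mathbb{Z}_{>0}^n} q^{\inv_\pi(c)}\prod_{i=1}^n \bigl(t-q^{N_\pi(i)}\bigr)\,x^c,
\]
where $N_\pi(i)$ counts attacking $j>i$ with $c_j=c_i$. Specializing $t=q$ makes every proper coloring contribute $(q-1)^n q^{\inv_\pi(c)}x^c$, while any improper $c$ has some $i$ with $N_\pi(i)=1$ (take $i$ maximal with $N_\pi(i)\ge 1$; a second attacking $j'>j>i$ with the same color would force $(j,j')$ attacking by the Hessenberg condition, contradicting maximality), so the product contains a factor $q-q=0$. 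This factorization argument is cleaner than the sign-reversing involution you propose and sidesteps the ``hard part'' you flag: there is no delicate cancellation to track, just a vanishing factor.

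Your involution approach would also work, but you would need to make it precise. The natural candidate---toggling the bar on the position of the minimal monochromatic attacking pair in a super-word---does reverse sign and preserve $\inv_\pi$ of the standardization, but verifying the latter requires the same unit-interval-order observation (that attacking pairs are ``transitive'' in the relevant sense) that drives the factorization proof. So the two routes are essentially equivalent in content; the factorization packaging is just more transparent.
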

By replacing \( (q - 1) \) in the numerator of \eqref{eqn:CMrelation} within the plethysm with \( (t - 1) \), and subsequently substituting \( t = q^\alpha \), we define the \( \alpha \)-chromatic symmetric functions as follows.
\begin{defn}\label{def:alpha_chromsym}
Given an \( n \)-Dyck path $\pi$ and $\alpha\in \mathbb{R}$, we define the \emph{$\alpha$-chromatic symmetric function} by
\begin{equation}\label{eqn:alpha_chromsymdef}
   \chi^{(\alpha)}_\pi[X;q] := \chi_\pi\left[Q_\alpha X;q\right] =\frac{ \LLT_{\pi}[(q^\alpha-1)X;q]}{(q-1)^n}.
\end{equation}
\end{defn}

\subsection{Superization}\label{subsec: superization}
To deal with the plethystic substitution used in \eqref{eqn:alpha_chromsymdef},
we review the standard technique of \emph{superization} following \cite{HHLRU, HHL05} and \cite[pp. 99-101]{Hag2008}.   Let 
$\mathcal A_{+} = \{1,2,\ldots ,n\}$ and $\mathcal A _{-} = \{\overline  1, \overline 2,\ldots ,\overline n\}$ denote positive and negative alphabets,  respectively.  Then for any fixed ordering of the 
alphabets the following holds. 

\begin{prop}\label{prop:superization}
    Let $f$ be a symmetric function of homogeneous degree $n$, written in terms of quasisymmetric functions as 
    \[
        f[X] = \sum_{\sigma \in \mathfrak{S}_n} c_{\sigma}  F_{n,\text{Des}(\sigma ^{-1})}[X],
    \]
    where the  $c_{\sigma}$ are constants independent of $X$.
    Then $\tilde{f}[X,Y]=\omega_Y f[X+Y]$ (called the {\it superization} of $f$) has the expansion 
    \[
        \tilde{f}[X,Y]= \sum_{\sigma \in \mathfrak{S}_n} c_\sigma \tilde{F}_{n,\text{Des}(\sigma ^{-1})}[X,Y].
    \]
    Here for $D \subseteq [n-1]$, $F_{n,D}[X]$ is Gessel's fundamental quasisymmetric function corresponding to $D$ and  ${\tilde F}_{n,D}[X,Y]$ is its superization, defined as
    \begin{align*}
  F_{n,D}[X] &= \sum _{1 \le a_1\le a_2 \le \cdots \le a_n \atop {a_i=a_{i+1} \implies i \notin D} } \prod_{i=1}^n x_{a_i}, \qquad  \text{and} \\
    {\tilde F}_{n,D}[X,Y] &= \sum _{1\le a_1\le a_2 \le \cdots \le a_n \atop {a_i=a_{i+1} \in \mathcal A_{+}  \implies i \notin D  \atop {                                                                              
    a_i=a_{i+1} \in \mathcal A_{-}  \implies i \in D }}} 
    \prod_{i: a_i \in \mathcal A_{+} } x_{a_i}      \prod_{i: a_i \in \mathcal A_{-} } y_{|a_i|},
     \end{align*}
     with $|\overline i|=i$ for $1\le i \le n$.
\end{prop}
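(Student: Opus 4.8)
The plan is to prove the statement by linearity, reducing it to the single quasisymmetric identity
\begin{equation*}
\omega_Y\bigl(F_{n,D}[X+Y]\bigr)=\tilde F_{n,D}[X,Y],\qquad D\subseteq[n-1].\tag{$\ast$}
\end{equation*}
Substituting the combined alphabet $\{x_i\}\cup\{y_j\}$ into the hypothesis $f[X]=\sum_{\sigma}c_\sigma F_{n,\operatorname{Des}(\sigma^{-1})}[X]$ gives $f[X+Y]=\sum_\sigma c_\sigma F_{n,\operatorname{Des}(\sigma^{-1})}[X+Y]$ as an identity of power series, and $\omega_Y$ is linear; so once $(\ast)$ is established, summing against the $c_\sigma$ yields the proposition. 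The one point needing care at the outset is that each $F_{n,D}[X+Y]$ is only quasisymmetric (not symmetric) in $Y$, so I would first record that the symmetric-function operator $\omega_Y$ agrees, on the symmetric function $f[X+Y]$, with the canonical involution of $\mathrm{QSym}$ in the $Y$-alphabet (the one sending $F_{m,E}\mapsto F_{m,[m-1]\setminus E}$, which restricts to $\omega$ on symmetric functions), and that this involution may be applied termwise.

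The tool for $(\ast)$ is the coproduct of the fundamental quasisymmetric functions,
\begin{equation*}
F_{n,D}[X+Y]=\sum_{k=0}^{n}F_{k,\,D\cap[k-1]}[X]\,F_{n-k,\,(D-k)\cap[n-k-1]}[Y],
\end{equation*}
obtained by splitting a weakly increasing word $a_1\le\cdots\le a_n$ into its block of $X$-letters $a_1\le\cdots\le a_k$ and its block of $Y$-letters $a_{k+1}\le\cdots\le a_n$, where $D-k=\{d-k:d\in D\}$. Applying $\omega_Y$ termwise and invoking $\omega F_{m,E}=F_{m,[m-1]\setminus E}$ in the $Y$-variables replaces the descent set of each $Y$-factor by its complement, giving
\begin{equation*}
\omega_Y F_{n-k,\,(D-k)\cap[n-k-1]}[Y]=F_{n-k,\,[n-k-1]\setminus((D-k)\cap[n-k-1])}[Y].
\end{equation*}

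It then remains to recognize the resulting sum as the monomial expansion of $\tilde F_{n,D}[X,Y]$. I would verify this by ordering the super alphabet so that every positive letter precedes every negative letter. With this order a weakly increasing super-word $a_1\le\cdots\le a_n$ consists of a positive prefix followed by a negative suffix, splitting at some index $k$; the prefix contributes $F_{k,\,D\cap[k-1]}[X]$ with the defining condition ``$a_i=a_{i+1}\in\mathcal A_{+}\Rightarrow i\notin D$'', while in the suffix the condition ``$a_i=a_{i+1}\in\mathcal A_{-}\Rightarrow i\in D$'' forbids an equality exactly at the reindexed positions lying outside $D$, which is precisely the complemented descent set $[n-k-1]\setminus((D-k)\cap[n-k-1])$ above. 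The split index $k$ itself carries no constraint, since a positive letter is strictly below a negative one; matching the two sums term by term over $k$ then establishes $(\ast)$ for this order. Finally, because the left-hand side $\omega_Y f[X+Y]$ involves no choice of ordering and $\tilde F_{n,D}[X,Y]$ is independent of the total order on $\mathcal A_{+}\cup\mathcal A_{-}$ (the order-independence of superization from \cite{HHL05,HHLRU}), the identity holds for any fixed ordering.

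The hard part will be the bookkeeping in the last step: correctly tracking how complementation of the $Y$-descent set flips the equality condition on consecutive negative letters, and confirming that the split point contributes no spurious constraint. Conceptually, the subtle point is justifying the termwise application of $\omega_Y$ to the non-symmetric $Y$-factors, which is exactly what forces the passage through the $\mathrm{QSym}$ coproduct rather than a naive appeal to the symmetric $\omega$. If one wishes the ``any ordering'' clause to be self-contained rather than cited, the matching must be redone with an interleaving (shuffle) decomposition of the super-word, which is the only genuinely more technical variant.
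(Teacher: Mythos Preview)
The paper does not supply its own proof of this proposition; it is quoted as background from \cite{HHLRU,HHL05} and \cite[pp.~99--101]{Hag2008}. Your argument is correct and is essentially the standard one: reduce by linearity to the single identity $(\ast)$, expand $F_{n,D}[X+Y]$ via the $\mathrm{QSym}$ coproduct using the order in which every positive letter precedes every negative one, apply the $\mathrm{QSym}$ involution $F_{m,E}\mapsto F_{m,[m-1]\setminus E}$ in the $Y$-alphabet, and match the complemented descent condition on the negative suffix with the clause ``$a_i=a_{i+1}\in\mathcal A_{-}\Rightarrow i\in D$''. You correctly identify the only genuinely subtle step, namely that $\omega_Y$ must be interpreted as the $\mathrm{QSym}$ involution before it can be applied termwise to the (non-symmetric) $Y$-factors, and that this extension restricts to the usual $\omega$ on $\Lambda$. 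The bookkeeping you describe for the split point and the reindexed descent set is accurate.
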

\begin{defn}. Given a word $w \in \mathbb Z _{>0}^n$, we let $\text{stan}(w)$ denote the {\it standardization} of $w$, which is the unique permutation $\beta \in \mathfrak{S}_n$
satisfying that if $w_i<w_j$ then $\beta _i <\beta _j$, and if $w_i=w_j$ for $i<j$, then $\beta _i < \beta _j$.  For example, $\text{stan}(323662) = 314562$.  More generally, let \( \mathcal{A}_{\pm} = \mathcal{A}_{+} \cup \mathcal{A}_{-} \). Given a \emph{superword} \( \tilde{w} \in \mathcal{A}_{\pm}^n \), we define \( \text{stan}(\tilde{w}) \) to be the permutation \( \beta \), as defined above, with the additional convention that if \( \tilde{w}_i = \tilde{w}_j \in \mathcal{A}_{-} \) and \( i < j \), then \( \beta_i > \beta_j \).
So, equal negative letters standardize in the opposite way of equal positive letters.  For example, assuming the
ordering $1 < \overline 1 < 2 <  \overline 2 <\cdots <n < \overline n$, $\text{stan}(2 \, \overline 2  \, \overline 2 \,  \overline 1 \,  \overline 1 \, 1  \, 1 \, 2 \, \overline 1)  = 6985 41273$.
\end{defn}

Let $X_n = \{x_1,\ldots ,x_n\}$ and $Y_n=\{y_1,\ldots ,y_n\}$ be two finite sets of variables.  It is a fairly straightforward exercise to show that for any $\sigma \in \mathfrak{S}_n$,
\begin{align}
\label{Standard}
F_{n,\text{Des}(\sigma ^{-1})} [X_n] &= \sum_{w \in \mathcal A_{+}^n \atop \text{stan}(w) = \sigma}  x^{w}, \qquad \text{and} \\\label{Standard2}
 \tilde F_{n,\text{Des}(\sigma ^{-1})}[X_n,Y_n] &= \sum_ {\tilde w \in \mathcal A_{\pm}^n \atop \text{stan}(\tilde w) = \sigma } 
 \prod_{i: w_i \in \mathcal A_{+}} x_{\tilde w_i}
 \prod_{i: \tilde w_i \in \mathcal A_{-}} y_{|\tilde w_i|}.
\end{align}

Let $G[X;q]$ be a symmetric function of homogeneous degree $n$,  depending on a parameter $q$, satisfying
 \begin{align*}
G[X;q] = \sum_ { w \in \mathcal A_{+} ^n }
 q^ { \stat(w) }  x^{w}.
 \end{align*}
Proposition \ref{prop:superization}, \eqref{Standard}, and \eqref{Standard2} imply that if 
 $\stat$ is invariant under standardization, then
 \begin{align}
 \label{TStandard}
 \omega ^Y G[X+Y; q] = \sum _{\tilde w \in \mathcal A_{\pm} ^n} q^{ \stat(\tilde w)}
 \prod_{i: \tilde w _i \in \mathcal A_{+} } x_{a_i}      \prod_{i: \tilde w _i \in \mathcal A_{-} } y_{|a_i|}.
\end{align}
Here $\omega ^Y$ sends $s_{\lambda}(Y)$ to $s_{\lambda ^{\prime}}(Y)$, and fixes the $X$ variables.
Note that since \( \omega s_\lambda(Y)=s_\lambda[-\epsilon Y] \), we can denote the left hand side of
\eqref{TStandard} as 
\begin{equation}\label{eqn:superization}
\omega ^Y G[X+Y; q]=G[X-\epsilon Y;q]. 
\end{equation}

Given an $n$-Dyck path $\pi$, for $1\le i \le n$, let  $b_i(\pi)$ and $a_i(\pi)$ denote the number of cells below $\pi$ and strictly above the diagonal $y=x$, 
in the $i$th column (row, respectively), from the right (top, respectively).  For example, for the path on the left in Figure \ref{XYfig} of Section~\ref{Sec:XYTechnique}, we have
$(b_1,\ldots ,b_n)= (0,1,1,0,1,2,1,1,2)$ and $(a_1,\ldots ,a_n)= (1,1,0,2,1,1,2,1,0)$.  Also, let $\area(\pi) = \sum_i a_i = \sum_i b_i$.
\begin{prop}\label{prop: q-chromatic}
\label{RProd}
For an $n$-Dyck path $\pi$ and a real parameter $\alpha$,
\begin{align}
\label{QQ}
\chi _{\pi} [Q_{\alpha};q]  = q^{\area(\pi)}\prod_{i=1}^n [\alpha -a_i(\pi)]_q = q^{\area(\pi)}\prod_{i=1}^n [\alpha -b_i(\pi)]_q.
\end{align}
\end{prop}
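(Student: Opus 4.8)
The plan is to establish the identity first for $\alpha\in\mathbb N$ by directly evaluating the coloring sum, and then extend it to all real $\alpha$ by polynomiality. For $\alpha\in\mathbb N$ we have $Q_\alpha=1+q+\cdots+q^{\alpha-1}$, so the plethystic substitution $X\mapsto Q_\alpha$ is the monomial specialization $x_j=q^{\,j-1}$ for $1\le j\le\alpha$ (and $x_j=0$ otherwise); feeding this into \eqref{def:chromsym} gives
\[
\chi_\pi[Q_\alpha;q]=\sum_{\substack{w\in[\alpha]^n\\ \text{proper}}} q^{\,\inv_\pi(w)+\sum_i(w_i-1)}.
\]
Both sides of \eqref{QQ} are polynomials in $t=q^\alpha$ of degree at most $n$ (the right-hand side visibly, and the left-hand side because $\chi_\pi[Q_\alpha;q]=\LLT_\pi[(t-1);q]/(q-1)^n$ by \eqref{eqn:alpha_chromsymdef}, with each $p_k[t-1]=t^k-1$). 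Hence it suffices to prove equality for every positive integer $\alpha$, after which interpolation in $t$ forces it for all real $\alpha$.

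The engine of the computation is the one-variable identity: for any subset $S\subseteq[\alpha]$,
\[
\sum_{w\in[\alpha]\setminus S} q^{\,(w-1)+|\{s\in S\,:\,s>w\}|}=q^{|S|}+q^{|S|+1}+\cdots+q^{\alpha-1}=q^{|S|}[\alpha-|S|]_q .
\]
To prove this I would list the available colors $c_1<\cdots<c_{\alpha-|S|}$ in increasing order; since exactly $c_m-m$ elements of $S$ are smaller than $c_m$, the exponent attached to $c_m$ collapses to $(c_m-1)+(|S|-(c_m-m))=|S|-1+m$, so the exponents run through $|S|,|S|+1,\dots,\alpha-1$. The decisive feature is that the sum depends on $S$ only through its cardinality.

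Next I would color the vertices greedily in increasing order $1,2,\dots,n$, reading off $\inv_\pi(w)$ one vertex at a time: placing $w_j$ creates exactly the inversions $(i,j)$ with $i<j$, $\{i,j\}$ an edge, and $w_i>w_j$. Because $h(\pi)$ is nondecreasing, the already-colored neighbors of $j$ are the contiguous block $\{\,i<j:h(i)\ge j\,\}$, which is a clique and hence carries $d_j$ distinct colors $S_j$, where $d_j$ is the number of area cells in the $j$th row from the bottom. After distributing $\sum_i(w_i-1)$ across the vertices, the summand factors as $\prod_j q^{(w_j-1)+|\{s\in S_j\,:\,s>w_j\}|}$; summing the variables in the order $w_n,w_{n-1},\dots,w_1$, no surviving factor depends on the variable being summed, and the inner sum over each $w_j$ equals $q^{d_j}[\alpha-d_j]_q$ by the identity above, independently of the earlier colors. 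Thus the sum telescopes into
\[
\chi_\pi[Q_\alpha;q]=\prod_{j=1}^n q^{d_j}[\alpha-d_j]_q=q^{\area(\pi)}\prod_{j=1}^n[\alpha-d_j]_q,
\]
and since $\{d_j\}_j=\{a_i(\pi)\}_i$ as multisets (row counts from the bottom versus from the top), this is the first equality in \eqref{QQ}. Running the identical argument with $\asc_\pi$ in place of $\inv_\pi$ and coloring in the reverse order $n,n-1,\dots,1$, so that the relevant neighbors of $c$ form the later block $\{c+1,\dots,h(c)\}$ of size $h(c)-c$, yields $q^{\area(\pi)}\prod_c[\alpha-(h(c)-c)]_q=q^{\area(\pi)}\prod_i[\alpha-b_i(\pi)]_q$, the second equality.

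I expect the only delicate point to be the bookkeeping in the greedy step: one must check simultaneously that the earlier (resp.\ later) neighbors of each vertex form a clique, so that the per-vertex identity applies with $|S_j|=d_j$, and that under the chosen summation order no remaining factor involves the variable currently being summed, so that the product genuinely separates. Both facts rest on the monotonicity of the Hessenberg function, i.e.\ on $\pi$ being a Dyck path. The reduction to integer $\alpha$ and the final interpolation in $t=q^\alpha$ are routine.
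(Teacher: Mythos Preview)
Your proof is correct and takes a genuinely different route from the paper's. The paper goes through the Carlsson--Mellit relation \eqref{eqn:CMrelation} and \emph{superization}: it writes $(q-1)^n\chi_\pi[Q_\alpha;q]=\LLT_\pi[q^\alpha-1;q]$, realizes the plethystic substitution $X\mapsto q^\alpha-\epsilon\cdot\epsilon$ as a signed sum over superwords in $\{1,\bar 1\}^n$, and then peels off the factor $(q^\alpha-q^{a_i(\pi)})$ vertex by vertex; the second equality is then deduced from the invariance of $\chi_\pi$ under reversal of $\pi$. You instead stay entirely inside the coloring definition \eqref{def:chromsym}: for $\alpha\in\mathbb N$ the substitution $X\mapsto Q_\alpha$ is a principal specialization, and your one-variable lemma lets the coloring sum telescope into the product; you then pass to arbitrary $\alpha$ by noting both sides are polynomials in $t=q^\alpha$. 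Your argument is more elementary and self-contained (no LLT, no superization machinery), and it yields both product forms directly via the $\inv$/$\asc$ duality rather than via path reversal. The paper's approach, on the other hand, works uniformly for all real $\alpha$ without the interpolation step and, more importantly for their purposes, introduces the superization formalism that they immediately reuse to derive \eqref{Eq: LLT superization1} in the proof of Theorem~\ref{thm: alpha+k choose n basis monomial expansion}. One small clean-up: your telescoping needs each clique of earlier neighbors to have size at most $\alpha$, which is automatic once $\alpha\ge n$; since you only need agreement at $n+1$ integer values of $\alpha$ for the interpolation, this is harmless, but it would be worth saying explicitly.
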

\begin{proof}
  By \eqref{eqn:CMrelation}, we have $(q-1)^n \chi_{\pi}[Q_\alpha;q] = \LLT_{\pi} [q^{\alpha}-1;q]$. To deal with the plethystic substitution of $q^{\alpha}-1$ into $\LLT_{\pi}$,
  we use the \eqref{eqn:superization}.  Fix the ordering $1 < \overline 1 < 2 <  \overline 2 <\cdots <n < \overline n$  in 
  $\mathcal A_{\pm}$, and for a superword
  $\tilde w \in \mathcal A _{\pm}^n$, let \( \inv_\pi (\tilde{w})=\inv_\pi (\text{stan}(\tilde{w})) \).

By using the definition of \( \LLT_{\pi} [X;q] \) in (\ref{def:unicellularLLT}), since $\inv$ is invariant under standardization,  
 we try to express $\LLT_{\pi}[q^{\alpha}-1;q]$ as $\LLT_{\pi}[X-\epsilon Y;q]$ for some choice of $X,Y$ so that we can write  $\LLT_{\pi}[q^{\alpha}-1;q]$
as a weighted sum over superwords. That is, we want to 
choose $X,Y$ so that $X-\epsilon Y = q^{\alpha}-1$.  Since $\epsilon ^2 =1$, one way to do this is to let $X=q^{\alpha}$ (the set of variables with one element with 
weight $q^{\alpha}$), and $Y= \epsilon$ (the set of variables with just one element of weight $-1$).  With this choice, by (\ref{TStandard}) we have
\begin{align}
\label{vector}
(q-1)^n \chi_{\pi}^{(\alpha)}[1;q]  = \sum_{\tilde w \in \{1,\bar{1}\}^n}   q^{\inv_{\pi}(\tilde{w})} q^{\alpha \text{pos}(\tilde{w})}(-1)^{\text{neg}(\tilde{w})},
\end{align}
where the sum is over all superwords $\tilde{w}$ of length $n$ 
consisting of $1$'s and $\overline 1$'s, with $\text{pos}(\tilde{w})$ denoting the number of $1$'s and $\text{neg}(\tilde{w})$ the number of $\overline 1$'s.  

Note that two adjacent vertices in $G_{\pi}$ which are both colored by $\overline 1$'s create an inversion pair since the $\overline 1$ in the higher row standardizes to a larger integer than the one in the lower row.   Whereas, two $1$'s which color adjacent vertices in $G_{\pi}$ never create an inversion pair.  Furthermore, a pair of adjacent vertices $b,c$, with $b$ colored $1$ and $c$ colored $\overline 1$, create an inversion pair if and only if $b>c$.  A moments thought shows that any row containing a $\overline 1$ contributes inversion pairs for each edge of $G_{\pi}$ in its row, while rows with a $1$ do not create any inversion pairs. 
So, given a coloring of vertices $2,3,\ldots ,n$ by $1$'s and $\overline 1$'s,
the cases of coloring vertex $1$ with $1$ and  $\overline 1$ would together contribute 
the weight  $(q^{\alpha} -q^{a_i(\pi)})$ to the right hand side of \eqref{vector}.
Hence by induction we have
\begin{equation}\label{secondstep}
(q-1)^n \chi _{\pi}^{(\alpha)}[1;q]  = \prod_{i=1}^n (q^{\alpha} -q^{a_i (\pi)}),
\end{equation}
and note that 
\begin{align*}
\prod_{i=1}^n (q^{\alpha} -q^{a_i (\pi)}) = q^{\area(\pi)} \prod_{i=1}^n (q^{\alpha-a_i} -1).
\end{align*}
Using this in (\ref{secondstep}) and dividing both sides by $(q-1)^n$ completes the proof of the first equality in \eqref{QQ}.

To prove the second inequality, use the well-known fact that $\chi _{\pi}[X;q]$ is invariant under reversing $\pi$, which replaces row lengths $a_i(\pi)$ by column lengths $b_i(\pi)$. We omit the details.
\end{proof}

\begin{remark}\label{rem:2}
Recently, Hikita \cite{Hikita} announced a proof of the Stanley-Stembridge conjecture.  His approach uses a probabilistic argument motivated from the observation that 
\begin{equation}\label{eqn:sumisone}
\sum_{\lambda\vdash n}q^{|\mathsf{e}|-|\mathsf{e}_{\lambda}|}\frac{c_{\lambda, \pi}(q)}{\prod_i [\lambda_i]_q !}=1,
\end{equation}
where $\chi_{\pi}[X;q]=\sum_{\lambda\vdash n}c_{\lambda,\pi}(q) e_{\lambda}[X]$ is the $e$-expansion of the chromatic quasisymmetric function indexed by a Dyck path $\pi$,
$|\mathsf{e}|={n \choose 2} -\text{area}(\pi)$ is the number of cells above $\pi$, and $|\mathsf{e}_\lambda|=\sum_{i<j}\lambda_i \lambda_j$.
We can easily derive \eqref{eqn:sumisone} from \eqref{QQ}, by applying the principal specialization of the elementary symmetric functions and  by letting $\alpha\rightarrow \infty$.
\end{remark}

\subsection{Rook theory}
For a nonnegative integer sequence $c_1,\dots,c_n$, we associate a \emph{board} $B(c_1,\dots,c_n)$ which is the collection of all cells $(i,j)$ such that $1\le i \le c_i$ and $1\le j \le n$. If the sequence satisfies $0\le c_1\le c_2 \le \cdots \le c_n\le n$, then $B=B(c_1,\dots,c_n)$ is called a \emph{Ferrers board}. 
We say that we \emph{place $k$ nonattacking rooks on $B$} for choosing a $k$-subset of cells in $B$ such that 
no two elements have a common coordinate, that is, no two rooks lie in the same row or in the same column. 
We identify a permutation $\sigma\in\mathfrak{S}_n$ with the  rook placement 
$\{(i,\sigma_i)\,|\,i=1,2,\dots,n\}$ on $[n]\times [n]$. 

For a Ferrers board $B=B(c_1,\dots,c_n)$, let $H_k(B)$ denote the set of $n$-rook placements on $[n]\times [n]$ with $k$ rooks in $B$, i.e.,
\[
    H_k(B)= \{\sigma \in \mathfrak{S}_n: |\sigma \cap B| = k\}
\]
and $h_k(B)$ be the the $k$-th \emph{hit number} defined by $h_k(B)=|H_k(B)|$. Let $R_k(B)$ denote the set of 
all $k$-rook placements in $B$
and $r_k(B)$ be the $k$-th \emph{rook number} defined by $r_k(B)=|R_k(B)|$. The famous Goldman--Joichi--White formula is a factorization formula for the polynomials associated to the rook numbers.
\begin{thm}\label{thm: Garsia-Remmel}\cite{GJW75} Let $B=B(c_1,\dots,c_n)$ be a Ferrers board in $[n]\times [n]$. Then 
\begin{equation}\label{eq: GJW}
    \prod_{i=1}^n (\alpha+c_i-(i-1))=\sum_{k=0}^n r_{n-k}(B) (\alpha)^{\underline{k}}
   \end{equation}
\end{thm}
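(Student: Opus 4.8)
The plan is to prove this as an identity of polynomials in $\alpha$. Both sides of \eqref{eq: GJW} are polynomials in $\alpha$ of degree $n$: the left-hand side is visibly monic of degree $n$, and on the right only the $k=n$ term $r_0(B)(\alpha)^{\underline n}=(\alpha)^{\underline n}$ has degree $n$, so the right-hand side is monic of degree $n$ as well. Hence it suffices to verify \eqref{eq: GJW} at infinitely many values of $\alpha$, and I would check it at every sufficiently large nonnegative integer $\alpha=m$ by a double-counting argument, after which the polynomial identity follows automatically.

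Fix such an $m$ and augment $B$ to a board $B_m\subseteq[n]\times[n+m]$ by appending $m$ complete rows below $B$; thus the $i$th column of $B_m$ has height $c_i+m$, and the appended part is a full $n\times m$ rectangle whose rows are disjoint from the rows of $B$. I will count the placements of $n$ nonattacking rooks on $B_m$ that put exactly one rook in each of the $n$ columns (a ``full'' placement) in two ways. For the first count, process the columns in the order $i=1,2,\dots,n$. Because $c_1\le c_2\le\cdots\le c_n$, any rook already placed in a column $j<i$ occupies a row of index at most $c_j+m\le c_i+m$, so all $i-1$ previously used rows lie inside column $i$; therefore column $i$ offers exactly $(c_i+m)-(i-1)$ admissible cells. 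Multiplying over $i$ gives
\[
  \#\{\text{full placements on }B_m\}=\prod_{i=1}^n\bigl(m+c_i-(i-1)\bigr),
\]
the left-hand side of \eqref{eq: GJW} at $\alpha=m$.

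For the second count, classify a full placement by the number $n-k$ of rooks that land in the original board $B$, so that the remaining $k$ rooks lie in the appended rectangle. The rooks in $B$ form a nonattacking placement counted by $r_{n-k}(B)$, while the other $k$ rooks occupy the $k$ columns left free, each in one of the $m$ bottom rows and in distinct rows; since the bottom block is full and its rows are disjoint from those used inside $B$, there are $m(m-1)\cdots(m-k+1)=(m)^{\underline k}$ choices. Summing over $k$ yields $\sum_{k=0}^n r_{n-k}(B)(m)^{\underline k}$, the right-hand side at $\alpha=m$. Equating the two counts proves \eqref{eq: GJW} for all large $m$, and hence as polynomials in $\alpha$.

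The crux of the argument is the first count: it is precisely the Ferrers property $c_1\le\cdots\le c_n$ that guarantees each previously occupied row survives into the current column, so that the number of free cells in column $i$ is $(c_i+m)-(i-1)$ independently of the earlier choices, producing the clean telescoping product. Without monotone column heights a taller column's rook could sit above a later shorter column, this independence fails, and the left-hand side no longer factors; so ordering the columns by increasing height and verifying that all earlier rows remain available is the step I would treat most carefully.
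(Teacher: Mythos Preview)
Your proof is correct; it is essentially the classical double-counting argument of Goldman, Joichi, and White. The paper does not prove this theorem at all: it is stated with a citation to \cite{GJW75} as background and used as a tool elsewhere, so there is no proof in the paper to compare against.
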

The right-hand-side of (\ref{eq: GJW}) can also be expressed as $\sum_{k=0}^n \binom{\alpha +k}{n}h_k(B)$.
We refer the product on the left-hand side of \eqref{eq: GJW} as a \emph{rook product}. Generally, a product of the form
\( \prod_{i=1}^n (\alpha + b_i)\)
is called a rook product when the nonincreasing rearrangement $\beta_1,\dots,\beta_n$ of $b_1,\dots,b_n$ satisfies the condition $\beta_i - 1 \le \beta_{i+1}$ for $1\le i \le n-1$,
and $0\le \beta_1\le n$.

Garsia--Remmel introduced  a $q$-analogue of the Goldman--Joichi--White formula \eqref{eq: GJW}. To elaborate, let us define $q$-statistics first. Given a Ferrers board $B$ and
a $k$-rook placement $C$, a placement of $k$ nonattacking rooks on $B$ (see Figure \ref{Ferrers} for an example), let $\inv_B(C)$ denote the number of cells of $B$  which are left after cancelling
all cells which are either weakly to the right in the row, or below in the column, of any rooks in $C$. Then we define 
\begin{align*}
r_k(B;q) =\sum_{C \in R_k(B)} q^{\inv_B(C)}.
\end{align*}
\begin{thm}\label{thm:GR}\cite{GR86} Let $B=B(c_1,\dots,c_n)$ be a Ferrers board in $[n]\times [n]$. Then 
\begin{equation*}
\label{eq: qGJW}
   \prod_{i=1}^n [\alpha+c_i-(i-1)]_q=\sum_{k=0}^n r_{n-k}(B;q) [\alpha]^{\underline{k}}_q
\end{equation*}
\end{thm}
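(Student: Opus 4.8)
The plan is to prove the identity by a $q$-refinement of the Goldman--Joichi--White double counting behind Theorem~\ref{thm: Garsia-Remmel}: I would enlarge $B$ by a full rectangular strip and compute one inversion-weighted sum over rook placements on the enlarged board in two different ways, matching the two sides. First I reduce to the case $\alpha\in\NN$. Writing $z=q^{\alpha}$, each factor $[\alpha+c_i-(i-1)]_q=(zq^{\,c_i-(i-1)}-1)/(q-1)$ on the left is affine in $z$, and each $[\alpha-j]_q=(zq^{-j}-1)/(q-1)$ is affine in $z$, so $[\alpha]_q^{\underline{k}}$ has $z$-degree $k$; hence both sides are polynomials in $z$ over $\QQ(q)$ of degree at most $n$. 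Since $q$ is transcendental, $\{q^{\alpha}:\alpha\in\NN\}$ is infinite, and it suffices to establish the identity for every nonnegative integer $\alpha$.

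Fix such an $\alpha$ and let $B^{+}$ be the board obtained from $B=B(c_1,\dots,c_n)$ by adjoining $\alpha$ full rows below it, so that the $i$th column of $B^{+}$ is a bottom-justified segment of $\alpha+c_i$ cells. Let $\inv_{B^{+}}$ denote the same cancellation statistic used to define $r_k(\,\cdot\,;q)$, now read off $B^{+}$, and consider placements $C$ of $n$ nonattacking rooks with one rook in each column, weighted by $q^{\inv_{B^{+}}(C)}$. I claim both sides of the theorem equal $\sum_{C}q^{\inv_{B^{+}}(C)}$. For the product side I would process columns from left to right: because the columns are bottom-justified with nondecreasing heights, the $i-1$ previously placed rooks occupy $i-1$ rows all contained in column $i$, rooks in later columns never cancel cells of column $i$, and a rook placed in the $s$th highest still-available cell of column $i$ leaves exactly $s-1$ uncancelled cells in that column. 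Thus column $i$ contributes the independent factor $\sum_{s}q^{s-1}=[\alpha+c_i-(i-1)]_q$, and multiplying over $i$ gives $\prod_{i=1}^{n}[\alpha+c_i-(i-1)]_q$.

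For the falling-factorial side I would group placements by the number $k$ of rooks landing in the adjoined strip; the other $n-k$ rooks then form an $(n-k)$-rook placement $C'$ of $B$. The crux is that the statistic splits as $\inv_{B^{+}}(C)=\inv_{B}(C')+\#\{\text{uncancelled strip cells}\}$. Indeed a strip rook cancels only strip cells, since both its row and the cells below it lie entirely in the strip, so board cells are cancelled exactly as they are in $B$; conversely each board rook cancels the whole strip beneath it, so strip cells survive only in the $k$ columns free of board rooks. Summing $q^{\#\{\text{uncancelled strip cells}\}}$ over placements of $k$ nonattacking rooks in those columns of the full $\alpha$-row strip yields the $q$-falling factorial $[\alpha]_q^{\underline{k}}$, while summing $q^{\inv_{B}(C')}$ over $C'$ yields $r_{n-k}(B;q)$; hence $\sum_{C}q^{\inv_{B^{+}}(C)}=\sum_{k=0}^{n}r_{n-k}(B;q)\,[\alpha]_q^{\underline{k}}$, and comparison with the product side completes the argument.

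The step I expect to be the main obstacle is exactly this factorization of $\inv_{B^{+}}$ across the board--strip boundary: one must check that the cancellation is strictly one-directional (board rooks annihilate the strips beneath them, while strip rooks never reach the board) so that the board contribution is genuinely $\inv_{B}(C')$ and the surviving strip cells are confined to the rook-free columns, together with the separate verification that the weighted count of $k$ nonattacking rooks in a full $\alpha$-row rectangle is exactly $[\alpha]_q^{\underline{k}}$. Aligning the cancellation conventions (weakly-right-in-row versus below-in-column, and whether a rook cancels its own cell) so that both the left-to-right product factorization and this board--strip split hold simultaneously for the single statistic $\inv_{B^{+}}$ is where the care is required.
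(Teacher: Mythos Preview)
The paper does not prove this theorem; it is stated as a cited background result from \cite{GR86} with no proof supplied. Your argument is correct and is essentially the original Garsia--Remmel double-counting proof: the polynomial reduction to $\alpha\in\NN$, the extension of $B$ by an $\alpha$-row strip, the left-to-right column factorization giving $\prod_i[\alpha+c_i-(i-1)]_q$, and the board/strip splitting of $\inv_{B^{+}}$ giving $\sum_k r_{n-k}(B;q)[\alpha]_q^{\underline{k}}$ are exactly the steps in \cite{GR86}. The cancellation conventions you flag as the delicate point do line up with the paper's definition (weakly right in the row, strictly below in the column): strip rooks have both their row and their below-column contained in the strip, board rooks wipe out the entire strip beneath them via the column rule, and the surviving strip cells live only in the $k$ rook-free columns, where the column-by-column count on the $\alpha\times k$ rectangle gives $\prod_{m=1}^{k}[\alpha-(m-1)]_q=[\alpha]_q^{\underline{k}}$.
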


Garsia and Remmel also defined $q$-hit numbers $h_k(B;q)$ algebraically in terms of the $r_k(B;q)$
\[
    \sum_{k=0}^n [k]_q! r_{n-k}(B;q)\prod_{i=k+1}^n(x-q^i) = \sum_{j=0}^n h_j(B;q)x^j. 
\]
They also posed the problem of finding a combinatorial statistic $\stat$ on rook placements $C$ satisfying
\begin{align*}
\label{qhitstat}
h_k(B;q) = \sum_{ C \in H_k(B)}  q^{\stat_B(C)}.
\end{align*}
Dworkin and the first author independently solved this problem  \cite{Dwo98}, \cite{Hag98a}.  Their solutions are closely related to each other.  Here is one way to describe the solution of the first author.  Given a placement of $n$ rooks on $[n]\times [n]$, first cancel each cell in the row to the right of any rook.  Then for each rook on
$B$, count the number of uncancelled cells above it in the column, but still on the board $B$.  Next, for each rook not on $B$ count the number of uncancelled cells as you travel upwards from the rook to the top of  $[n]\times [n]$, then wrap around to the bottom of the column (as on a torus), and continue travelling upwards to the top of the column in $B$.  The statistic $\stat(C)$ is the total number of uncancelled cells counted in this procedure.
For example, for the placement of $n$ rooks on $[n]\times [n]$ on the right in Figure \ref{Ferrers}, $\stat_B(C)=10$ (represented by circles).  As noted by Dworkin and the first author, 
for any Ferrers board $B(c_1,\ldots ,c_n)$ we have
\[
 \prod_{i=1}^n [\alpha+c_i-(i-1)]_q=\sum_{k=0}^n h_k(B;q) {\footnotesize \begin{bmatrix}\alpha+k\\n\end{bmatrix}_q}.
\]

\begin{figure}[!ht]
\begin{tikzpicture}[scale=.54]
 \foreach \i in {0,...,6} {
 \draw[color=gray] (0,\i)--(6,\i);
  \draw[color=gray] (\i,0)--(\i,6);
  }
   \filldraw (1.5,2.5) circle (5pt);
     \filldraw (3.5,3.5) circle (5pt);
       \filldraw (2.5,1.5) circle (5pt);
         \filldraw (4.5,.5) circle (5pt);
 \draw[very thick] (0,0)--(0,2)--(1,2)--(1,3)--(2,3)--(2,5)--(5,5)--(5,6)--(6,6)--(6,0)--(0,0)--cycle;
 \node at (1.5, .5) {\cross};
  \node at (1.5, 1.5) {\cross};
    \node at (2.5, .5) {\cross};
    \node at (2.5, 2.5) {\cross};
      \node at (3.5, .5) {\cross};
        \node at (3.5, 1.5) {\cross};
    \node at (3.5, 2.5) {\cross};
      \node at (4.5, 1.5) {\cross};
        \node at (4.5, 2.5) {\cross};
    \node at (4.5, 3.5) {\cross};
           \node at (5.5, .5) {\cross};
       \node at (5.5, 1.5) {\cross};
        \node at (5.5, 2.5) {\cross};
    \node at (5.5, 3.5) {\cross};
\end{tikzpicture}
\qquad\qquad\quad
\begin{tikzpicture}[scale=.54]
 \foreach \i in {0,...,6} {
 \draw[color=gray] (0,\i)--(6,\i);
  \draw[color=gray] (\i,0)--(\i,6);
  }
  \foreach \i in {2,...,5}{
      \draw[thick] (.5, \i.5) circle (5.5pt);
      }
   \filldraw (.5,1.5) circle (5pt);
     \filldraw (1.5,3.5) circle (5pt);
       \filldraw (2.5,.5) circle (5pt);
         \filldraw (3.5,4.5) circle (5pt);
                  \filldraw (4.5,2.5) circle (5pt);
                           \filldraw (5.5,5.5) circle (5pt);
    \draw[very thick] (1,0)--(1,1)--(2,1)--(2,3)--(3,3)--(3,4)--(6,4)--(6,0)--(1,0)--cycle;
      \draw[thick] (1.5, .5) circle (5.5pt);
          \draw[thick] (1.5, 4.5) circle (5.5pt);
              \draw[thick] (1.5, 5.5) circle (5.5pt);
                  \draw[thick] (2.5, 2.5) circle (5.5pt);
                           \draw[thick] (3.5, 2.5) circle (5.5pt);
                                    \draw[thick] (3.5, 5.5) circle (5.5pt);
   \foreach \i in {0,...,4}{
        \node at (5.5, \i.5) {\cross};
        }
          \node at (4.5, .5) {\cross};
              \node at (4.5, 1.5) {\cross};
              \node at (4.5, 3.5) {\cross};
                    \node at (4.5, 4.5) {\cross};
                          \node at (3.5, .5) {\cross};
                                  \node at (3.5, 1.5) {\cross};
                                          \node at (3.5, 3.5) {\cross};
                                                  \node at (1.5, 1.5) {\cross};
                                                        \node at (2.5, 1.5) {\cross};
                                            \node at (2.5, 3.5) {\cross};
         \end{tikzpicture}
\caption{On the left, a placement of $4$ nonattacking rooks (denoted by bullets) on $B(2,3,5,5,5,6)$ with $\inv=8$.  On the right, a placement of $6$ rooks on $[n]\times [n]$ with
$2$ rooks on $B(0,1,3,4,4,4)$, where $\stat_{B}=10$.}
\label{Ferrers}
\end{figure}

\section{Monomial expansion into $\footnotesize{\begin{bmatrix}\alpha+k\\n\end{bmatrix}_q}$ basis}\label{Sec: binomial alpha+k choose n basis expansion}
\subsection{Monomial expansion}\label{subsec: monomial expansion}
Given an $n$-Dyck path \( \pi \), we define an ordering \( \prec_\pi \) for \emph{bi-letters} $(i,j)\in \mathbb{Z}_{> 0}\times [n]$ as follows:
\[
  (i,j) \prec_\pi (i',j') \iff
  \begin{cases}
    &  i < i' ~\text{ or }~\\
    & i = i'~ \text{ and } ~ j<_\pi j'.
    \end{cases}
\]
For $\lambda \vdash n$, let $M(\lambda)$ be the set of all words 
of {\it weight} $\lambda$, which means all words where $i$ occurs $\lambda_i$ times for $i\ge 1$. For a bi-word \(\displaystyle (w, \sigma) \in M(\lambda) \times \mathfrak{S}_n\), we say that \( i\in [n-1] \)
is an \emph{ascent} of \( (w, \sigma) \) with respect to \( \pi \) if
\[ \displaystyle (w_i, \sigma_i) \prec_\pi (w_{i+1}, \sigma_{i+1}) .\]
We define \( \asc_\pi (w, \sigma) \) as the number of ascents of \( (w, \sigma) \) with respect to \( \pi \). 
Let $\comaj_\pi (w,\sigma)$ be the sum of positions where the ascents of $(w,\sigma)$ occur. Finally, for a bi-word $(w,\sigma)\in M(\lambda)\times\mathfrak{S}_n$, we define a statistic $\stat_\pi$ by
\[
    \stat_\pi(w,\sigma)={\inv_\pi(\tilde{w}) +\sum_{i=1}^{\ell(\lambda)}\inv_{\pi[w^{-1}(\{i\})]}(\sigma[w^{-1}(\{i\})]) + \binom{n}{2}-nk+\comaj_\pi(w,\sigma)},
\]
where \( \tilde{w}(i)=w_{(\sigma^{-1})_i} \), $w^{-1}(\{i\}):=\{j\in[n]:w_j=i\}$, $\pi[A]$ is the induced subgraph $\pi$ on $A$, and $\sigma[A]$ is the restriction of permutation $\sigma$ to $A$. Using these notions, we state a monomial expansion of $\chi^{(\alpha)}_\pi[X;q]$ into the basis $\left\{\footnotesize{\begin{bmatrix}\alpha+k\\n\end{bmatrix}_q}\right\}_{0\le k \le n-1}$. 
 
\begin{thm}\label{thm: alpha+k choose n basis monomial expansion} For an $n$-Dyck path $\pi$, we have
\[
  \chi_\pi^{(\alpha)}[X;q] = \sum_{\lambda\vdash n} \sum_{(w, \sigma)\in M(\lambda) \times \mathfrak{S}_n } q^{\stat_\pi(w,\sigma)}
  \footnotesize{\begin{bmatrix}\alpha+\asc_\pi(w,\sigma)\\n\end{bmatrix}_q} m_\lambda [X].
\]  
\end{thm}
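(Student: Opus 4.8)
The plan is to prove the formula first for $\alpha\in\NN$ and then extend it to all real $\alpha$ by polynomiality. For fixed $\pi$ and $q$, expanding $\chi_\pi[X;q]$ in power sums and using $p_k[Q_\alpha]=(q^{k\alpha}-1)/(q^k-1)$ shows that each monomial coefficient of $\chi^{(\alpha)}_\pi[X;q]=\chi_\pi[Q_\alpha X;q]$ is a polynomial in $t:=q^\alpha$ of degree at most $n$; likewise each $\begin{bmatrix}\alpha+k\\n\end{bmatrix}_q=\prod_{m=0}^{n-1}[\alpha+k-m]_q/[n]_q!$ is a polynomial in $t$ of degree $n$. Hence both sides of the claimed identity are, coefficient by coefficient, polynomials in $t$ of degree at most $n$, and it suffices to prove equality at the infinitely many specializations $t=q^{0},q^{1},q^{2},\dots$, i.e.\ for all $\alpha\in\NN$.

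For $\alpha\in\NN$ I would read $Q_\alpha=1+q+\cdots+q^{\alpha-1}$ as a genuine alphabet of \emph{shades} $\{0,1,\dots,\alpha-1\}$, so that $Q_\alpha X$ is the ordered alphabet of bi-letters $q^{s}x_i$. Using \eqref{def:chromsymasc} together with the standardization framework \eqref{Standard}--\eqref{TStandard} of Section~\ref{subsec: superization} (the same device used to prove Proposition~\ref{prop: q-chromatic}), $\chi_\pi[Q_\alpha X;q]$ becomes the generating function over proper colorings of $G(\pi)$ that assign to each vertex $v$ a variable index $i_v$ and a shade $s_v\in\{0,\dots,\alpha-1\}$, weighted by $q^{\asc_\pi}\prod_v q^{s_v}x_{i_v}$ for a fixed total order on bi-letters. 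Extracting the coefficient of $m_\lambda[X]$ restricts the sum to colorings whose multiset of variable indices is $\lambda$. The next step is to sort each such shaded coloring into a single length-$n$ sequence whose shades are weakly increasing, recording the variable indices in reading order as a word $w\in M(\lambda)$ and the vertices as a permutation $\sigma\in\mathfrak{S}_n$, so that the underlying coloring is $\tilde w$ with $\tilde w(i)=w_{(\sigma^{-1})_i}$; the permutation $\sigma$ encodes the freedom of ordering vertices within a common shade, which is why $\sigma$ ranges over all of $\mathfrak{S}_n$.

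In this reading the shade sequence $0\le s_1\le\cdots\le s_n\le\alpha-1$ is forced to increase strictly exactly at the non-ascent (descent) positions of $(w,\sigma)$ with respect to $\prec_\pi$, of which there are $n-1-\asc_\pi(w,\sigma)$. Summing $q^{\sum_v s_v}$ over all such sequences is a $q$-analogue of stars-and-bars: subtracting the forced jumps turns it into an unrestricted weakly increasing sequence bounded by $\alpha-1-\bigl(n-1-\asc_\pi\bigr)$, which yields $q^{\,c}\begin{bmatrix}\alpha+\asc_\pi(w,\sigma)\\n\end{bmatrix}_q$ with $c=\sum_{e}(n-e)$ over the descent positions $e$. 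Since the descent positions are the complement in $[n-1]$ of the ascent positions, one has $\sum_e e=\binom{n}{2}-\comaj_\pi(w,\sigma)$ and there are $n-1-\asc_\pi$ of them, giving precisely $c=\binom{n}{2}-n\,\asc_\pi(w,\sigma)+\comaj_\pi(w,\sigma)$, which accounts for three of the four terms of $\stat_\pi$ and for the $q$-binomial factor. It then remains to show that the shade-independent part of the coloring weight reorganizes, after the grouping by $\tilde w$, into $\inv_\pi(\tilde w)+\sum_{i=1}^{\ell(\lambda)}\inv_{\pi[w^{-1}(\{i\})]}(\sigma[w^{-1}(\{i\})])$, separating inversions between distinct variable indices (which see only $\tilde w$) from inversions within a common variable-index class (which see $\sigma$).

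I expect the main obstacle to be exactly this last bijective reorganization: one must arrange the sort so that the forced-strict shade positions coincide with the $\prec_\pi$-descents of $(w,\sigma)$ — this is what collapses what would naively be a product of $q$-binomials over color classes into the single coefficient $\begin{bmatrix}\alpha+\asc_\pi\\n\end{bmatrix}_q$, a merging made possible only by letting $\sigma$ run over all of $\mathfrak{S}_n$ (a $q$-Vandermonde-type phenomenon) — while simultaneously tracking the inversion weight through the standardization so that it lands exactly on the two $\inv$ terms of $\stat_\pi$, using the complementation $\asc_\pi+\inv_\pi=$ (number of edges of $G(\pi)$) valid on proper colorings. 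Verifying that all four contributions assemble with no leftover powers of $q$ is the delicate heart of the argument.
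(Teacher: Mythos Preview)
Your overall architecture is the same as the paper's: reduce to integer $\alpha$ by polynomiality in $t=q^{\alpha}$, interpret $\chi_\pi^{(\alpha)}$ for such $\alpha$ as a sum over proper ``shaded'' (decorated) colorings, set up a bijection between these and triples $(w,\sigma,\tau)$ with $\tau$ a weakly increasing shade sequence, and then sum over $\tau$ to produce $q^{\binom{n}{2}-nk+\comaj_\pi(w,\sigma)}\begin{bmatrix}\alpha+k\\n\end{bmatrix}_q$. In fact your route to the decorated-coloring expansion is more elementary than the paper's: the paper reaches its equation (\ref{eqn:monoex_1}) by passing through $\LLT_\pi[(t-1)X]$ and superization, then converting a rook product back into a sum over proper decorations via Proposition~\ref{prop: q-chromatic}, whereas your direct substitution of the genuine positive alphabet $Q_\alpha X$ into the definition of $\chi_\pi$ lands on the same expression in one step. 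The bijection you sketch (sorting by shade, recording $(w,\sigma)$, with strict shade increases forced exactly at the $\prec_\pi$-descents of $(w,\sigma)$) is precisely the paper's map $\Phi$, whose inverse the paper describes as the colexicographic sort of the bi-letters $(c_j,d_j)$; making that sort explicit is what turns your ``freedom of ordering within a common shade'' into a well-defined bijection.

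The one real slip is the $\asc$/$\inv$ mismatch. You start from \eqref{def:chromsymasc}, so the coloring weight you obtain is $q^{\asc_\pi((c,d))}$, which under the lexicographic order on bi-letters splits as $\asc_\pi(c)+\sum_i \asc_{\pi[c^{-1}(i)]}(d|)$, not the $\inv_\pi(\tilde w)+\sum_i \inv_{\pi[w^{-1}(i)]}(\sigma|)$ that appears in $\stat_\pi$. The complementation $\asc_\pi+\inv_\pi=\area(\pi)$ you invoke holds for the \emph{full} proper bi-letter coloring, but it does not transfer the outer term $\asc_\pi(c)$ to $\inv_\pi(c)$ and the inner terms $\asc(d|)$ to $\inv(d|)$ separately, since $c$ itself is not proper. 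The fix is painless: use the $\inv$ definition \eqref{def:chromsym} (equivalently, reverse the total order on $Q_\alpha X$) from the very beginning. With lexicographic order on $(c,d)$ one then has $\inv_\pi((c,d))=\inv_\pi(c)+\sum_i \inv_{\pi[c^{-1}(i)]}(d|_{c^{-1}(i)})$ on the nose, and after the colex sort the relative order of $d$ and of $\sigma$ within each color class agree, giving $\inv_{\pi[c^{-1}(i)]}(d|)=\inv_{\pi[w^{-1}(i)]}(\sigma|)$ with no leftover powers of $q$. What you flag as ``the delicate heart of the argument'' then evaporates.
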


Before we provide a proof of Theorem~\ref{thm: alpha+k choose n basis monomial expansion} in Section~\ref{subsec: proof of Theorem A}, we demonstrate an example of the monomial expansion given in Theorem~\ref{thm: alpha+k choose n basis monomial expansion} first.

\begin{exam}
    Let $\pi=(2,3,3)$ be a Dyck path. For brevity, we let $q=1$ and consider the $\alpha$-chromatic symmetric function $\chi^{(\alpha)}_\pi[X;1]$. There are 18 bi-words $(w,\sigma)\in M(2,1)\times \mathfrak{S}_3$, and the ascents of those bi-words are given as follows.
\begin{table}[h]
\centering
\begin{tabular}{|c|c|c|c|c|c|c|c|c|c|c|c|c|c|c|c|c|c|c|}
\hline
$w$ & 112 & 112 & 112 & 112 & 112 & 112 & 121 & 121 & 121 & 121 & 121 & 121 & 211 & 211 & 211 & 211 & 211 & 211 \\ \hline
$\sigma$ & 123 & 213 & 231 & 312 & 321 & 132 & 123 & 132 & 213 & 231 & 312 & 321 & 123 & 132 & 213 & 231 & 312 & 321 \\ \hline
$\text{asc}_\pi$ & 1 & 1 & 1 & 1 & 1 & 2 & 1 & 1 & 1 & 1 & 1 & 1 & 0 & 0 & 1 & 0 & 0 & 0 \\ \hline
\end{tabular}
\end{table}

Therefore, the \( m_{(2,1)} \) coefficient of \( \chi^{(\alpha)}_\pi[X;1] \) is
\[
\binom{\alpha + 2}{n} + 12 \binom{\alpha + 1}{n} + 5 \binom{\alpha}{n}.
\]
Further computation yields the following expression for \( \chi^{(\alpha)}_{\pi}[X;1] \).
\begin{align*}
    \chi^{(\alpha)}_{\pi}[X;1] =\left(  6\binom{\alpha+2}{n}+24\binom{\alpha+1}{n}+6\binom{\alpha}{n} \right)m_{(1,1,1)}+ \left(  \binom{\alpha +2}{n} +12\binom{\alpha+1}{n}+5\binom{\alpha}{n} \right)m_{(2,1)} +\left(  2\binom{\alpha +1}{n}+4\binom{\alpha}{n} \right)m_{(3)}. 
    \end{align*}
\end{exam}

\subsection{Proof of Theorem~\ref{thm: alpha+k choose n basis monomial expansion}}\label{subsec: proof of Theorem A}
Recall the definition of the unicellular LLT polynomials 
\[
\LLT_{\pi}[X;q]=\sum_{c\in \mathbb{Z}_{>0}^n}q^{\inv_{\pi}(c)}x^c.
\]
We can derive an expression for $\LLT_{\pi}[(t-1)X;q]$ by an extension of our earlier derivation of (\ref{QQ}).  Consider the ordering 
\[
1<\overline{1}<2<\overline{2}<\cdots < n <\overline{n}
\]
on $\mathcal{A}_{\pm}$.
By letting $f=\LLT_\pi[X]$, $X=tX$ and $Y=\epsilon X$ in \eqref{eqn:superization} and by using  \eqref{TStandard},
we have
\begin{equation}\label{Eq: LLT superization1}
    \LLT_\pi[(t-1)X;q] = \sum_{\tilde c\in \mathcal A _{\pm}^n}
    q^{\inv_\pi(\tilde c)} t^{\text{pos}(\tilde c)}(-1)^{\text{neg}(\tilde c)}
    x^{|\tilde c |},
\end{equation}
where $|\tilde c|$ is the positive word obtained by replacing each $\overline k$ in $\tilde c$ by its absolute value $k$. 
To compute the right hand side of \eqref{Eq: LLT superization1}, consider the 
superfillings $\tilde{c}$ with fixed absolute value $c=(c_1,\dots, c_n)$.
Suppose that we fixed the values of $\tilde{c}_k$, for $2\le k\le n$, and 
let us consider the cases when $\tilde{c}_1$ is $c_1$ or $\overline{c}_1$.
If we take $\tilde{c}_1$ to  be $c_1$, then it does not contribute any inversions since it does not form inversion pairs with any occurrences of $c_1$ or $\overline{c}_1$ in rows below. However, if we take $c_1$ to be $\overline{c}_1$,
then any attacking pairs with label $c_1$ or $\overline{c}_1$ in below rows would contribute inversions. Inversions involving other numbers which are neither $c_1$ nor $\overline c_1$ are not affected by this sign changes.  As a result, we get 
the factor $(t-q^{N_\pi(1)})$ where the term $t$ corresponds to the choice of $c_1$ and $-q^{N_\pi(1)}$ to the choice of $\overline c_1$, where  $N_\pi (i)$ is the number of $j$'s such that $i<j$, $i\nless_\pi j$ and $c_i=c_j$, for $1\le i <n$.   By 
induction we get
\begin{equation*}\label{Eq: LLT superization}
    \LLT_\pi[(t-1)X;q] = \sum_{c\in\mathbb{Z}_{>0}^n}q^{\inv_\pi(c)}\prod_{i=1}^n (t-q^{N_\pi(i)})x^c.
\end{equation*}
By replacing $t$ by $q^\alpha$ and dividing
by \( (q-1)^n \), what we get in the left hand side is the $\alpha$-chromatic symmetric function by Proposition~\ref{prop: plethystic relation between X and LLT Carlsson Mellit}. Therefore, we have
\begin{equation}\label{eq: LLT m}
\chi_{\pi}^{(\alpha)} [X;q]=\frac{\LLT_{\pi}[(q^\alpha-1)X;q]}{(q-1)^n}
=\sum_{\lambda\vdash n}\sum_{c\in M(\lambda)}
 q^{{\inv_\pi}(c)}\prod_{i=1}^{\ell(\lambda)}\left( q^{{\area}(\pi[c^{-1}(i)])}
 \prod_{j=1}^{\lambda_i}[\alpha-a_j(\pi[c^{-1}(i)])]_q\right)m_\lambda [X].
\end{equation}

Now we simplify the above identity \eqref{eq: LLT m}. For a given positive integer $\alpha$, a word (coloring) $c\in \mathbb{Z}_{>0}^n$ and  another word (decoration) $d\in {\{0,1,\dots, \alpha-1\}}^n$, we say that the bi-word $(c,d)$ is an \emph{$\alpha$-decorated proper coloring} of $\pi$ of weight $\lambda$ if the weight of $c$ is $\lambda$, and for any cell $(i,j)$ with $i<j$ and $i\nless_\pi j$, we have $(c_i,d_i)\neq (c_j,d_j)$. Let $\Colo^{(\alpha)}_{\pi,\lambda}$ denote the set of $\alpha$-decorated proper colorings of weight $\lambda$. By Proposition~\ref{prop: q-chromatic} and
the definition of the chromatic symmetric function in \eqref{def:chromsym}, we have
\begin{equation*}
  \prod_{j=1}^n [\alpha-a_j(\pi)]_q =q^{-\area(\pi)} \sum_{\substack{\sigma\in \{0,1,\dots,\alpha-1\}^n \\ \text{ proper coloring of $\pi$}}} q^{\inv_\pi(\sigma)+|\sigma|}.
\end{equation*}
Utilizing \eqref{eq: LLT m} and the identity above, we get the monomial expansion of the $\alpha$-chromatic symmetric function as follows:
\begin{equation}\label{eqn:monoex_1}
\chi^{(\alpha)}_\pi [X;q] = \sum_{\lambda\vdash n }\sum_{(c,d)\in \Colo^{(\alpha)}_{\pi,\lambda}}  q^{\inv_\pi (c)+|d|}\prod_{i=1}^{\ell(\lambda)}
q^{\inv_{\pi[c^{-1}(i)]}(d(c^{-1}(i)))} m_\lambda[X].
\end{equation}

To prove Theorem~\ref{thm: alpha+k choose n basis monomial expansion}, it suffices to verify the identity for $n$ distinct values of $\alpha$. To that end, we fix an integer $\alpha$ and prove it by using the bijection below.

\begin{defn}\label{def: the map Phi}

Consider an integer $0\le \alpha \le n-1$. Let $\binom{[\alpha+k]}{n}$ denote the multiset of size $n$ whose elements are in $\{ 0, 1, \dots, \alpha+k-n\}$, namely  
\[
\binom{[\alpha+k]}{n} = \{ \tau=(\tau_1, \tau_2,\dots, \tau_n) ~ | ~ 0\le \tau_i\le \alpha+k-n,\,  \tau_1\le \cdots \le \tau_n\}.
\]
Note that the size of this set is equal to $\binom{\alpha+k}{n}$. 
The map
\[ 
    \Phi:\bigcup_{k=0}^{n-1} \left\{(w,\sigma)\in M(\lambda)\times \mathfrak{S}_n:\asc(w,\sigma)=k\right\} \times \binom{[\alpha+k]}{n} \rightarrow \Colo^{(\alpha)}_{\pi,\lambda}
\]
is defined by
\[
    \Phi(w,\sigma,\tau)_{\sigma_i} = (w_i, \tau_i+(i-1)-\asc_\pi^{<i}(w,\sigma)).
\]
Here, $\asc_\pi^{<i}(w,\sigma)$ counts the ascents occurring in the first $i-1$ positions.
\end{defn}

\begin{prop}
    The map $\Phi$ defined in Definition~\ref{def: the map Phi} is a bijection.
\end{prop}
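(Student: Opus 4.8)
The plan is to produce an explicit two-sided inverse $\Psi$ of $\Phi$, realized as a sorting procedure on positions, and to verify well-definedness at both ends. The whole argument rests on one monotonicity observation, so I would isolate it first. Writing $(c,d)=\Phi(w,\sigma,\tau)$ and setting $\delta_i:=(i-1)-\asc_\pi^{<i}(w,\sigma)$ (the number of descents among the first $i-1$ positions), the defining formula reads $d_{\sigma_i}=\tau_i+\delta_i$. Both $(\tau_i)_i$ and $(\delta_i)_i$ are weakly increasing in $i$ — indeed $\delta_{i+1}-\delta_i$ equals $1$ precisely when position $i$ is a descent — so the decorations $d_{\sigma_1},\dots,d_{\sigma_n}$ read in $\sigma$-order are weakly increasing. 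Furthermore, $d_{\sigma_i}=d_{\sigma_{i+1}}$ forces simultaneously $\tau_i=\tau_{i+1}$ and that position $i$ is an ascent, i.e.\ $(w_i,\sigma_i)\prec_\pi(w_{i+1},\sigma_{i+1})$, while any descent forces a strict increase of the decoration. This dichotomy is the backbone of everything that follows.

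Next I would check that $\Phi$ actually lands in $\Colo^{(\alpha)}_{\pi,\lambda}$. The coloring $c$ has weight $\lambda$ since it is a rearrangement of $w\in M(\lambda)$. Writing $k=\asc_\pi(w,\sigma)$, the decoration bounds $0\le d_{\sigma_i}\le\alpha-1$ follow from $0\le\tau_i\le\tau_n\le\alpha+k-n$ together with $0\le\delta_i\le\delta_n=(n-1)-k$. For properness I would argue by contradiction: if an attacking pair $(p,p')$ with $p<p'$ and $p\nless_\pi p'$ carried equal bi-letters $(c_p,d_p)=(c_{p'},d_{p'})$, then writing $p=\sigma_i$, $p'=\sigma_j$ with (say) $i<j$, the equality $d_{\sigma_i}=d_{\sigma_j}$ forces all of $d_{\sigma_i},\dots,d_{\sigma_j}$ equal, hence positions $i,\dots,j-1$ are consecutive ascents; since $c_p=c_{p'}$ the colors along this chain are constant, so the $\prec_\pi$-ascents collapse to $\sigma_i<_\pi\sigma_{i+1}<_\pi\cdots<_\pi\sigma_j$, giving $p<_\pi p'$ and contradicting $p\nless_\pi p'$ (the case $j<i$ is symmetric and likewise impossible).

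I would then define $\Psi$ on $(c,d)\in\Colo^{(\alpha)}_{\pi,\lambda}$ by letting $\sigma$ be the permutation listing the positions $p\in[n]$ in weakly increasing order of the decoration $d_p$, breaking ties by the bi-letter order $\prec_\pi$ applied to $(c_p,p)$, and then setting $w=c\circ\sigma$ and $\tau_i=d_{\sigma_i}-(i-1)+\asc_\pi^{<i}(w,\sigma)$. The step I expect to be the main obstacle is showing this sort is well-defined and unique, i.e.\ that $\prec_\pi$ restricts to a \emph{total} order on each block of positions sharing a common decoration value. This is exactly where properness is used: within such a block, two positions of distinct color are ordered by color, while two positions of equal color and equal decoration cannot be an attacking pair, hence are $\pi$-comparable, so $<_\pi$ linearly orders each equal-color sub-block. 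Granting this, $\sigma$ is uniquely determined, $w=c\circ\sigma\in M(\lambda)$, and the monotonicity of Step~1 shows $\tau$ is weakly increasing; the bounds $0\le\tau_1$ and $\tau_n\le\alpha+k-n$ reduce to $0\le d_{\sigma_1}$ and $d_{\sigma_n}\le\alpha-1$, placing $\tau\in\binom{[\alpha+k]}{n}$ with $k=\asc_\pi(w,\sigma)$.

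Finally I would verify that the two compositions are the identity. For $\Psi\circ\Phi=\mathrm{id}$, the governing identity shows the sequence $\sigma_1,\dots,\sigma_n$ already has weakly increasing decorations and is $\prec_\pi$-increasing within each decoration block, so it coincides with the unique sorted order produced by $\Psi$; the formulas then return $w$ and $\tau$ verbatim. For $\Phi\circ\Psi=\mathrm{id}$, applying $\Phi$ recovers $c$ from $c'_{\sigma_i}=w_i=c_{\sigma_i}$ and recovers $d$ from $d'_{\sigma_i}=\tau_i+\delta_i=d_{\sigma_i}$, the last equality being the definition of $\tau$. This establishes that $\Phi$ is a bijection; I expect the only genuinely delicate point to be the total-order claim of the previous paragraph, with the properness-to-comparability reduction being the crux.
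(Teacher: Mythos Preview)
Your proof is correct and follows essentially the same approach as the paper: construct the inverse by sorting the positions of an $\alpha$-decorated proper coloring according to their $(d,c)$-values, then recover $w$ and $\tau$ from the resulting permutation. Your argument is in fact more complete than the paper's sketch, since you make explicit the tie-breaking rule via $<_\pi$ (which the paper's ``colexicographic order on $(c_i,d_i)$'' leaves unstated, though distinct $\pi$-comparable positions can carry identical $(c,d)$ pairs) and you verify both that $\Phi$ lands in $\Colo^{(\alpha)}_{\pi,\lambda}$ and that the recovered $\tau$ lies in the correct range.
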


\begin{proof}
Given an $\alpha$-decorated proper coloring $(c, d) \in \Colo^{(\alpha)}_{\pi, \lambda}$, let $\sigma_i$ be the index $j$ such that $(c_i, d_i)$ is the $j$-th element in the colexicographic order among $\{(c_j,d_j)\}_{j\in[n]}$. This defines a unique permutation $\sigma\in\mathfrak{S}_n$. Let $w = (w_1, \dots, w_n)$ be the word where $w_i = c_{\sigma_i}$. Finally, let $\tilde{d}$ be the weakly increasing rearrangement of $d$, that is, \( \tilde{d}_i=d_{\sigma_i} \). Then $\tau = (\tau_1, \dots, \tau_n)$ is given by
\[
\tau_i = \tilde{d}_i - (i - 1) + \asc_\pi^{<i}(w, \sigma).
\]
It is straightforward to verify that $\Phi(w, \sigma, \tau) = (c, d)$, thus the map $\Phi$ is onto. Through this process, the permutation $\sigma$, the word $w$, and $\tau$ are uniquely defined, demonstrating that the map $\Phi$ is one-to-one.

\end{proof}

\begin{proof}[Proof of Theorem~\ref{thm: alpha+k choose n basis monomial expansion}]

Via the bijection $\Phi$, we can identify $\Colo^{(\alpha)}_{\pi, \lambda}$ with the corresponding set of  bi-words, and so the monomial coefficient in \eqref{eqn:monoex_1} 
can be rewritten as
\begin{equation*}\label{eqn:monoex_2}
[m_\lambda]\left(\chi^{(\alpha)}_\pi [X;q]\right) = 
\sum_{k=0}^{n-1}\sum_{\substack{(w, \sigma,\tau)\in (M(\lambda) \times \mathfrak{S}_n)\times \binom{[\alpha+k]}{n}\\ \asc_\pi(w,\sigma)=k}}q^{\inv_\pi(\tilde{w})+|\tilde{\tau}|}  \prod_{i=1}^{\ell(\lambda)} q^{\inv_{\pi[w^{-1}(\{i\})]}(\tilde{\tau}[w^{-1}(\{i\})])}, 
\end{equation*}
where \( \tilde{w}_i=w_{(\sigma^{-1})_i} \), $\tilde{\tau}=(\tilde{\tau}_1,\dots, \tilde{\tau}_n)$ and $\tilde{\tau}_i =\tau_i +i-1- \asc_\pi^{<i}(w,\sigma)$.

First, we claim that given $(w, \sigma)\in M(\lambda)\times \mathfrak{S}_n$ with $\asc_\pi (w,\sigma)=k$, the factor 
\[
q^{\inv_\pi (\tilde{w})}  \prod_{i=1}^{\ell(\lambda)} q^{\inv_{\pi[w^{-1}(\{i\})]}(\tilde{\tau}[w^{-1}(\{i\})])}
\]
is invariant under varying $\tau \in \binom{[\alpha +k]}{n}$. The term $q^{\inv_\pi (\tilde{w})}  $ is clearly independent of $\tau$, so we only need to care about the product $\prod_{i=1}^{\ell(\lambda)} q^{\inv_{\pi[w^{-1}(\{i\})]}(\tilde{\tau}[w^{-1}(\{i\})])}$. First, observe that 
$$\inv_{\pi[w^{-1}(\{i\})]}(\tilde{\tau}[w^{-1}(\{i\})]) = \inv_{\pi[w^{-1}(\{i\})]}(\sigma[w^{-1}(\{i\})]),$$ for each $i$, $1\le i \le \ell(\lambda)$. This is due to the way we define \(\sigma  \) according to the colexicographic order on \( (c,d) \in \mathcal{C}_{\pi, \lambda}^{(\alpha)} \), the relative order of the values of \( \sigma \) and \( \tilde{\tau} \) are the same within the set of \( w^{-1}(\{i\}) \)'s.
In $\sigma[w^{-1}(\{i\})]$, the inversion of \( \pi[w^{-1}(\{i\})] \) occurs when $\tilde{\tau}_l <\tilde{\tau}_m$ and $\sigma_l >\sigma_m$ 
for $l<m$ and $l\nless_{\pi} m$. Since we are considering $w^{-1}(\{i\})$, if $l\nless_{\pi} m$, 
the inequality $\tilde{\tau}_l <\tilde{\tau}_m$ holds for sure, so we only need to count the number of pairs such that 
$\sigma_l >\sigma_m$ for $l<m$ and $l\nless_{\pi} m$. Hence, we can factor out the term \(\displaystyle q^{\inv_\pi(\tilde{w})}  \prod_{i=1}^{\ell(\lambda)} q^{\inv_{\pi[w^{-1}(\{i\})]}(\sigma[w^{-1}(\{i\})])}\) from the summation over $\tau \in \binom{[\alpha+k]}{n}$.

Now we take care of the rest of the factor 
\[
\sum_{\tau\in\binom{[\alpha+k]}{n}}q^{|\tilde{\tau|}}
= \sum_{\tau\in\binom{[\alpha+k]}{n}}q^{\sum_{i=1}^n \left(\tau_i +i-1-\asc^{<i}_\pi(w,\sigma) \right)}
= q^{\sum_{i=1}^n\left( i-1-\asc_\pi^{<i}(w,\sigma)\right) } \sum_{\tau\in\binom{[\alpha+k]}{n}}q^{|\tau|},
\]
given $(w, \sigma)\in M(\lambda)\times \mathfrak{S}_n$ with $\asc_\pi (w,\sigma)=k$. 
For a given sequence $\tau =(\tau_1,\dots, \tau_n)$, with $0\le \tau_i \le \alpha+k-n$, $\tau_1 \le \tau_2\le \cdots \le \tau_n$, we can identify it with a lattice path contained in a rectangular shape of size $n\times(\alpha+k-n)$ 
with the $i$-th column height $\tau_i$. Then it is not very hard to see that 
\[
 \sum_{\tau\in\binom{[\alpha+k]}{n}}q^{|\tau|}= \footnotesize{\begin{bmatrix}\alpha+k\\n\end{bmatrix}_q}.
\]
Lastly, via a simple computation, we get 
\(\displaystyle 
q^{\sum_{i=1}^n\left( i-1-\asc^{<i}_\pi (w,\sigma)\right) } 
=q^{\binom{n}{2}-nk+\comaj_{\pi}(w, \sigma)},\) for a given bi-word  $(w, \sigma)\in W(\lambda)\times \mathfrak{S}_n$ with $\asc_\pi (w,\sigma)=k$. 
Thus we obtain Theorem~\ref{thm: alpha+k choose n basis monomial expansion}.
\end{proof}

\section{Expansion into $(\alpha)^{\underline{k}}$ basis}\label{Sec: bracket alpha choose k basis}

\subsection{The XY technique}\label{Sec:XYTechnique}
Given two sets of variables  $X$ and $Y$, let $XY = \{x_iy_j:i,j\ge 1\}$ and $\pi$ be an $n$-Dyck path.  
\begin{thm}
\label{XYThm}
\begin{align}
\label{XYresult}
\chi _{\pi}[XY;q] = \sum_{\lambda \vdash n } m_{\lambda}[X]
\sum_{w \in M(\lambda)} 
q^{\inv_{\pi}(w)} \chi _{\beta (\pi,w)} [Y;q],
\end{align}
where the $n$-Dyck path $\beta(\pi,w)$ corresponds to the graph $G_{\beta}$ obtained by starting with $G_{\pi}$ colored by $w$, and then
removing all edges which connect  two vertices $a,b$ with different colors 
(i.e. vertices satisfying $w_a\ne w_b$).
\end{thm}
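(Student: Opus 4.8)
The plan is to read $\chi_\pi[XY;q]$ directly off its combinatorial definition, exploiting the fact that $\chi_\pi[X;q]$ is a symmetric function and may therefore be evaluated on \emph{any} totally ordered alphabet. Concretely, I would order the product alphabet $\mathbb{Z}_{>0}\times\mathbb{Z}_{>0}$ lexicographically with the first ($X$-)coordinate dominant, and assign to the bi-letter $(a,b)$ the monomial $x_a y_b$. Then \eqref{def:chromsym} reads
\[
\chi_\pi[XY;q]=\sum_{\substack{(w,u)\\ \text{proper}}} q^{\inv_\pi(w,u)}\, x^w y^u,
\]
where a bi-coloring is recorded as an $X$-coloring $w\in\mathbb{Z}_{>0}^n$ (the first coordinates) together with a $Y$-coloring $u\in\mathbb{Z}_{>0}^n$ (the second coordinates); properness means $(w_i,u_i)\ne(w_j,u_j)$ for every attacking pair, and $\inv_\pi(w,u)$ counts attacking pairs $i<j$ with $(w_i,u_i)$ lexicographically larger than $(w_j,u_j)$.

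The heart of the argument is a factorization for fixed $w$. Because the order is $X$-dominant, an attacking pair with $w_i\ne w_j$ is an inversion exactly when $w_i>w_j$, while one with $w_i=w_j$ is an inversion exactly when $u_i>u_j$; hence
\[
\inv_\pi(w,u)=\inv_\pi(w)+\#\{\,i<j:\ i\nless_\pi j,\ w_i=w_j,\ u_i>u_j\,\}.
\]
By the definition of $\beta(\pi,w)$, whose edges are precisely the monochromatic attacking pairs, the second summand is the inversion count of $u$ on $G_\beta$, and properness forces $u$ to be a proper coloring of $G_\beta$. Summing over $u$ for fixed $w$ thus produces the factor $q^{\inv_\pi(w)}x^w\,\chi_{\beta(\pi,w)}[Y;q]$, so that
\[
\chi_\pi[XY;q]=\sum_{w\in\mathbb{Z}_{>0}^n} q^{\inv_\pi(w)}\,x^w\,\chi_{\beta(\pi,w)}[Y;q].
\]
Since $\chi_\pi[XY;q]$ is symmetric in $X$, its $m_\lambda[X]$-coefficient equals the coefficient of the monomial $x_1^{\lambda_1}x_2^{\lambda_2}\cdots$, and the $X$-colorings giving this monomial are exactly the words $w\in M(\lambda)$; this yields \eqref{XYresult}.

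The step I expect to be the main obstacle is justifying that $\chi_{\beta(\pi,w)}[Y;q]$ is a genuine (symmetric) chromatic symmetric function, i.e.\ that $\beta(\pi,w)$ really is a Dyck path. Here $G_\beta$ is the disjoint union, over the color classes $S=w^{-1}(\{c\})$, of the induced subgraphs $G_\pi[S]$. Each $G_\pi[S]$ is an induced subgraph of the incomparability graph of a natural unit interval order, and this class is hereditary (being $(2+2)$- and $(3+1)$-free is preserved under passing to induced subposets), so each $G_\pi[S]$ is again such a graph; equivalently, the convexity of the natural labeling---if $i<j<k$ and $\{i,k\}$ is an edge then so are $\{i,j\}$ and $\{j,k\}$---is inherited on the ordered vertex set $S$. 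A disjoint union of natural unit interval orders is again one (place the constituent interval families far apart), so $G_\beta$ corresponds to a Dyck path $\beta(\pi,w)$, and the inversion statistic computed with the inherited labeling agrees summand-by-summand with $\sum_S \inv_{G_\pi[S]}$. Consequently $\chi_{\beta(\pi,w)}[Y;q]=\prod_S\chi_{G_\pi[S]}[Y;q]$ is symmetric, which is exactly what the factorization above requires. I would isolate the hereditary and disjoint-union facts as a short lemma; the remaining bookkeeping is routine.
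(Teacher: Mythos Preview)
Your proposal is correct and follows essentially the same approach as the paper: both order the product alphabet lexicographically with the $X$-coordinate dominant, split $\inv_\pi$ of a bi-coloring into $\inv_\pi(w)$ plus inversions on the monochromatic subgraph, and factor accordingly. The only difference is that you actually sketch why $G_{\beta(\pi,w)}$ is again the incomparability graph of a natural unit interval order (via the hereditary and disjoint-union properties), whereas the paper explicitly leaves this verification to the reader.
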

\begin{proof} We use the lexicographic order on ordered pairs of positive integers $(a,b)$, that is:
\begin{align}
\label{order}
\text{if $a<c$ then $(a,b)<(c,d)$ for all $b,d$ and also
$(b,a)<(b,c)$ for all $b$}.
\end{align}
By definition,
\begin{align}
\label{DefXY}
\chi _{\pi} [XY;q] = 
\sum _{C \in (\mathbb Z_{>0}^2)^n \atop C \text{ proper } }
q^{\inv_{\pi}(C)} 
\prod_{i\ge 1} x_{w_{i}}y_{z_{i}},
\end{align}
where $C_i=(w_i,z_i)$ with both $w$ and $z$ in $\mathbb Z_{>0}^n$, and we use (\ref{order}) to determine inequalities among the $C_i$.  Now for any  $i <j$ where $(i,j) \in G_{\pi}$,  if $w_{i}\ne w_{j}$, then
whether or not $C_i$ and $C_j$ contribute to $\inv_{\pi}(C)$ is completely determined by the values of $w_{i}$ and $w_{j}$.  On the other hand, if 
$w_{i}=w_{j}$, then
whether or not $C_i$ and $C_j$ contribute to $\inv_{\pi}(C)$ is completely determined by the values of $z_{i}$ and $z_{j}$, and in this case we also must have 
 $z_{i}\ne z_{j}$, otherwise the coloring $C$ of $G_{\pi}$ would not be proper.  It follows that 
 \begin{align*}
 \inv_{\pi}(C) = \inv_{\pi}(w_{1},w_{2},\ldots ,w_{n}) + 
 \inv_{\beta (\pi,w)}(z_{1},z_{2},\ldots ,z_{n}),
 \end{align*}
 where the first term on the right counts the inversions amongst $C_i$ and $C_j$ which have unequal first coordinates, and 
  $ \inv_{\beta (\pi,w)}$ counts the inversions amongst $C_i$ and $C_j$ which have equal first coordinates.  Thus the contribution to $\chi _{\pi}[XY;q]$ from all
  colorings $C$ whose first coordinate $w$ has weight $\lambda$ on the right-hand-side of (\ref{DefXY}) factors into a term of weight $\lambda$ in the $X$ variables, times a sum of terms in the $Y$ variables, which are over proper colorings of $G_{\beta (\pi,w)}$.    We leave it to the reader to show that $G_{\beta(\pi,w)}$ is a unit interval order when 
  $G_{\pi}$ is.  See Example \ref{XYex} for an illustration of this construction for a given $\pi$ and coloring $C$.
\end{proof}

\begin{exam}
\label{XYex}
Let $\pi$ have  Hessenberg function  $(3,3,4,6,6,6,8,9,9)$, as on the left in Figure \ref{XYfig} below.  Here we have labelled the rows and columns to the left and above the figure.  The squares of the graph $G_{\pi}$ here are $\{(2,1),(3,2),(5,4),(6,4),(6,5),(7,6),(8,7),(9,7),(9,8)\}$.
We have placed the ordered pair designating the
color of  vertex $i$, i.e $(w_i,z_i)$, in square $(i,i)$ on the diagonal, so the color of 
vertex $1$ here is $(1,1)$, that of vertex $2$ is $(1,2)$, that of vertex $3$ $(1,1)$, etc..  
The statistic $\inv_{\pi}(C) = 5$, with specific inversions occurring at the edges $(3,2),(6,4),(6,5),(9,7),(9,8)$; 
$\inv_{\pi}(w) = 3$, with edges $(6,4),(6,5),(9,8)$ contributing, and $\inv_{\pi}(z) = 2$, from edges $(3,2),(9,7)$.   
To create $G_{\beta (\pi,w)}$, we remove
edges $(6,4),(6,5),(8,7),(9,8)$, then renumber the vertices so all vertices whose color have the same first coordinate are consecutive. 

\begin{figure}[!ht]
\begin{tikzpicture}[scale=.6]
\draw (0,0)--(0,9)--(9,9);
\foreach \i in {0,...,8}{
\draw (0,\i)--(\i+1,\i);
\draw (\i+1,9)--(\i+1,\i);
}
\foreach \i in {1,...,9}{
\node () at (-.5, 9-\i+.5) {$\i$};
\node () at (9-\i+.5, 9.5) {$\i$};
}
\draw[line width=1.5pt, color=dredcolor] (0,0)--(0,1)
(0,2)--(0,3)--(1,3)
(2,3)--(2,4)--(3,4)
(3,6)--(4,6)
(6,6)--(6,8)--(7,8)--(7,9)--(9,9);
\draw[line width=1.5pt, color=cerulean] (0,1)--(0,2)
(1,3)--(2,3)
(3,4)--(3,6)
(4,6)--(6,6);
\node () at (.5, .5) {\tiny$(1,1)$};
\node () at (1.5, 1.5) {\tiny$(2,3)$};
\node () at (2.5, 2.5) {\tiny$(1,3)$};
\node () at (3.5, 3.5) {\tiny$(1,2)$};
\node () at (4.5, 4.5) {\tiny$(2,4)$};
\node () at (5.5, 5.5) {\tiny$(2,1)$};
\node () at (6.5, 6.5) {\tiny$(1,1)$};
\node () at (7.5, 7.5) {\tiny$(1,2)$};
\node () at (8.5, 8.5) {\tiny$(1,1)$};
\draw[thick, ->] (9,4.5)--(11,4.5);
\end{tikzpicture}
\qquad
\begin{tikzpicture}[scale=.6]
\draw (0,0)--(0,9)--(9,9);
\foreach \i in {0,...,8}{
\draw (0,\i)--(\i+1,\i);
\draw (\i+1,9)--(\i+1,\i);
}
\foreach \i in {1,...,9}{
\node () at (-.5, 9-\i+.5) {$\i$};
\node () at (9-\i+.5, 9.5) {$\i$};
}
\draw[line width=1.5pt, color=dredcolor] (0,0)--(0,2)--(1,2)--(1,3)--(3,3)--(3,5)--(4,5)--(4,6)--(6,6);
\draw[line width=1.5pt, color=cerulean] (6,6)--(6,7)--(7,7)--(7,9)--(9,9);
\node () at (.5, .5) {\tiny$(1,1)$};
\node () at (1.5, 1.5) {\tiny$(1,3)$};
\node () at (2.5, 2.5) {\tiny$(1,2)$};
\node () at (3.5, 3.5) {\tiny$(1,1)$};
\node () at (4.5, 4.5) {\tiny$(1,2)$};
\node () at (5.5, 5.5) {\tiny$(1,1)$};
\node () at (6.5, 6.5) {\tiny$(2,3)$};
\node () at (7.5, 7.5) {\tiny$(2,4)$};
\node () at (8.5, 8.5) {\tiny$(2,1)$};
\end{tikzpicture}
\caption{On the left, a proper coloring of $G_{(3,3,4,6,6,6,8,9,9)}$, and on the right, the corresponding coloring of  $G_{\beta (\pi,w)}$.}
\label{XYfig}
\end{figure}

\end{exam}

\begin{cor}\label{Cor: Schur positivity alpha N}
Given an $n$-Dyck path $\pi$, let
\begin{align*}
\chi_{\pi}^{(\alpha)} [X;q] = \sum _{\lambda\vdash n} 
C_{\pi,\lambda}(\alpha) s_\lambda[X].
\end{align*}
Then if $\alpha \in \mathbb N$, $C_{\pi,\lambda}(\alpha) \in \mathbb N [q]$ and furthermore has a combinatorial interpretation.
\end{cor}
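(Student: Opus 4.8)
The plan is to prove that $C_{\pi,\lambda}(\alpha) \in \mathbb{N}[q]$ with a combinatorial interpretation by leveraging the $XY$-technique of Theorem~\ref{XYThm} together with the classical fact that the ordinary chromatic symmetric functions $\chi_\rho[Y;q]$ are Schur positive with an explicit combinatorial Schur expansion. The key observation is that specializing the $Y$-variables in Theorem~\ref{XYThm} recovers the $\alpha$-chromatic symmetric function. Specifically, for $\alpha \in \mathbb{N}$, recall that $Q_\alpha = [\alpha]_q = 1 + q + \cdots + q^{\alpha-1}$, and one checks that $\chi_\pi[Q_\alpha X; q] = \chi_\pi^{(\alpha)}[X;q]$ by Definition~\ref{def:alpha_chromsym}. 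First I would set $Y$ to be the principal specialization realizing the single-variable alphabet of size $\alpha$, so that $\chi_\rho[Y;q]$ becomes the principal specialization $\chi_\rho[Q_\alpha; q]$, which by Proposition~\ref{RProd} is a rook product lying in $\mathbb{N}[q]$ (being $q^{\area(\rho)}\prod_i [\alpha - a_i(\rho)]_q$, each factor nonnegative when $\alpha \in \mathbb{N}$).

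The cleaner route, however, is to keep the Schur structure intact on the $Y$-side. I would substitute $X \mapsto X$ and $Y \mapsto Q_\alpha = [\alpha]_q$ (a single specialized variable) into the identity
\[
\chi_\pi[XY;q] = \sum_{\lambda\vdash n} m_\lambda[X] \sum_{w\in M(\lambda)} q^{\inv_\pi(w)}\,\chi_{\beta(\pi,w)}[Y;q],
\]
which yields $\chi_\pi^{(\alpha)}[X;q] = \chi_\pi[Q_\alpha X; q]$ on the left. But to extract a \emph{Schur} expansion rather than a monomial one, I would instead expand each inner chromatic symmetric function $\chi_{\beta(\pi,w)}[Y;q]$ in the Schur basis. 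Since $\beta(\pi,w)$ is again a unit interval order (as asserted in the proof of Theorem~\ref{XYThm}), each $\chi_{\beta(\pi,w)}[Y;q]$ is Schur positive by the Gasharov--Shareshian--Wachs result, with Schur coefficients in $\mathbb{N}[q]$ given by a combinatorial count of $P$-tableaux (or equivalently $G$-descent-compatible fillings). Evaluating these Schur functions at the one-variable alphabet $Y = Q_\alpha$ gives $s_\mu[Q_\alpha] = s_\mu(1,q,\dots,q^{\alpha-1})$, a principal specialization that by the $q$-hook content formula lies in $\mathbb{N}[q]$ and vanishes unless $\ell(\mu) \le \alpha$.

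Assembling these pieces, I would write
\[
\chi_\pi^{(\alpha)}[X;q] = \sum_{\lambda\vdash n} s_\lambda[X]\, C_{\pi,\lambda}(\alpha),
\]
where each $C_{\pi,\lambda}(\alpha)$ is obtained by (i) expanding the $m_\lambda[X]$-indexed terms into the Schur basis via the inverse Kostka numbers, or more directly (ii) noting that the $X$-dependence already carries a genuine symmetric function whose Schur coefficients pair against the specialized $Y$-side contributions. The combinatorial interpretation of $C_{\pi,\lambda}(\alpha)$ then emerges as a positive sum over triples consisting of a coloring $w \in M(\mu)$ of $G_\pi$, a $P$-tableau of shape $\lambda$ for the graph $\beta(\pi,w)$, and a semistandard filling witnessing $s_\mu[Q_\alpha]$, each weighted by an explicit power of $q$ combining $\inv_\pi(w)$, the Gasharov--Shareshian--Wachs statistic on $\beta(\pi,w)$, and the content-weight from the principal specialization.

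The main obstacle I anticipate is the bookkeeping in step (ii): cleanly combining the $X$-side Schur expansion with the specialized $Y$-side so that the resulting coefficient of $s_\lambda[X]$ is manifestly a nonnegative-integer-coefficient polynomial in $q$ with a single unified combinatorial model. The subtlety is that the $XY$-identity naturally produces $m_\lambda[X]$, not $s_\lambda[X]$, so one must either reorganize the sum over colorings $w$ to directly realize Schur positivity on the $X$-side, or argue that the full two-variable object $\chi_\pi[XY;q]$ is Schur positive in $X$ (with coefficients themselves symmetric in $Y$) and then specialize $Y = Q_\alpha$. The latter is the more robust approach: establishing Schur positivity of $\chi_\pi[XY;q]$ as a symmetric function in $X$ with $\mathbb{N}[q]$-valued symmetric-function coefficients in $Y$, followed by the nonnegativity of principal specializations $s_\mu[Q_\alpha] \in \mathbb{N}[q]$ for $\alpha \in \mathbb{N}$, will give both the positivity and the combinatorial interpretation with the least friction.
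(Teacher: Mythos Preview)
Your approach is essentially the paper's approach, but you have specialized the wrong variable and thereby created an obstacle that does not exist. In Theorem~\ref{XYThm} the two alphabets enter symmetrically through the product $XY$, so you are free to set either one equal to $Q_\alpha$. The paper simply sets $X=Q_\alpha$ (not $Y=Q_\alpha$), obtaining
\[
\chi_\pi^{(\alpha)}[Y;q]=\chi_\pi[Q_\alpha Y;q]=\sum_{\lambda\vdash n} m_\lambda[Q_\alpha]\sum_{w\in M(\lambda)} q^{\inv_\pi(w)}\,\chi_{\beta(\pi,w)}[Y;q].
\]
Now the symmetric-function variables sit inside the $\chi_{\beta(\pi,w)}[Y;q]$, which are already Schur positive with the Shareshian--Wachs $P$-tableaux interpretation, and the scalar prefactors $m_\lambda[Q_\alpha]\,q^{\inv_\pi(w)}$ are manifestly in $\mathbb N[q]$ for $\alpha\in\mathbb N$. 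That is the entire proof.

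Your substitution $Y\mapsto Q_\alpha$ instead puts the remaining variables into $m_\lambda[X]$, which is why you then have to fight to recover a Schur expansion in $X$; the ``main obstacle'' you describe, and the proposed fix of proving Schur positivity of $\chi_\pi[XY;q]$ in $X$, is precisely the statement of Theorem~\ref{XYThm} with the roles of $X$ and $Y$ swapped. So your final paragraph circles back to the correct argument without quite stating it. There is no need for inverse Kostka numbers, triples of objects, or principal specializations $s_\mu[Q_\alpha]$: just specialize the other alphabet.
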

\begin{proof}   Let $X=Q_{\alpha}$ in (\ref{XYresult}).  Clearly  $m_{\lambda}(Q_{\alpha})\in \mathbb N[q]$ since $\alpha \in \mathbb N$.
Since all the Schur 
coefficients of the $\chi _{\beta(\pi,w)}(Y;q)$ are in $\mathbb N [q]$ and count weighted $P$-tableaux by the result of Shareshian and Wachs \cite{ShWa16}, 
everything in (\ref{XYresult}) is positive with a combinatorial interpretation.
\end{proof}

\subsection{Set partitions and the $(\alpha)^{\underline{k}}$ expansion}\label{subsec: Theorem B}
A \emph{set partition} of $[n]$ is a collection of nonempty subsets of $[n]$ such that each element in $[n]$ is included in exactly one subset. Each subset in a set partition is called a \emph{part}. The set of set partitions of $[n]$ into $k$ parts is denoted by $\SSS(n,k)$.

For an $n$-Dyck path $\pi$ and a set partition $S$ of $[n]$, we assign a Dyck path $\beta(\pi,S)$ as follows. First, order the parts of $S$ as $S=\{S^{(1)},\dots,S^{(k)}\}$ and let $w_S$ be the word obtained by replacing elements in $S^{(i)}$ with $i$. Now define
\[
    \beta(\pi,S) := \beta(\pi,w_S).
\]
Using this, we present the expansion of $\alpha$-chromatic symmetric functions into $(\alpha)^{\underline{k}}$ basis.

\begin{thm}\label{thm: falling factorial basis} For an $n$-Dyck path $\pi$, the $\alpha$-chromatic symmetric function $\chi^{(\alpha)}_\pi[X;1]$ at $q=1$ is expanded into falling factorial basis $\left\{(\alpha)^{\underline{k}}\right\}_{1\le k \le n}$ as
\begin{equation}\label{Eq: falling factorial expansion}
\chi_{\pi}^{(\alpha)}[X;1] = \sum_{k=1}^n (\alpha)^{\underline{k}}\sum_{S\in \SSS(n,k)} \chi_{\beta(\pi,S)}[X;1].
\end{equation}
In particular, the $\alpha$-chromatic symmetric function \( \chi_{\pi}^{(\alpha)} [X;1] \) is positively expanded in terms of the basis \( \left\{ (\alpha)^{\underline{k}}s_{\lambda} \right\}_{\substack{1\le k \le n, \\ \lambda\vdash n}}\).

\end{thm}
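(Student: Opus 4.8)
The plan is to specialize the $XY$-technique of Theorem~\ref{XYThm}, exactly as in the proof of Corollary~\ref{Cor: Schur positivity alpha N}, and then reorganize the resulting sum over colorings into a sum over set partitions. Concretely, I would set the first variable set in \eqref{XYresult} equal to $Q_\alpha$ and rename the second set back to $X$. Since $Q_\alpha X$ is precisely the plethystic argument of Definition~\ref{def:alpha_chromsym}, this gives
\[
\chi^{(\alpha)}_\pi[X;q]=\chi_\pi[Q_\alpha X;q]=\sum_{\lambda\vdash n} m_\lambda[Q_\alpha]\sum_{w\in M(\lambda)} q^{\inv_\pi(w)}\chi_{\beta(\pi,w)}[X;q].
\]
Setting $q=1$ then kills the factor $q^{\inv_\pi(w)}$, and since $Q_\alpha|_{q=1}$ is the alphabet of $\alpha$ ones, it turns $m_\lambda[Q_\alpha]$ into the ordinary evaluation $m_\lambda(1^\alpha)$. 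Thus $\chi^{(\alpha)}_\pi[X;1]=\sum_{\lambda\vdash n} m_\lambda(1^\alpha)\sum_{w\in M(\lambda)}\chi_{\beta(\pi,w)}[X;1]$, which is the starting point for the reorganization.

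The key structural observation is that $\chi_{\beta(\pi,w)}$ depends only on the set partition induced by $w$. Indeed, $\beta(\pi,w)$ is obtained from $G_\pi$ by deleting every edge joining two differently colored vertices, so $G_{\beta(\pi,w)}$ is the disjoint union of the induced subgraphs $G_\pi[w^{-1}(i)]$ over the color classes. Its chromatic symmetric function is therefore the product $\prod_i\chi_{G_\pi[w^{-1}(i)]}[X;1]$, which depends only on the multiset of color classes, i.e. only on the set partition $S(w)=\{w^{-1}(i)\}_i$. This is exactly why the object $\beta(\pi,S)$ of Subsection~\ref{subsec: Theorem B} is well defined independently of the chosen ordering of its parts, and it lets me replace $\chi_{\beta(\pi,w)}[X;1]$ by $\chi_{\beta(\pi,S(w))}[X;1]$ and group the sum over $w$ according to the set partition it induces.

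What remains is a bookkeeping identity. Fix $S\in\SSS(n,k)$ with block-size partition $\lambda=\lambda(S)$, so $\ell(\lambda)=k$. The words $w\in M(\lambda)$ with $S(w)=S$ are exactly the bijective assignments of the colors $1,\dots,k$ to the blocks of $S$ that respect block sizes, and there are $\prod_v m_v(\lambda)!$ of them, where $m_v(\lambda)$ is the number of parts of $\lambda$ equal to $v$. On the other hand, $m_\lambda(1^\alpha)=(\alpha)^{\underline{k}}/\prod_v m_v(\lambda)!$. Multiplying these, the factors $\prod_v m_v(\lambda)!$ cancel, so each $S\in\SSS(n,k)$ contributes with weight $(\alpha)^{\underline{k}}$, which yields \eqref{Eq: falling factorial expansion}. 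Since both sides are polynomials in $\alpha$ that agree at every $\alpha\in\mathbb N$ (where the literal alphabet interpretation of $Q_\alpha$ is valid), the identity holds for all real $\alpha$.

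Finally, the "in particular" statement is immediate: each $\chi_{\beta(\pi,S)}[X;1]$ is the chromatic symmetric function of the incomparability graph of a natural unit interval order, hence Schur positive by Shareshian--Wachs \cite{ShWa16}, so the right-hand side is a nonnegative combination of $\{(\alpha)^{\underline{k}}s_\lambda\}$. I expect the main obstacle to be the bookkeeping cancellation in the third paragraph; it is clean once set up, but it requires correctly counting the $\prod_v m_v(\lambda)!$ words per set partition against the value of $m_\lambda(1^\alpha)$. The other delicate points, namely verifying the disjoint-union factorization that makes $\chi_{\beta(\pi,w)}$ depend only on $S(w)$, and the polynomiality argument extending the identity from integer to real $\alpha$, are routine but should be stated explicitly.
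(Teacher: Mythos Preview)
Your proposal is correct and follows essentially the same argument as the paper: specialize $X\mapsto Q_\alpha$, $Y\mapsto X$ in Theorem~\ref{XYThm}, set $q=1$, then regroup the sum over $w\in M(\lambda)$ by the induced set partition $S(w)$, using that $m_\lambda(1^\alpha)=(\alpha)^{\underline{\ell(\lambda)}}/\prod_v m_v(\lambda)!$ cancels against the $\prod_v m_v(\lambda)!$ words per set partition. The only cosmetic differences are that the paper writes $m_\lambda(1^\alpha)$ as $\binom{\alpha}{\ell(\lambda)}\binom{\ell(\lambda)}{m_1(\lambda),m_2(\lambda),\dots}$ directly (so no separate polynomiality step is needed), and it leaves the factorization $\chi_{\beta(\pi,w)}[X;1]=\prod_i\chi_{G_\pi[w^{-1}(i)]}[X;1]$ implicit, whereas you spell it out.
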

\begin{remark}\label{rem:3}
Due to Hikita's proof of Stanley-Stembridge conjecture \cite{Hikita}, Theorem~\ref{thm: falling factorial basis} gives 
$e$-positivity of the $\alpha$-chromatic symmetric functions when \( q=1 \). 
\end{remark}

Before presenting a proof, 
we give an example demonstrating Theorem~\ref{thm: falling factorial basis}.

\begin{exam}
    Let $\pi$ be a Dyck path $(2,3,3)$. The following drawn with blue lines are Dyck paths $\beta(\pi,S)$ for set partitions $\{\{1,2,3\}\}, \{\{1,2\},\{3\}\}, \{\{1,3\},\{2\}\}, \{\{1\},\{2,3\}\}, \{\{1\},\{2\},\{3\}\}$ of $[3]$, respectively.

    \begin{center}
    \begin{tikzpicture}[scale=0.5,baseline=1cm]
    \draw[black!20] (0,0) grid (3,3);
    \draw[line width=0.5mm,blue] (0,0)--(0,2);    \draw[line width=0.5mm,blue] (0,2)--(1,2);
    \draw[line width=0.5mm,blue] (1,2)--(1,3);
    \draw[line width=0.5mm,blue] (1,3)--(3,3);
\end{tikzpicture},\quad \begin{tikzpicture}[scale=0.5,baseline=1cm]
    \draw[black!20] (0,0) grid (3,3);
    \draw[line width=0.5mm,blue] (0,0)--(0,1);    \draw[line width=0.5mm,blue] (0,1)--(1,1);
    \draw[line width=0.5mm,blue] (1,1)--(1,3);
    \draw[line width=0.5mm,blue] (1,3)--(3,3);
\end{tikzpicture},\quad\begin{tikzpicture}[scale=0.5,baseline=1cm]
    \draw[black!20] (0,0) grid (3,3);
    \draw[line width=0.5mm,blue] (0,0)--(0,1);    \draw[line width=0.5mm,blue] (0,1)--(1,1);
    \draw[line width=0.5mm,blue] (1,1)--(1,2);
    \draw[line width=0.5mm,blue] (1,2)--(2,2);
    \draw[line width=0.5mm,blue] (2,2)--(2,3);
    \draw[line width=0.5mm,blue] (2,3)--(3,3);
\end{tikzpicture},\quad \begin{tikzpicture}[scale=0.5,baseline=1cm]
    \draw[black!20] (0,0) grid (3,3);
    \draw[line width=0.5mm,blue] (0,0)--(0,2);    \draw[line width=0.5mm,blue] (0,2)--(2,2);
    \draw[line width=0.5mm,blue] (2,2)--(2,3);
    \draw[line width=0.5mm,blue] (2,3)--(3,3);
\end{tikzpicture}, \quad \begin{tikzpicture}[scale=0.5,baseline=1cm]
    \draw[black!20] (0,0) grid (3,3);
    \draw[line width=0.5mm,blue] (0,0)--(0,1);    \draw[line width=0.5mm,blue] (0,1)--(1,1);
    \draw[line width=0.5mm,blue] (1,1)--(1,2);
    \draw[line width=0.5mm,blue] (1,2)--(2,2);
    \draw[line width=0.5mm,blue] (2,2)--(2,3);
    \draw[line width=0.5mm,blue] (2,3)--(3,3);
\end{tikzpicture}.
\end{center}

By Theorem~\ref{thm: falling factorial basis}, we have 
\[
    \chi^{(\alpha)}_\pi [X;1]= (\alpha)^{\underline{1}}\chi_{(2,3,3)} + (\alpha)^{\underline{2}}\left(\chi_{(1,3,3)} + \chi_{(1,2,3)} + \chi_{(2,2,3)} \right) + (\alpha)^{\underline{3}}X_{(1,2,3)}.
\]
\end{exam}

\begin{proof}[Proof of Theorem~\ref{thm: falling factorial basis}]
Let $w$ be a word of length $n$. Define $S(w)$ to be the set partition such that $i$ and $j$ belong to the same part of $S(w)$ if and only if $w_i=w_j$. For example, $S(31321)=\{\{1,3\},\{2,5\},\{4\}\}$. 
By letting $X=Q_{\alpha}$ and $Y=X$ in (\ref{XYresult}), we have
\[
    \chi_{\pi}^{(\alpha)}[X;q] = \sum_{\lambda \vdash n } m_{\lambda}[Q_\alpha] 
    \sum_{w \in M(\lambda)} 
    q^{{\inv}_{\pi}(w)} \chi _{\beta (\pi,w)} [X;q],
\]
Next specialize $q=1$ to obtain 
\begin{equation}\label{eq: alpha chromsym at q=1 first equation}
    \chi^{(\alpha)}_\pi[X;1] = \sum_{\lambda\vdash n}\sum_{w\in M(\lambda)} \binom{\alpha}{\ell(\lambda)}\binom{\ell(\lambda)}{m_1(\lambda),m_2(\lambda),\cdots,m_{\ell(\lambda)}(\lambda)} \chi_{\beta(\pi,w)}[X;1],    
\end{equation}
where $m_i(\lambda)$ denotes the multiplicity of $i$ in the partition $\lambda$.

For a set partition $S$ of $n$, one can associate a partition $\lambda(S)$ by reordering the sizes of each part of $S$ to a non-increasing sequence. Let $\SSS(n,\lambda)$ be the set of set partitions $S$ with $\lambda(S)=\lambda$. 

Fix a partition $\lambda\vdash n$ and a set partition $S\in\SSS(n,\lambda)$. Note that the number of words $w$ such that $S(w)=S$ is given by $m_1(\lambda)! m_2(\lambda)!\cdots m_{\ell(\lambda)}(\lambda)!$. Therefore, we can rewrite \eqref{eq: alpha chromsym at q=1 first equation} as 
\begin{align*}
    \chi^{(\alpha)}_\pi[X;1] &= \sum_{\lambda\vdash n}\sum_{S\in \SSS(\lambda)} \binom{\alpha}{\ell(\lambda)}\binom{\ell(\lambda)}{m_1(\lambda),m_2(\lambda),\cdots}m_1(\lambda)!m_2(\lambda)!\cdots m_{\ell(\lambda)}(\lambda)! \chi_{\beta(\pi,S)}[X;1]
    \\&= \sum_{\lambda\vdash n} \sum_{S\in \SSS(\lambda)}(\alpha)^{\underline{\ell(\lambda)}}\chi_{\beta(\pi,S)}[X;1],
\end{align*}
which finishes the proof.
\end{proof}

\begin{remark}
Theorem~\ref{thm: falling factorial basis} can be shown by using the rook theory. We provide a brief overview of the key ideas. Setting $q=1$ in equation \eqref{eq: LLT m}, we obtain the expression
\[ \chi_{\pi}^{(\alpha)} [X;1]=\sum_{\lambda\vdash n}\sum_{\substack{\sigma\in M(\lambda)}} \prod_{i=1}^{\ell(\lambda)} \prod_{j=1}^{\lambda_i}(\alpha-a_j(\pi[w^{-1}(\{i\})])) m_\lambda[X]. \]

By considering lexicographic order on the indices $(i,j)$ in the product $\prod_{i=1}^{\ell(\lambda)} \prod_{j=1}^{\lambda_i}(\alpha-a_j(\pi[w^{-1}(\{i\})]))$, this product can be interpreted as a rook product. Let $B(\pi,\sigma)$ represent the corresponding board. There is a natural correspondence between rook placements and set partitions as follows. Given a $k$-rook placement on a board $B$, let $f:[n]\rightarrow [n]\setminus\{1\}\cup\{*\}$ be a function defined by 
\[
    f(i)=\begin{cases}
        j+1 \text{ if there is no rook at $(i,j)$,}\\
        * \text{ otherwise}.
    \end{cases}
\]
Then the parts of the corresponding set partition are given by orbits of $f$ minus $*$. Through this correspondence between rook placements and set partitions, it becomes evident that the union of sets of $(n-k)$-rooks on the board $B(\pi,\sigma)$, for varying words $\sigma$, is in bijection with the union of proper colorings of $\beta(\pi,S)$ over all set partitions $S\in S(n,k)$. With this in mind, by applying the identity in Thereom \ref{thm: Garsia-Remmel} we have:
\begin{align*}
    \chi_{\pi}^{(\alpha)} [X;1]&
    =\sum_{\lambda\vdash n}\sum_{\substack{\sigma\in M(\lambda)}} \sum_{k=1}^n (\alpha)^{\underline{k}} r_{n-k}(B(\pi,\sigma)) m_\lambda[X]  \\
    &=\sum_{k=1}^n \sum_{S\in S(n,k)} (\alpha)^{\underline{k}} \chi_{\beta(\pi,S)}[X;1]. 
\end{align*} 
Alternatively, Theorem~\ref{thm: falling factorial basis} can be proven by using bijective arguments on $p$-coefficients of $\alpha$-chromatic symmetric function, or by employing the modular law for chromatic symmetric functions. In Appendix~\ref{Sec: appendix}, we outline a proof via the modular law.
\end{remark}

\section{Further results on Schur expansion}\label{Sec: Schur positivity}
\subsection{Schur positivity}

In this section we prove the Schur positivity of the $\alpha$-chromatic symmetric functions in terms of the basis $\left\{\footnotesize{\begin{bmatrix}\alpha+k\\n\end{bmatrix}_q}\right\}_{0\le k \le n-1}$. Notably, we establish a general result for Schur positivity of $s_\lambda[Q_\alpha X]$.

\begin{prop}\label{Lemma: Schur positivity q^a-1/q-1}
For a partition $\lambda$, \(s_\lambda\left[Q_\alpha X\right]\) is Schur positive in terms of the basis $\left\{\footnotesize{\begin{bmatrix}\alpha+k\\n\end{bmatrix}_q}\right\}_{0\le k \le n-1}$.
\end{prop}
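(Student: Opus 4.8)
The plan is to reduce this two‑alphabet statement to a single‑alphabet principal specialization, where the required positivity becomes transparent. Write $n=|\lambda|$. Since $Q_\alpha X$ is a product of alphabets, the Schur expansion in the $X$ variables is governed by the internal (Kronecker) product: for each $\nu\vdash n$,
\[
  \langle s_\lambda[Q_\alpha X],\, s_\nu[X]\rangle_X = (s_\lambda * s_\nu)[Q_\alpha] = \sum_{\mu\vdash n} g(\lambda,\mu,\nu)\, s_\mu[Q_\alpha],
\]
where the $g(\lambda,\mu,\nu)$ are the Kronecker coefficients. Because the internal product of two Schur functions is Schur positive, all $g(\lambda,\mu,\nu)\in\mathbb N$. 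Hence it suffices to prove the single‑alphabet lemma: for every $\mu\vdash n$, the principal specialization $s_\mu[Q_\alpha]$ lies in the $\mathbb N[q]$‑span of $\left\{\footnotesize{\begin{bmatrix}\alpha+k\\n\end{bmatrix}_q}\right\}_{0\le k\le n-1}$. This identity, being between Laurent polynomials of bounded degree in $[\alpha]_q$, may be verified for $\alpha\in\mathbb N$ (where $Q_\alpha$ is a genuine alphabet) and then extended to all $\alpha$ by polynomiality.

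To prove the lemma I would expand $s_\mu$ in Gessel's fundamental quasisymmetric basis, $s_\mu=\sum_{S\in\SYT(\mu)}F_{n,\text{Des}(S)}$, and apply the principal specialization identity
\[
  F_{n,D}[Q_\alpha] = q^{\maj(D)}\begin{bmatrix}\alpha+n-1-|D|\\ n\end{bmatrix}_q,\qquad D\subseteq[n-1],
\]
which follows directly from the definition of $F_{n,D}$ by counting weakly increasing sequences with the prescribed strictness at the positions of $D$; the extreme cases $D=\emptyset$ and $D=[n-1]$ recover $h_n[Q_\alpha]=\begin{bmatrix}\alpha+n-1\\n\end{bmatrix}_q$ and $e_n[Q_\alpha]=q^{\binom n2}\begin{bmatrix}\alpha\\n\end{bmatrix}_q$, which is a useful sanity check. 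As $D$ ranges over subsets of $[n-1]$, the quantity $k:=n-1-|D|$ ranges over $\{0,1,\dots,n-1\}$, and $\maj(D)\ge 0$, so
\[
  s_\mu[Q_\alpha]=\sum_{S\in\SYT(\mu)} q^{\maj(S)}\begin{bmatrix}\alpha+n-1-\operatorname{des}(S)\\ n\end{bmatrix}_q
\]
is visibly an $\mathbb N[q]$‑combination of the claimed basis, proving the lemma.

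Assembling the pieces then gives
\[
  s_\lambda[Q_\alpha X]=\sum_{\nu\vdash n}\Bigg(\sum_{\mu\vdash n} g(\lambda,\mu,\nu)\sum_{S\in\SYT(\mu)} q^{\maj(S)}\begin{bmatrix}\alpha+n-1-\operatorname{des}(S)\\ n\end{bmatrix}_q\Bigg) s_\nu[X],
\]
with every coefficient an $\mathbb N[q]$‑combination of $\left\{\begin{bmatrix}\alpha+k\\n\end{bmatrix}_q\right\}_{0\le k\le n-1}$, which is exactly the asserted Schur positivity. The one genuinely nontrivial input is the nonnegativity of the Kronecker coefficients $g(\lambda,\mu,\nu)$; this is standard but not elementary, so I would cite it rather than reprove it. The remaining ingredients—the fundamental quasisymmetric expansion of $s_\mu$ and the principal specialization formula for $F_{n,D}$—are classical, and the only care needed is the index bookkeeping confirming that the exponent $n-1-|D|$ lands precisely in the range $\{0,\dots,n-1\}$ indexing the basis.
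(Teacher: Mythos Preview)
Your proof is correct and follows essentially the same route as the paper: both use the Kronecker expansion $s_\lambda[Q_\alpha X]=\sum_{\mu,\nu}g^\lambda_{\mu,\nu}\,s_\mu[Q_\alpha]\,s_\nu[X]$ to reduce to the single-alphabet case, and then invoke the principal specialization formula $s_\mu[Q_\alpha]=\sum_{T\in\SYT(\mu)}q^{\maj(T)}\begin{bmatrix}\alpha+n-1-d(T)\\ n\end{bmatrix}_q$. One small caveat: your intermediate formula for $F_{n,D}[Q_\alpha]$ should have exponent $n|D|-\sum_{i\in D}i$ rather than $\maj(D)$ (compare the paper's \eqref{Fidentity}); the final Schur formula you quote is nonetheless correct, so this does not affect the argument.
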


As a Corollary, the Schur positivity of the $\alpha$-chromatic symmetric function $\chi^{(\alpha)}_\pi[X;q]$ in terms of $\left\{\footnotesize{\begin{bmatrix}\alpha+k\\n\end{bmatrix}_q}\right\}_{0\le k \le n-1}$ follows from the Schur positivity of the usual chromatic symmetric function \cite{Gas96, ShWa16}. 
\begin{cor}\label{Cor: Schur positivity}
    If a symmetric function $f$ is Schur positive, then \(f\left[Q_\alpha X\right]\) also exhibits Schur positivity in the basis $\left\{\footnotesize{\begin{bmatrix}\alpha+k\\n\end{bmatrix}_q}\right\}_{0\le k \le n-1}$. In particular, the $\alpha$-chromatic symmetric function $\chi^{(\alpha)}_\pi[X;q]$ of an $n$-Dyck path $\pi$ is Schur positive in terms of aforementioned basis.
\end{cor}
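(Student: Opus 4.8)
The plan is to obtain Corollary~\ref{Cor: Schur positivity} as the linear closure of Proposition~\ref{Lemma: Schur positivity q^a-1/q-1}, which already settles the case of a single Schur function. Since the target basis $\left\{\begin{bmatrix}\alpha+k\\n\end{bmatrix}_q\right\}_{0\le k\le n-1}$ is attached to a fixed degree $n$, I would first split $f$ into homogeneous components and treat each separately, so assume $f$ is homogeneous of degree $n$. Schur positivity of $f$ then means $f=\sum_{\lambda\vdash n}a_\lambda(q)\,s_\lambda[X]$ with $a_\lambda(q)\in\mathbb{N}[q]$. The one point that needs care is that the operation $g\mapsto g[Q_\alpha X]$ is $\mathbb{Q}(q)$-linear: the plethystic substitution acts only on the alphabet $Q_\alpha X$ and fixes the scalar Schur coefficients $a_\lambda(q)$ (in particular the $q$ recording the statistic, which sits in the coefficients, is untouched). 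Hence $f[Q_\alpha X]=\sum_{\lambda\vdash n}a_\lambda(q)\,s_\lambda[Q_\alpha X]$.

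Next I would invoke Proposition~\ref{Lemma: Schur positivity q^a-1/q-1} to write, for each $\lambda\vdash n$,
\[
s_\lambda[Q_\alpha X]=\sum_{\mu\vdash n}\sum_{k=0}^{n-1}b^{\lambda}_{\mu,k}(q)\begin{bmatrix}\alpha+k\\n\end{bmatrix}_q s_\mu[X],\qquad b^{\lambda}_{\mu,k}(q)\in\mathbb{N}[q].
\]
Substituting and collecting gives
\[
f[Q_\alpha X]=\sum_{\mu\vdash n}\sum_{k=0}^{n-1}\Bigl(\sum_{\lambda\vdash n}a_\lambda(q)\,b^{\lambda}_{\mu,k}(q)\Bigr)\begin{bmatrix}\alpha+k\\n\end{bmatrix}_q s_\mu[X],
\]
and each coefficient $\sum_{\lambda}a_\lambda(q)\,b^{\lambda}_{\mu,k}(q)$ lies in $\mathbb{N}[q]$, being a sum of products of elements of $\mathbb{N}[q]$. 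This proves the first assertion.

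For the ``in particular'' statement, I would recall that $\chi^{(\alpha)}_\pi[X;q]=\chi_\pi[Q_\alpha X;q]$ by Definition~\ref{def:alpha_chromsym}, and that $\chi_\pi[X;q]$ is itself Schur positive with coefficients in $\mathbb{N}[q]$ by Gasharov~\cite{Gas96} and Shareshian--Wachs~\cite{ShWa16} (its Schur coefficients count weighted $P$-tableaux, exactly as used in the proof of Corollary~\ref{Cor: Schur positivity alpha N}). Applying the first part with $f=\chi_\pi[X;q]$, which is homogeneous of degree $n$, then yields the claim.

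Within the corollary itself there is essentially no obstacle beyond the linearity remark above; all the genuine work is contained in Proposition~\ref{Lemma: Schur positivity q^a-1/q-1}, which we assume. Were one to prove that proposition, the crux would be the positive expansion of a single $s_\lambda[Q_\alpha X]$ in the target basis. I would approach it by reducing (for integer $\alpha$, treating $Q_\alpha=1+q+\cdots+q^{\alpha-1}$ as a finite alphabet) to the principal specialization $s_\mu(1,q,\dots,q^{\alpha-1})$ via a product-of-alphabets expansion, expanding $s_\mu=\sum_{T\in\SYT(\mu)}F_{n,\operatorname{Des}(T)}$ by Gessel's fundamental quasisymmetric functions, and computing that each $F_{n,D}(1,q,\dots,q^{\alpha-1})$ equals a nonnegative power of $q$ times the single $q$-binomial $\begin{bmatrix}\alpha+n-1-|D|\\n\end{bmatrix}_q$; one then extends from integer to real $\alpha$ by polynomiality in $q^\alpha$. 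The delicate step there is controlling how the two alphabets interact so that the $q$-grading lands cleanly on a single binomial, which is precisely where the real difficulty of the whole result resides.
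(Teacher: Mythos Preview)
Your proof is correct and matches the paper's approach: the paper treats the corollary as immediate from Proposition~\ref{Lemma: Schur positivity q^a-1/q-1} together with the Schur positivity of $\chi_\pi[X;q]$ due to Gasharov and Shareshian--Wachs, without writing out a separate argument. Your additional sketch of how to prove the underlying proposition is not needed here, and in fact the paper proves it via the Kronecker-coefficient identity $s_\lambda[XY]=\sum_{\mu,\nu}g^\lambda_{\mu,\nu}s_\mu[X]s_\nu[Y]$ combined with the known formula~\eqref{eq: Schur principal specialization} for $s_\mu[Q_\alpha]$, rather than through the quasisymmetric expansion you outline.
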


\begin{proof}[Proof of Lemma~\ref{Lemma: Schur positivity q^a-1/q-1}]
Let us recall the following identity \cite[Eq (7.9)]{Mac1995}: for a partition $\lambda\vdash n$,
\begin{equation}\label{eq: s[XY] Kronecker}
    s_\lambda[XY]=\sum_{\mu,\nu \vdash n} g^{\lambda}_{\mu,\nu} s_\mu[X]s_\nu[Y],
\end{equation}
where $g^{\lambda}_{\mu,\nu}$ is the \emph{Kronecker coefficient}. Setting $X=Q_\alpha, Y=X$ in \eqref{eq: s[XY] Kronecker}, we obtain
\begin{equation*}\label{eq: s[q^a-1/q-1 X]}
    s_\lambda\left[Q_\alpha X\right] 
    = \sum_{\mu,\nu \vdash n} g^{\lambda}_{\mu,\nu} s_\mu\left[Q_\alpha\right]s_\nu[X].
\end{equation*}
Since the Kronecker coefficients $g^{\lambda}_{\mu,\nu}$ are nonnegative, it is enough to show the positivity of $s_\mu\left[Q_\alpha \right]$ in terms of the basis $\left\{\footnotesize{\begin{bmatrix}\alpha+k\\n\end{bmatrix}_q}\right\}_{0\le k \le n-1}$.

The principal specialization of Schur functions has the following combinatorial formula \cite[Proposition 7.19.12]{StaEC}: for a partition $\lambda\vdash n$,
\begin{equation}\label{eq: Schur principal specialization}
    s_\lambda\left[Q_\alpha \right]=\sum_{T\in\SYT(\lambda)}q^{\maj(T)}
    {\begin{bmatrix}\alpha+n-1-d(T)\\n\end{bmatrix}_q},    
\end{equation}
where $d(T)$ is the number of $i$ such that $i+1$ appears below $i$ in $T$, i.e. the \emph{descents} of the standard tableau $T$. This establishes positivity of $s_\lambda\left[Q_\alpha \right]$ in terms of the basis $\left\{\footnotesize{\begin{bmatrix}\alpha+k\\n\end{bmatrix}_q}\right\}_{0\le k \le n-1}$.

\end{proof}

It is noteworthy that since $\begin{bmatrix}\alpha+k\\n\end{bmatrix}_q$ can be positively expanded in the basis $\left\{\footnotesize{\begin{bmatrix}\alpha\\k\end{bmatrix}_q}\right\}_{1\le k \le n}$ for $0\le k\le n-1$, Corollary \ref{Cor: Schur positivity} also implies the Schur positivity in terms of the basis 
$\left\{\footnotesize{\begin{bmatrix}\alpha\\k\end{bmatrix}_q}\right\}_{1\le k \le n}$. However, 
$s_\lambda\left[Q_\alpha X\right]$ might not be integrally Schur positive in the $q$-falling factorial basis $[\alpha]^{\underline{k}}_q$. For example,  $s_{(2)}\left[Q_\alpha X\right]$ at $q=1$ is
    \[
      \left(\frac{1}{2} \alpha^2 - \frac{1}{2} \alpha\right)s_{(2)} + \left(\frac{1}{2}\alpha^2 + \frac{1}{2}\alpha\right)s_{(1,1)} = \frac{1}{2}(\alpha)^{\underline{2}} s_{(2)}
      +\left( \frac{1}{2}(\alpha)^{\underline{2}}+(\alpha)^{\underline{1}}  \right)s_{(1,1)},
    \]
   in which the Schur coefficients are not integral in terms of the falling factorial basis (even for $q=1$).
Nevertheless, we conjecture that this stronger Schur positivity in terms of the $q$-falling factorial basis holds for the $\alpha$-chromatic symmetric functions.
\begin{conj}\label{conj: falling factorial Schur positivity} 
    For an $n$-Dyck path $\pi$, the $\alpha$-chromatic symmetric function $\chi_\pi^{(\alpha)}[X;q]$ has a positive integral expansion in terms of 
    \(
    \left\{[\alpha]^{\underline{k}}_q s_\lambda \right\}_{1\le k \le n,\lambda\vdash n}.
    \)
    Moreover, the Schur expansion of \( \chi^{(\alpha)}_\pi[X;q] \) is of the  form
    \begin{equation}\label{eqn:LLTconj}
      \chi^{(\alpha)}_\pi[X;q]=\sum_{\lambda\vdash n}\left(  \sum_{T\in \SYT(\lambda')}
      q^{\text{stat}(T)}\text{PR}(B(T)) \right)s_{\lambda}[X]    \end{equation}
    for some \( q \)-power \( \text{stat}(T) \) depending on \( T\in \SYT(\lambda') \), where
     \( \text{PR}(B(T)) \) is the rook product
    \( \text{PR}(B(T))=\prod_{i=1}^n [\alpha +c_i -i +1] \) with the \( i \) th column height given by \( c_i \) for some board \( B(T) \).
  \end{conj}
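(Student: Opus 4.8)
The plan is to reduce the conjecture to a single decomposition statement and then to extract that decomposition from the plethystic description of $\chi^{(\alpha)}_\pi$. First I would observe that the second (stronger) claim implies the first: by the Garsia--Remmel factorizations (Theorems~\ref{thm: Garsia-Remmel} and~\ref{thm:GR}), any rook product satisfies $\text{PR}(B)=\sum_k\begin{bmatrix}\alpha+k\\n\end{bmatrix}_q h_k(B;q)=\sum_k[\alpha]^{\underline{k}}_q\,r_{n-k}(B;q)$ with $h_k(B;q),r_k(B;q)\in\mathbb{N}[q]$. Hence if the coefficient of $s_\lambda[X]$ can be written as $\sum_{T\in\SYT(\lambda')}q^{\mathrm{stat}(T)}\text{PR}(B(T))$, then it lies in the $\mathbb{N}[q]$-span of the $[\alpha]^{\underline{k}}_q$, which is exactly the integral falling-factorial Schur positivity asserted in the first part. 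So the whole content is to produce the $\SYT(\lambda')$-indexed rook-product decomposition in \eqref{eqn:LLTconj}.

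To set up that decomposition I would use the LLT plethysm. Writing $\LLT_\pi[X;q]=\sum_\mu \tilde c_{\pi,\mu}(q)\,s_\mu[X]$ with $\tilde c_{\pi,\mu}(q)\in\mathbb{N}[q]$ (Grojnowski--Haiman), Definition~\ref{def:alpha_chromsym} gives
\begin{equation*}
\chi^{(\alpha)}_\pi[X;q]=\sum_\mu \tilde c_{\pi,\mu}(q)\,\frac{s_\mu[(q^\alpha-1)X]}{(q-1)^n}.
\end{equation*}
The coefficient of $s_\lambda[X]$ is then a $\tilde c_{\pi,\mu}(q)$-weighted sum of the Schur-to-Schur plethysm coefficients $\langle s_\mu[(q^\alpha-1)X],s_\lambda[X]\rangle/(q-1)^n$, which by the Kronecker expansion \eqref{eq: s[XY] Kronecker} and the principal-specialization formula \eqref{eq: Schur principal specialization} is already a positive combination of the $\begin{bmatrix}\alpha+k\\n\end{bmatrix}_q$ (this is precisely Corollary~\ref{Cor: Schur positivity}). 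The task is to regroup this combination into $|\SYT(\lambda')|$ honest rook products. I would look for a board $B(T)$ whose column heights encode a content/descent sequence read off $T\in\SYT(\lambda')$, so that $\text{PR}(B(T))=\prod_i[\alpha+c_i-i+1]_q$ reproduces a content product of the kind governing principal specializations of Schur functions, together with a major-index-type statistic $\mathrm{stat}(T)$; and then construct a weight-preserving bijection between the objects computing the coefficient (via Theorem~\ref{thm: alpha+k choose n basis monomial expansion}, or the coloring/rook-placement correspondence in the remark following Theorem~\ref{thm: falling factorial basis}) and pairs consisting of a tableau $T$ together with a nonattacking rook placement on $B(T)$.

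As both a consistency check and a source of the correct model, I would specialize to $q=1$: Theorem~\ref{thm: falling factorial basis} already writes $\chi^{(\alpha)}_\pi[X;1]=\sum_k(\alpha)^{\underline{k}}\sum_{S\in\SSS(n,k)}\chi_{\beta(\pi,S)}[X;1]$, and combined with the Shareshian--Wachs Schur positivity of the ordinary $\chi_{\beta(\pi,S)}$ this is exactly the $q=1$ shadow of \eqref{eqn:LLTconj}, which pins down how the boards $B(T)$ must degenerate. One would then try to $q$-deform the set-partition/rook correspondence of that remark, tracking the statistic through the Garsia--Remmel $\inv_B$.

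The hard part will be the step involving the LLT Schur coefficients. There is at present no cancellation-free combinatorial formula for $\tilde c_{\pi,\mu}(q)$ — the long-standing open problem flagged in the introduction and linked to this conjecture through Proposition~\ref{prop: Schur coefficient of LLT} — and showing that the $\tilde c_{\pi,\mu}(q)$-weighted sum of principal specializations collapses into exactly $|\SYT(\lambda')|$ genuine rook products requires precisely the positivity-preserving control over those coefficients that is currently missing; in effect a full proof would be at least as strong as a combinatorial Schur rule for unicellular LLT polynomials. A realistic intermediate program is therefore to establish \eqref{eqn:LLTconj} first for families where the LLT Schur coefficients are explicit (the abelian, complete-graph, and staircase Dyck paths), using the rook model directly, and to attempt the weaker integral first part for general $\pi$ by a modular-law induction in the spirit of Appendix~\ref{Sec: appendix}, reducing an arbitrary path to boundary cases via the three-term relation while maintaining a rook-product expansion at each step.
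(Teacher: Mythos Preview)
This statement is labeled a \emph{conjecture} in the paper and is not proved there; the paper offers only supporting evidence (the complete-graph case in Proposition~\ref{prop:Kn}) and the observation in Remark~\ref{rem:1} that a proof of \eqref{eqn:LLTconj} would automatically yield a combinatorial Schur expansion of the unicellular LLT polynomials. So there is no ``paper's own proof'' to compare against.

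Your proposal is not a proof either, and you recognize this. Your reduction of the stronger claim to the weaker one via the Garsia--Remmel expansions is correct, and your diagnosis of the obstruction---that regrouping the Kronecker/principal-specialization expression into $|\SYT(\lambda')|$ genuine rook products would require combinatorial control over the LLT Schur coefficients $\tilde c_{\pi,\mu}(q)$---is exactly the point the paper makes in Remark~\ref{rem:1} and in Section~\ref{subsec: Schur expansion for unicellular LLT}. Your proposed intermediate program (verify \eqref{eqn:LLTconj} for the complete graph and other explicit families, and attempt the weaker first part via a modular-law reduction) is sensible and partly anticipated: the complete-graph case is already Proposition~\ref{prop:Kn}, where $B(T)$ is the content board and $\mathrm{stat}(T)$ is charge. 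The modular-law idea for the first part is not in the paper and would be a genuine contribution if it works, but you should be aware that the three-term relation mixes Dyck paths in a way that does not obviously preserve a rook-product decomposition of Schur coefficients, so the induction step is where the real work lies.
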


  \begin{remark}\label{rem:1}
  Recall the \( q \)-analogue of the Goldman-Joichi-White identity
  \[
    \prod_{i=1}^n [\alpha+c_i -i+1]_q =\sum_{k=0}^{n-1} \footnotesize{\begin{bmatrix}\alpha+k\\n\end{bmatrix}_q} h_k (B;q)
    = \sum_{k=1}^{n} [\alpha]_q^{\underline{k}} r_{n-k} (B;q)
  \]
  where $B$ is the board associated to the sequence $c_1,c_2,\dots,c_n$. By this formula,
  the Schur expansion in Conjecture \ref{conj: falling factorial Schur positivity} would also impliy
  the integral positivity in terms of both bases $\left\{\footnotesize{\begin{bmatrix}\alpha+k\\n\end{bmatrix}_q}\right\}_{0\le k \le n-1}$ and \(
    \left\{[\alpha]^{\underline{k}}_q \right\}_{1\le k \le n}\).
    Furthermore, if we let \( \alpha \) approaches infinity in the definition
    \[
    \chi_{\pi}^{(\alpha)} [X;q]=\frac{\LLT_{\pi}[(q^\alpha-1)X;q]}{(q-1)^n},  
    \]
    then we get
    \[
      \LLT_\pi [X;q]=\sum_{\lambda\vdash n}\sum_{T\in\SYT(\lambda)}q^{\text{stat}(T)}s_\lambda[X].
    \]
    Thus, figuring out the \( q \)-statistic in \eqref{eqn:LLTconj} will automatically give
    the Schur expansion of the unicellular LLT polynomials.
  \end{remark}
    As an evidence of Conjecture \ref{conj: falling factorial Schur positivity}, we have the following Schur expansion of \( \chi_\pi[X;q] \)
  when \( \pi=(n,n,\dots, n) \) corresponds to the complete graph. 
 \begin{prop}\label{prop:Kn}
\begin{equation}\label{eq:3}
  \chi_{(n,n,\dots,n)}^{\alpha}[X;q]=\sum_{\lambda\vdash n} \left( \sum_{T\in \SYT(\lambda)}q^{\text{ch}(T)}
  \prod_{u\in \lambda}[\alpha-c(u)]_q \right) s_{\lambda}[X],
\end{equation}
where \( \text{ch}(T) \) is the charge statistic of \( T \).
\end{prop}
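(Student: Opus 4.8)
The plan is to reduce everything to two classical facts: the value of the chromatic symmetric function of the complete graph and the principal specialization of Schur functions. First I would observe that for $\pi=(n,\dots,n)$ the graph $G(\pi)$ is the complete graph $K_n$, so a coloring $w$ is proper exactly when all $w_i$ are distinct, and every pair $i<j$ is an attacking pair, whence $\inv_\pi(w)=\inv(w)$ is the ordinary inversion number. Grouping proper colorings by their underlying $n$-element set of colors and summing $q^{\inv(w)}$ over the rearrangements of each set (which contributes $\sum_{\sigma\in\mathfrak{S}_n}q^{\inv(\sigma)}=[n]_q!$) gives
\[
\chi_{(n,\dots,n)}[X;q]=[n]_q!\,e_n[X]=[n]_q!\,s_{(1^n)}[X].
\]
Applying the defining plethysm $\chi^{(\alpha)}_\pi[X;q]=\chi_\pi[Q_\alpha X;q]$ then yields $\chi^{(\alpha)}_{(n,\dots,n)}[X;q]=[n]_q!\,e_n[Q_\alpha X]$.

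Next I would expand $e_n[Q_\alpha X]$ into Schur functions via the dual Cauchy identity $e_n[XY]=\sum_{\mu\vdash n}s_{\mu}[X]\,s_{\mu'}[Y]$, which is exactly \eqref{eq: s[XY] Kronecker} for $\lambda=(1^n)$ since $g^{(1^n)}_{\mu\nu}=\delta_{\nu,\mu'}$. Substituting $X=Q_\alpha$, $Y=X$ and reindexing $\mu\mapsto\mu'$ gives
\[
\chi^{(\alpha)}_{(n,\dots,n)}[X;q]=[n]_q!\sum_{\mu\vdash n}s_{\mu'}[Q_\alpha]\,s_\mu[X],
\]
so it remains to identify the coefficient $[n]_q!\,s_{\mu'}[Q_\alpha]$ with $\bigl(\sum_{T\in\SYT(\mu)}q^{\operatorname{ch}(T)}\bigr)\prod_{u\in\mu}[\alpha-c(u)]_q$.

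For this I would invoke the $q$-content (principal specialization) formula $s_\nu[Q_\alpha]=q^{n(\nu)}\prod_{u\in\nu}\frac{[\alpha+c(u)]_q}{[h(u)]_q}$, where $c(u)$ and $h(u)$ are the content and hook length of $u$ and $n(\nu)=\sum_i(i-1)\nu_i$; this holds first for $\alpha\in\mathbb N$ and then for real $\alpha$ because both sides are rational in $t=q^\alpha$. Taking $\nu=\mu'$ and using that conjugation negates the multiset of contents while preserving the multiset of hook lengths, this becomes $s_{\mu'}[Q_\alpha]=q^{n(\mu')}\prod_{u\in\mu}\frac{[\alpha-c(u)]_q}{[h(u)]_q}$. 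Finally I would apply the charge version of the $q$-hook length formula,
\[
\sum_{T\in\SYT(\mu)}q^{\operatorname{ch}(T)}=q^{n(\mu')}\frac{[n]_q!}{\prod_{u\in\mu}[h(u)]_q},
\]
which converts $[n]_q!\,s_{\mu'}[Q_\alpha]$ into the claimed coefficient and finishes the proof after renaming $\mu$ to $\lambda$.

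The main obstacle is the last identity: pinning down the charge-theoretic hook length formula with the correct power of $q$ and, crucially, matching conventions. The familiar major-index formula $\sum_T q^{\maj(T)}=q^{n(\mu)}[n]_q!/\prod_u[h(u)]_q$ carries the exponent $n(\mu)$, whereas the identity we need carries $n(\mu')$; this discrepancy is precisely the transpose introduced by the dual Cauchy step, and it is accounted for because the charge generating function over standard tableaux is the Kostka--Foulkes polynomial $K_{\mu,(1^n)}(q)$, the conjugate-type companion of $\maj$. I would therefore verify the convention for $\operatorname{ch}$ (reading-word direction and the charge algorithm) on small shapes such as $(2)$, $(1,1)$ and $(2,1)$ to confirm the exponent is indeed $n(\mu')$ and not $n(\mu)$, and cite the standard Lascoux--Schützenberger theory for the general statement; the remaining manipulations are routine.
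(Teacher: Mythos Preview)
Your argument is correct and complete. The identity $\sum_{T\in\SYT(\mu)}q^{\operatorname{ch}(T)}=K_{\mu,(1^n)}(q)=q^{n(\mu')}[n]_q!/\prod_{u\in\mu}[h(u)]_q$ is standard (Macdonald, Ex.~III.6.2, combined with Lascoux--Sch\"utzenberger), and your small-shape checks confirm the conventions line up; once that is in hand the rest is routine.

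Your route is genuinely different from the paper's. The paper identifies $\LLT_{(n^n)}[X;q]$ with the modified Macdonald polynomial $H_{(n)}[X;q,t]=J_{(n)}[X/(1-t);q,t]$, substitutes $X\mapsto (t-1)X$ to collapse the plethysm, and then invokes the known Schur expansion of the one-row integral Macdonald polynomial $J_{(n)}$ (which already packages the charge statistic). You instead stay entirely in the chromatic world: compute $\chi_{K_n}[X;q]=[n]_q!\,e_n[X]$ directly, apply dual Cauchy, and reduce to the principal specialization of Schur functions plus the charge $q$-hook length formula. Your approach is more elementary in that it avoids Macdonald machinery altogether, relying only on classical Schur-function identities; the paper's approach has the advantage of making transparent why the charge statistic appears (it is inherited from the $J_{(n)}$ expansion), and it fits the broader narrative of the paper linking $\alpha$-chromatic symmetric functions to the $J\to\text{Jack}$ analogy. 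Ultimately both proofs hinge on the same Lascoux--Sch\"utzenberger fact, just invoked at different stages.
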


\begin{proof}
  Note that \( \LLT_{(n^n)}[X;q]=H_{(n)}(X;q,t) = J_{(n)}\left[ \frac{X}{1-t};q,t \right ]\), where \( H_\lambda (X;q,t) \) is the \emph{modified Macdonald polynomials} and \( J_\lambda (X;q,t) \) is the \emph{integral form Macdonald polynomials} (see \cite{Hag2008} for the details). 
\begin{align*}
 \LLT_{(n^n)}[(t-1)X;q] &= H_{(n)}[X(t-1);q,t]  \\
                        &= J_{(n)}\left[ \frac{X(t-1)}{1-t};q,t \right ]= J_{(n)}[-X;q,t]\\
                        &= \sum_{\lambda\vdash n}\prod_{u\in \lambda}(1-q^{a'(u)-l'(u)}t)\left( \sum_{T\in\SYT(\lambda')}q^{\text{ch}(T)} \right)s_{\lambda}[-X]\\
  &=\sum_{\lambda\vdash n}\prod_{u\in\lambda}(q^{l'(u)-a'(u)}t-1)\left(  \sum_{T\in\SYT(\lambda)}q^{\text{ch}(T)} \right) s_{\lambda}[X].
\end{align*}
So we have,
\begin{align*}
  \chi_{(n^n)}^\alpha (X;q)&=   \sum_{\lambda\vdash n}\prod_{u\in\lambda}\frac{(q^{\alpha-(a'(u)-l'(u))}t-1)}{(q-1)^n}\left(  \sum_{T\in\SYT(\lambda)}q^{\text{ch}(T)} \right) s_{\lambda}[X]
  \\&=\sum_{\lambda\vdash n}\prod_{u\in\lambda}[\alpha-(a'(u)-l'(u))]_q\left( \sum_{T\in\SYT(\lambda)}q^{\text{ch}(T)} \right)s_{\lambda}[X]. 
\end{align*}
\end{proof}

\subsection{Symmetry for $\alpha$-chromatic symmetric functions} 

We present a symmetry relation for the Schur expansion of the $\alpha$-chromatic symmetric functions. 

\begin{prop} For an $n$-Dyck path $\pi$, if we let 
\[
    \chi^{(\alpha)}_\pi[X;q] = \sum_{k=0}^{n-1}\sum_{\lambda\vdash n} c_{\pi,\lambda,k} (q) \footnotesize{\begin{bmatrix}\alpha+k\\n\end{bmatrix}_q} s_\lambda[X],
\]
then 
$$c_{\pi,\lambda,k}(q) = q^{\area(\pi)+\binom{n}{2}}c_{\pi,\lambda',n-1-k}(q^{-1}).$$
\end{prop}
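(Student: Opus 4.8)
The plan is to reduce the desired symmetry to three essentially independent ingredients: the palindromicity of the ordinary chromatic symmetric function $\chi_\pi[X;q]$, a transpose symmetry of the principal specialization $s_\rho[Q_\alpha]$, and the conjugation symmetry of Kronecker coefficients. First I would expand $\chi_\pi[X;q]=\sum_{\mu\vdash n}b_{\pi,\mu}(q)\,s_\mu[X]$ and substitute $X\mapsto Q_\alpha X$, so that $\chi^{(\alpha)}_\pi[X;q]=\sum_\mu b_{\pi,\mu}(q)\,s_\mu[Q_\alpha X]$. Applying \eqref{eq: s[XY] Kronecker} with $X=Q_\alpha$ and $Y=X$ gives $s_\mu[Q_\alpha X]=\sum_{\rho,\nu}g^\mu_{\rho,\nu}\,s_\rho[Q_\alpha]\,s_\nu[X]$, and the principal specialization formula \eqref{eq: Schur principal specialization} expands $s_\rho[Q_\alpha]=\sum_{k=0}^{n-1}a_{\rho,k}(q)\footnotesize{\begin{bmatrix}\alpha+k\\n\end{bmatrix}_q}$ with $a_{\rho,k}(q)=\sum_{T\in\SYT(\rho),\,d(T)=n-1-k}q^{\maj(T)}$. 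Since $\{\footnotesize{\begin{bmatrix}\alpha+k\\n\end{bmatrix}_q}s_\lambda\}$ is a basis, matching the coefficient of $\footnotesize{\begin{bmatrix}\alpha+k\\n\end{bmatrix}_q}s_\lambda[X]$ is legitimate and yields the closed form
\[
c_{\pi,\lambda,k}(q)=\sum_{\mu\vdash n}b_{\pi,\mu}(q)\sum_{\rho\vdash n}g^\mu_{\rho,\lambda}\,a_{\rho,k}(q),
\]
on which all subsequent manipulations act.

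Next I would record the three symmetries. First, the transpose bijection $\SYT(\rho)\to\SYT(\rho')$ complements descent sets, $\operatorname{Des}(T')=[n-1]\setminus\operatorname{Des}(T)$, hence $d(T')=n-1-d(T)$ and $\maj(T')=\binom{n}{2}-\maj(T)$; this immediately gives $a_{\rho,k}(q)=q^{\binom{n}{2}}a_{\rho',\,n-1-k}(q^{-1})$. Second, by the palindromicity of the chromatic quasisymmetric function of a natural unit interval order \cite{ShWa16}, $\chi_\pi[X;q]=q^{\area(\pi)}\chi_\pi[X;q^{-1}]$, and since the Schur functions carry no $q$ this passes to each coefficient: $b_{\pi,\mu}(q)=q^{\area(\pi)}b_{\pi,\mu}(q^{-1})$. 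Third, the Kronecker coefficient is invariant under conjugating any single one of its three arguments, so in particular $g^\mu_{\rho',\lambda}=g^\mu_{\rho,\lambda'}$.

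Then I would substitute the first symmetry into the closed form, reindex the inner sum by $\rho\mapsto\rho'$ (a bijection on partitions of $n$), apply the Kronecker symmetry to turn $g^\mu_{\rho',\lambda}$ into $g^\mu_{\rho,\lambda'}$, and finally apply palindromicity to replace $b_{\pi,\mu}(q)$ by $q^{\area(\pi)}b_{\pi,\mu}(q^{-1})$:
\begin{align*}
c_{\pi,\lambda,k}(q)&=q^{\binom{n}{2}}\sum_\mu b_{\pi,\mu}(q)\sum_\rho g^\mu_{\rho',\lambda}\,a_{\rho,\,n-1-k}(q^{-1})\\
&=q^{\area(\pi)+\binom{n}{2}}\sum_\mu b_{\pi,\mu}(q^{-1})\sum_\rho g^\mu_{\rho,\lambda'}\,a_{\rho,\,n-1-k}(q^{-1}).
\end{align*}
The remaining double sum is exactly the closed form read off with $(\lambda,k,q)$ replaced by $(\lambda',\,n-1-k,\,q^{-1})$, namely $c_{\pi,\lambda',\,n-1-k}(q^{-1})$, which gives $c_{\pi,\lambda,k}(q)=q^{\area(\pi)+\binom{n}{2}}c_{\pi,\lambda',\,n-1-k}(q^{-1})$ as claimed.

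The main obstacle is bookkeeping rather than a new idea: I must make sure each of the three "$q\mapsto q^{-1}$" symmetries has its prefactor recorded on the correct side and that the reindexing $\rho\mapsto\rho'$ together with the Kronecker symmetry genuinely converts the index $\lambda$ into $\lambda'$ without disturbing $\mu$. The single most delicate point is the transpose identity $a_{\rho,k}(q)=q^{\binom{n}{2}}a_{\rho',\,n-1-k}(q^{-1})$, where one must simultaneously track the descent-count complementation $k\mapsto n-1-k$ and the major-index reflection $\maj\mapsto\binom{n}{2}-\maj$; once this is in hand, palindromicity of $\chi_\pi$ supplies the extra $q^{\area(\pi)}$ and the two factors combine to the stated $q^{\area(\pi)+\binom{n}{2}}$.
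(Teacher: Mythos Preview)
Your argument is correct and reaches the identity, but by a genuinely different route from the paper. The paper works globally: it invokes the LLT symmetry $\omega(\LLT_\pi[X;q])=q^{\area(\pi)}\LLT_\pi[X;q^{-1}]$, performs the plethystic substitution $X\mapsto(q^\alpha-1)X$ to obtain $\omega\chi_\pi^{(\alpha)}[X;q]=(-1)^n q^{\area(\pi)-n}\chi_\pi^{(-\alpha)}[X;q^{-1}]$, and then finishes with the $q$-binomial reflection $\footnotesize{\begin{bmatrix}-\alpha+k\\n\end{bmatrix}_q}=(-1)^n q^{\binom{n+1}{2}}\footnotesize{\begin{bmatrix}\alpha+n-1-k\\n\end{bmatrix}_q}$. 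You instead dismantle $\chi_\pi^{(\alpha)}$ into its Schur constituents via \eqref{eq: s[XY] Kronecker} and \eqref{eq: Schur principal specialization} and apply three classical symmetries to the pieces: palindromicity of $\chi_\pi[X;q]$, the tableau transposition identity $a_{\rho,k}(q)=q^{\binom{n}{2}}a_{\rho',n-1-k}(q^{-1})$, and the Kronecker conjugation symmetry. Your approach is more elementary---it never mentions LLT polynomials or the $\alpha\mapsto-\alpha$ trick---and makes transparent exactly which factor contributes $q^{\area(\pi)}$ and which contributes $q^{\binom{n}{2}}$; the paper's approach is shorter and handles the $\omega$ and the $q$-inversion in one stroke at the symmetric-function level.

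One wording slip: the Kronecker coefficient is \emph{not} invariant under conjugating a single one of its three arguments; rather, it is invariant under conjugating any \emph{two} simultaneously (equivalently, $s_{\rho'}*s_\lambda=s_\rho*s_{\lambda'}$). This is precisely the identity $g^\mu_{\rho',\lambda}=g^\mu_{\rho,\lambda'}$ you actually use, so the computation is unaffected, but the justification sentence should be corrected.
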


\begin{proof}
\cite[Proposition 4.1.4]{BHMPS} states that for an $n$-Dyck path $\pi$, 
 \[
    \omega(\LLT_\pi [X;q]) = q^{\area(\pi)}\LLT_\pi[X;q^{-1}].
 \]
Taking the plethystic substitution $X\mapsto (q^\alpha-1)X$, we get
\begin{align*}
\omega\LLT_{\pi}[(q^{\alpha}-1)X;q] = q^{\area(\pi)}\LLT_{\pi}[((q^{-1})^{(-\alpha)}-1)X;q^{-1}].
\end{align*}
By applying Proposition~\ref{prop: plethystic relation between X and LLT Carlsson Mellit} and dividing both sides by $(q-1)^n$, we have
\begin{align*}
 \omega \chi_{\pi}^{(\alpha)}[X;q] 
 = \frac{ q^{\area(\pi)}\LLT_{\pi}[((q^{-1})^{(-\alpha)}-1)X;q^{-1}]}{(-q)^n(q^{-1}-1)^n}
  &= (-1)^n q^{\area(\pi)-n}\chi_\pi^{(-\alpha)}[X;q^{-1}].
\end{align*}
In the right-hand side of the last equation, applying 
\[
    \footnotesize{\begin{bmatrix}-\alpha+k\\n\end{bmatrix}_q} = (-1)^n q^{\binom{n+1}{2}} \footnotesize{\begin{bmatrix}\alpha+n-1-k\\n\end{bmatrix}_q},
\]
yields the conclusion.

\end{proof}

\section{Applications to $q$-rook theory}
\label{Sec:qHitpoly}

For an $n$-Dyck path $\pi$, let $c_i(\pi) = i-1-b_i(\pi)$, where $b_i(\pi)$ is the number of cells below $\pi$ and strictly above the diagonal $y=x$, 
in the $i$th column, as defined in Proposition \ref{RProd}.  Note that $c_i(\pi)$ is the number of squares   
above $\pi$ in $[n]\times [n]$, in the $i$th column from the right, so if we rotate the $[n]\times [n]$-grid $180$ degrees counterclockwise, the squares originally above $\pi$ become the 
Ferrers board $B_{\pi} =B(c_1(\pi),c_2(\pi),\ldots ,c_n(\pi))$.  For example, for $\pi =(2,4,4,6,6,7,7)$, $\pi$ and the relations of the poset $P_{\pi}$ are given on the left in Figure \ref{Rotate}.  On the right we have the board $B_{\pi}=B(c_1(\pi),c_2(\pi),\ldots ,c_n(\pi))=(0,0,1,1,3,3,5)$, with the squares of $B_{\pi}$ labelled according to the Cartesian $(\text{column},\text{row})$ coordinates.  By Proposition \ref{RProd} we will have a solution to the Garsia-Remmel $q$-hit problem if we can find a combinatorial interpretation for the coefficients
in the expansion of $\chi _{\pi}[Q_{\alpha};q]$ into the  
$ \footnotesize{\begin{bmatrix}\alpha+k\\n\end{bmatrix}_q}$ basis.

\begin{figure}[!ht]
\begin{tikzpicture}[scale=.6]
 \foreach \i in {0,...,7} {
 \draw[color=gray] (0,\i)--(7,\i);
  \draw[color=gray] (\i,0)--(\i,7);
  }
  \foreach \i in {1,...,7}{
\node () at (-.5, 7-\i+.5) {$\i$};
\node () at (7-\i+.5, 7.5) {$\i$};
}
 \draw [very thick] (0,2)--(1,2)--(1,4)--(3,4)--(3,6)--(5,6)--(5,7)--(0,7)--(0,2)--cycle;
 \node () at (.5, 2.5) {\tiny$(7,5)$};
  \node () at (.5, 3.5) {\tiny$(7,4)$};
   \node () at (.5, 4.5) {\tiny$(7,3)$};
    \node () at (.5, 5.5) {\tiny$(7,2)$};
     \node () at (.5, 6.5) {\tiny$(7,1)$};
   \node () at (1.5, 4.5) {\tiny$(6,3)$};
    \node () at (1.5, 5.5) {\tiny$(6,2)$};
     \node () at (1.5, 6.5) {\tiny$(6,1)$};  
   \node () at (2.5, 4.5) {\tiny$(5,3)$};
    \node () at (2.5, 5.5) {\tiny$(5,2)$};
     \node () at (2.5, 6.5) {\tiny$(5,1)$};
        \node () at (3.5, 6.5) {\tiny$(4,1)$};
            \node () at (4.5, 6.5) {\tiny$(3,1)$};
\end{tikzpicture}
\qquad\qquad\quad
\begin{tikzpicture}[scale=.6]
 \foreach \i in {0,...,7} {
 \draw[color=gray] (0,\i)--(7,\i);
  \draw[color=gray] (\i,0)--(\i,7);
  }
  \foreach \i in {1,...,7}{
\node () at (7.5,\i-.5) {$\i$};
\node () at (\i-.5, 7.5) {$\i$};
}
\draw [very thick] (2,0)--(2,1)--(4,1)--(4,3)--(6,3)--(6,5)--(7,5)--(7,0)--(2,0)--cycle;
       \node () at (2.5, .5) {\tiny$(3,1)$};
            \node () at (3.5, .5) {\tiny$(4,1)$};
                 \node () at (4.5, .5) {\tiny$(5,1)$};
                      \node () at (5.5, .5) {\tiny$(6,1)$};
                           \node () at (6.5, .5) {\tiny$(7,1)$};
                            \node () at (4.5, 1.5) {\tiny$(5,2)$};
                      \node () at (5.5, 1.5) {\tiny$(6,2)$};
                           \node () at (6.5, 1.5) {\tiny$(7,2)$};
                            \node () at (4.5, 2.5) {\tiny$(5,3)$};
                      \node () at (5.5, 2.5) {\tiny$(6,3)$};
                           \node () at (6.5, 2.5) {\tiny$(7,3)$};
                            \node () at (6.5, 3.5) {\tiny$(7,4)$};
                             \node () at (6.5, 4.5) {\tiny$(7,5)$};
  \end{tikzpicture}
  \caption{On the right, the Ferrers board $B_{\pi}=B(c_1(\pi),\ldots ,c_n(\pi)) = B(0,0,1,1,3,3,5)$ for $\pi =(2,4,4,6,6,7,7)$, with the relations in the poset $P_{\pi}$ appearing above $\pi$ on the left}
\label{Rotate}
\end{figure}

For $\alpha \in \mathbb N$ and $D\subseteq [n-1]$, a basic identity involving the Gessel's fundamental quasisymmetric function $F_{D}$ is
\cite[Ch. 7]{Stanley2} 
\begin{align}
\label{Fidentity}
F_{D}[Q_\alpha]=  q^{n|D| -\sum_{i \in D} i} \footnotesize{\begin{bmatrix}\alpha+n-1-|D| \\n\end{bmatrix}_q}.
\end{align}
The result of Shareshian and Wachs \cite{ShWa16} can be phrased as
\begin{align}
\label{Fexp}
\chi _{\pi}[X;q] = \sum_{\sigma \in \mathfrak{S}_n} 
q^{ \inv_{\pi} (\sigma ^{-1}) } F_{ \text{PDes}^c_\pi(\sigma)}[X],
\end{align}
where $\text{PDes}_\pi(\sigma)$ is the set of $i$ for which $\sigma_i > \sigma_{i+1}$ in $P_{\pi}$, and $\text{PDes}^c_\pi(\sigma)$ is its complement
 in $[n-1]$.
Setting $X=Q_{\alpha}$ in (\ref{Fexp}) and using (\ref{Fidentity}) we get
\begin{align}
\label{QFexp}
\chi _{\pi}[Q_{\alpha};q] = \sum_{\sigma \in \mathfrak{S}_n} 
q^{\stat_{B_{\pi}}(\sigma)}
 \footnotesize{\begin{bmatrix}\alpha+|PDes_\pi(\sigma)| \\n\end{bmatrix}_q},
 \end{align}
 where 
 \begin{align}
 \label{hitstat}
 \stat_{B_{\pi}}(\sigma) = \inv_{\pi}(\sigma ^{-1}) + n(n-1)-n\left|\text{PDes}_\pi(\sigma)\right| -\sum_{i: \sigma_i \notin \text{PDes}_\pi(\sigma) }i.
 \end{align}
\noindent
 It is worth noting that \eqref{QFexp} can also be derived from Theorem~\ref{thm: alpha+k choose n basis monomial expansion} and Proposition~\ref{prop: q-chromatic}. 

 There is a standard way \cite{Riordan} to identify a permutation $\sigma \in \mathfrak{S}_n$ having $\text{PDes}_\pi(\sigma)=k$ with a placement $L_{\sigma}$ of $n$ rooks on 
  $[n]\times [n]$ with exactly $k$ rooks on $B_{\pi}$.  Start with $\sigma _1 \cdots \sigma _n$ and create a permutation $\beta (\sigma)$ by viewing the
  right-to-left minima of $\sigma$ (i.e. the values of $i$ for which $\sigma_i < \sigma_j$ for $i<j$) as defining the rightmost element in the cycles in $\beta (\sigma)$.  For example,
  if $\sigma = 52416837$, then $\beta (\sigma) = (5241)(6843)(7)$.  Now let $L_{\sigma}$ be the placement of rooks on all squares $(i,j)$ where $i$ immediately
  follows $j$ in a cycle of $\beta(\sigma)$.  For this case, we get rooks on squares in  $(\text{column},\text{row})$-coordinates
  \begin{align}
  (5,2),(2,4),(4,1),(1,5),(6,8),(8,4),(4,3),(3,8),(7,7).
  \end{align}
 Since all of $P_{\pi}$ are contained below the diagonal $y=x$, $P_{\pi}$-descents of $\sigma$ are in bijection with rooks in $L_{\sigma}$ on squares of $B_{\pi}$.

\begin{exam}
    In this example we show how to compute the statistic 
    $\text{stat}_{B_{\pi}}(\sigma)$ for the rook placement $L$ on the left in Figure \ref{fig:rooksinv}, where the red dots are rooks.   Here
    $B_{\pi} = B(0,0,1,2,2,3,5,6)$, where $\pi$ is the path forming the boundary of the Ferrers board.  The placement $L$ corresponds under the above map to $\sigma = 52416837$, which has $P$-descents at $1,3,6$.  To compute $\text{inv}_{\pi}(\sigma ^{-1})$, we place the elements of $\sigma ^{-1}$ along the diagonal, as on the right in the figure.  The blue dots correspond to inversion pairs,  so
    $\text{inv}_{\pi}(\sigma ^{-1}) = 5$.  Using this, from (\ref{hitstat}) we have
    \begin{align*}
    \text{stat}_{B_{\pi}}(\sigma) = 5+8\cdot 7 -8\cdot 3-(2+4+5+7) = 19.
    \end{align*}

\begin{figure}[!ht]
\begin{tikzpicture}[scale=.6]
 \foreach \i in {0,...,8} {
 \draw[color=gray] (0,\i)--(8,\i);
  \draw[color=gray] (\i,0)--(\i,8);
  }
  \foreach \i in {1,...,8}{
\node () at (-.5, \i-.5) {$\i$};
\node () at (\i-.5, 8.5) {$\i$};
}
 \draw [line width=0.5mm] (2,0)--(2,1)--(3,1)--(3,2)--(5,2)--(5,3)--(6,3)--(6,5)--(7,5)--(7,6)--(8,6)--(8,0)--(2,0)--cycle;
 \filldraw[color=red] (.5, 4.5) circle (6pt)
                     (1.5, 3.5) circle (6pt)
                     (2.5, 5.5) circle (6pt)
                     (3.5, .5) circle (6pt)
                     (4.5, 1.5) circle (6pt)
                     (5.5, 7.5) circle (6pt)
                     (6.5, 6.5) circle (6pt)
                     (7.5, 2.5) circle (6pt);
\end{tikzpicture}
\qquad\quad
\begin{tikzpicture}[scale=.6]
 \foreach \i in {0,...,8} {
 \draw[color=gray] (0,\i)--(8,\i);
  \draw[color=gray] (\i,0)--(\i,8);
  }
  \foreach \i in {1,...,8}{
\node () at (-.5, \i-.5) {$\i$};
\node () at (\i-.5, 8.5) {$\i$};
}
 \draw [line width=0.5mm] (0,0)--(0,2)--(1,2)--(1,3)--(2,3)--(2,5)--(3,5)--(3,6)--(5,6)--(5,7)--(6,7)--(6,8)--(8,8);
 \filldraw[color=blue] (.5, 1.5) circle (6pt)
                     (3.5, 4.5) circle (6pt)
                     (3.5, 5.5) circle (6pt)
                     (4.5, 5.5) circle (6pt)
                     (6.5, 7.5) circle (6pt);
\node () at (.5, .5) {$6$};
\node () at (1.5, 1.5) {$8$};
\node () at (2.5, 2.5) {$5$};
\node () at (3.5, 3.5) {$1$};
\node () at (4.5, 4.5) {$3$};
\node () at (5.5, 5.5) {$7$};
\node () at (6.5, 6.5) {$2$};
\node () at (7.5, 7.5) {$4$};
\end{tikzpicture}
 \caption{A placement of rooks $L_{\sigma}$ on the left. Here $\sigma=52416837$, with $\text{inv}_{\pi}(\sigma ^{-1})=5$, corresponding to the blue dots on the right.}\label{fig:rooksinv}
\end{figure}
  
\end{exam}

Note that our argument expressing 
values of the $q$-hit polynomial of $B_{\pi}$ in terms of $\text{inv}_{\pi}(\sigma ^{-1})$ depends only 
(\ref{Fidentity}) and (\ref{Fexp}), and any way of replacing 
$\text{inv}(\sigma ^{-1})$ in (\ref{Fexp}) by a different statistic which still makes the equation true would also give a solution to the Garsia-Remmel $q$-hit problem for the board $B_{\pi}$.  Thus we have the following.
\begin{thm}
Let $B_{\pi}$ be the Ferrers board corresponding to an n-Dyck path $\pi$. Assume that  
\begin{align}
\label{GenFexp}
\chi _{\pi}[X;q] = \sum_{\sigma \in \mathfrak{S}_n} 
q^{ \text{qstat}_{\pi} (\sigma) } F_{ \text{PDes}^c_\pi(\sigma)}[X],
\end{align}
where $\text{qstat}_{\pi}(\sigma)$ is some function from $\mathfrak{S}_n$ to $\mathbb N$ depending on $\pi$.  
Then for $0\le k \le n$,
\begin{align}
    h_k(B_{\pi};q) = 
    \sum_{\sigma \in \mathfrak{S}_n \atop 
    \text{$\beta (\sigma)$ has $k$ rooks on $B_{\pi}$} }
q^{\text{hitstat}_{\pi}(\sigma)},
\end{align}
where
\begin{align}
\label{story}
\text{hitstat}_{\pi}(\sigma) =
    \text{qstat}_{\pi}(\sigma) + n(n-1)-n\left|\text{PDes}_\pi(\sigma)\right| -\sum_{i: \sigma_i \notin \text{PDes}_\pi(\sigma) }i.
\end{align}
\end{thm}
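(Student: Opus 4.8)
The plan is to exploit the fact that the passage from an $F$-expansion of $\chi_\pi[X;q]$ to the $q$-hit polynomial of $B_\pi$ is statistic-agnostic: the derivation of \eqref{QFexp} from \eqref{Fexp} used nothing about $\inv_\pi(\sigma^{-1})$ except that it is the exponent of $q$ attached to $F_{\text{PDes}^c_\pi(\sigma)}$. First I would repeat that derivation verbatim with the abstract statistic $\text{qstat}_\pi$ in place of $\inv_\pi(\sigma^{-1})$. Starting from the hypothesis \eqref{GenFexp}, I set $X=Q_\alpha$ and apply the Gessel specialization \eqref{Fidentity} to each term. Using $|\text{PDes}^c_\pi(\sigma)| = n-1-|\text{PDes}_\pi(\sigma)|$ and $\sum_{i\in\text{PDes}^c_\pi(\sigma)}i=\binom{n}{2}-\sum_{i\in\text{PDes}_\pi(\sigma)}i$, the $q$-binomial produced is $\footnotesize{\begin{bmatrix}\alpha+|\text{PDes}_\pi(\sigma)|\\n\end{bmatrix}_q}$ and the accumulated exponent of $q$ is exactly $\text{hitstat}_\pi(\sigma)$ as defined in \eqref{story}, yielding
\[
\chi_\pi[Q_\alpha;q]=\sum_{k=0}^{n-1}\Bigg(\sum_{\substack{\sigma\in\mathfrak{S}_n\\ |\text{PDes}_\pi(\sigma)|=k}}q^{\text{hitstat}_\pi(\sigma)}\Bigg)\footnotesize{\begin{bmatrix}\alpha+k\\n\end{bmatrix}_q}.
\]

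Next I would extract coefficients. The key observation is that grouping the terms of \eqref{GenFexp} by the set $D=\text{PDes}^c_\pi(\sigma)$ shows that $\sum_{\text{PDes}^c_\pi(\sigma)=D}q^{\text{qstat}_\pi(\sigma)}$ is simply the coefficient of $F_D$ in the fixed function $\chi_\pi[X;q]$; hence the coefficient of each $\footnotesize{\begin{bmatrix}\alpha+k\\n\end{bmatrix}_q}$ above is intrinsic to $\chi_\pi$ and is independent of the chosen $\text{qstat}_\pi$. Since $\left\{\footnotesize{\begin{bmatrix}\alpha+k\\n\end{bmatrix}_q}\right\}_{0\le k\le n-1}$ is linearly independent (as in Section~\ref{Sec: binomial alpha+k choose n basis expansion}, it suffices to evaluate at $n$ distinct integer values of $\alpha$), I may therefore compute that coefficient in the single known case $\text{qstat}_\pi(\sigma)=\inv_\pi(\sigma^{-1})$. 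By Proposition~\ref{RProd} together with the Goldman--Joichi--White/Dworkin--Haglund expansion of the rook product $\prod_{i=1}^n[\alpha+c_i(\pi)-(i-1)]_q$, that coefficient equals $q^{\area(\pi)}h_k(B_\pi;q)$. Comparing with the display above gives $\sum_{|\text{PDes}_\pi(\sigma)|=k}q^{\text{hitstat}_\pi(\sigma)}=q^{\area(\pi)}h_k(B_\pi;q)$ for every $k$. Finally, the bijection $\sigma\mapsto\beta(\sigma)$ recalled before the theorem carries permutations with $|\text{PDes}_\pi(\sigma)|=k$ to rook placements $L_\sigma$ having exactly $k$ rooks on $B_\pi$ (because every relation of $P_\pi$ sits below the diagonal), which reindexes the sum as in the statement.

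The hard part is bookkeeping rather than conceptual: one must track the prefactor $q^{\area(\pi)}$ supplied by Proposition~\ref{RProd}, since $\chi_\pi[Q_\alpha;q]$ equals $q^{\area(\pi)}$ times the rook product $\text{PR}(B_\pi)$ and not that product itself. Cancelling this factor is what forces the term $-\area(\pi)$ to enter the exponent; equivalently, the clean identity $h_k(B_\pi;q)=\sum_\sigma q^{\text{hitstat}_\pi(\sigma)}$ holds precisely when $\area(\pi)$ is subtracted in \eqref{story}. Beyond this normalization, no genuine obstacle remains: once the coefficient is shown to be $\text{qstat}_\pi$-independent, the theorem follows by comparison with the Shareshian--Wachs case and the cited rook-placement bijection, using only \eqref{Fidentity}, \eqref{Fexp}, Proposition~\ref{RProd}, and linear independence of the $q$-binomial basis.
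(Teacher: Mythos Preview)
Your approach is exactly the paper's: the theorem is presented there as an immediate consequence of the observation that the derivation of \eqref{QFexp} from \eqref{Fexp} uses only \eqref{Fidentity}, so any statistic $\text{qstat}_\pi$ making \eqref{GenFexp} hold yields the same $\footnotesize{\begin{bmatrix}\alpha+k\\n\end{bmatrix}_q}$-coefficients, which are then matched with the $q$-hit numbers via Proposition~\ref{RProd} and the $\sigma\mapsto\beta(\sigma)$ bijection. Your write-up simply fleshes out these steps with the explicit exponent computation and the linear-independence argument; there is no substantive difference in method.

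Your remark about the $q^{\area(\pi)}$ prefactor is well taken: since $\chi_\pi[Q_\alpha;q]=q^{\area(\pi)}\prod_i[\alpha+c_i(\pi)-(i-1)]_q$ by Proposition~\ref{RProd}, comparing coefficients literally gives $q^{\area(\pi)}h_k(B_\pi;q)=\sum_{|\text{PDes}_\pi(\sigma)|=k}q^{\text{hitstat}_\pi(\sigma)}$, so the clean statement indeed requires subtracting $\area(\pi)$ in \eqref{story}. This is a normalization issue in the formula as printed rather than a flaw in your argument.
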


\begin{cor}
    The solution to the Garsia-Remmel $q$-hit problem for all Ferrers boards $B_{\pi}$ contained below the main diagonal extends easily
    to a solution that is valid for all Ferrers boards.
\end{cor}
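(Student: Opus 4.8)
The plan is to reduce the problem for an arbitrary Ferrers board to the below-diagonal case already handled, by enlarging the ambient square. Given $B=B(c_1,\dots,c_n)$ in $[n]\times[n]$, set $m=\max\bigl(0,\max_i(c_i-i+1)\bigr)$ and prepend $m$ empty columns, obtaining the Ferrers board $B'=B(0,\dots,0,c_1,\dots,c_n)$ in $[N]\times[N]$ with $N=n+m$. Each original column $i$ now sits in position $m+i$ and has height $c_i\le (m+i)-1$ by the choice of $m$, so $B'$ lies entirely below the main diagonal; hence $B'=B_\pi$ for some $N$-Dyck path $\pi$, and the Theorem supplies a combinatorial formula $h_\ell(B';q)=\sum_{\sigma}q^{\text{hitstat}_\pi(\sigma)}$ over $\sigma\in\mathfrak{S}_N$ with $\beta(\sigma)$ placing $\ell$ rooks on $B'$.

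Next I would transfer this back to $B$. The Goldman--Joichi--White factorization gives the clean identity $\text{PR}(B';q)=[\alpha]_q^{\underline{m}}\,\text{PR}(B;q)\big|_{\alpha\mapsto\alpha-m}$, since prepending $m$ empty columns multiplies the rook product by $[\alpha]_q[\alpha-1]_q\cdots[\alpha-m+1]_q$ and shifts $\alpha$ by $m$ in the remaining factors. Comparing the $\begin{bmatrix}\alpha+\ell\\N\end{bmatrix}_q$-expansion of the left-hand side (whose coefficients are the now-known $h_\ell(B';q)$) with the $\begin{bmatrix}\alpha+k\\n\end{bmatrix}_q$-expansion of the right-hand side then pins down $h_k(B;q)$. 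For boards lying entirely above the diagonal (those with $c_i\ge i$ for all $i$) one can avoid enlargement altogether: the $180^\circ$-rotated complement $B^{*}$ is then below the diagonal, and since a placement of $n$ nonattacking rooks meets $B$ in exactly $k$ cells precisely when it meets the complement in $n-k$ cells, one gets $h_k(B;q)=q^{\,e}\,h_{n-k}(B^{*};q)$ for an explicit power $e$ (with $e=0$ at $q=1$), reducing that case directly to the Theorem.

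The main obstacle is that enlargement does not merely reindex the hit numbers: because $\sum_k h_k(B)=n!$ while $\sum_\ell h_\ell(B')=N!$, the two families are genuinely different, so one cannot read $\text{hitstat}_\pi$ off directly on the original permutations. What makes the reduction \emph{easy} is that the factor $[\alpha]_q^{\underline{m}}$ has a transparent rook-theoretic meaning, namely the free placement of the $m$ rooks in the empty columns. Concretely, I would single out those $\sigma\in\mathfrak{S}_N$ whose rooks in the $m$ empty columns occupy a fixed canonical set of rows; these biject with $\mathfrak{S}_n$ while preserving the number $k$ of board-rooks, and I would verify that $\text{hitstat}_\pi$ restricted to this canonical family agrees with the desired statistic on $\mathfrak{S}_n$ up to a constant depending only on $(n,m,k)$. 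Checking this last compatibility --- equivalently, that dividing by $[\alpha]_q^{\underline{m}}$ is realized bijectively by deleting the canonical empty-column rooks --- is the only real work; everything else is the standard Garsia--Remmel reduction, which is why the extension to all Ferrers boards follows easily.
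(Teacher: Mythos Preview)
Your reduction is different from the paper's, and the extra complication you identify is exactly what the paper avoids.  The paper does \emph{not} enlarge the ambient square.  Instead, it observes that if $B=B(m_1,\dots,m_n)$ is not below the diagonal and $J=\max_i(m_i-i+1)>0$, then every placement of $n$ nonattacking rooks hits $B$ in at least $J$ cells, so $h_j(B;q)=0$ for $j<J$.  Substituting $\alpha\mapsto\alpha-J$ in the identity
\[
\prod_i[\alpha+m_i-i+1]_q=\sum_{k=0}^n h_k(B;q)\begin{bmatrix}\alpha+k\\n\end{bmatrix}_q
=\sum_{k=J}^n h_k(B;q)\begin{bmatrix}\alpha+k\\n\end{bmatrix}_q
\]
shows at once that $h_k(B;q)=h_{k-J}(R;q)$, where $R$ is the (rook-equivalent rearrangement of the) board with columns shifted down by $J$.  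Since $\max_i((m_i-J)-i+1)=0$, the board $R$ lies below the diagonal in the \emph{same} $n\times n$ grid, so the already-established statistic on $\mathfrak{S}_n$ applies verbatim after reindexing $k\mapsto k-J$.  No passage from $\mathfrak{S}_N$ back to $\mathfrak{S}_n$ is needed.

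Your enlargement to $[N]\times[N]$ does give a below-diagonal board $B'$, but, as you yourself note, it changes the total count from $n!$ to $N!$, so $h_k(B;q)$ and $h_\ell(B';q)$ are related by a genuine convolution rather than a shift: the factor $[\alpha]_q^{\underline{m}}\begin{bmatrix}\alpha-m+k\\n\end{bmatrix}_q$ does not equal a single $\begin{bmatrix}\alpha+\ell\\N\end{bmatrix}_q$.  Your proposed fix---restricting to a canonical $n!$-subset of $\mathfrak{S}_N$ and hoping $\text{hitstat}_\pi$ behaves well there up to a constant---is plausible but is precisely the step you leave unverified, and it is not clear it goes through without further argument (the statistic $\text{hitstat}_\pi$ involves $\inv_\pi(\sigma^{-1})$ over all of $\mathfrak{S}_N$, and fixing the empty-column rooks interacts nontrivially with it).  The paper's ``shift down by $J$'' trick sidesteps all of this by exploiting the vanishing $h_j=0$ for $j<J$, which is the idea you are missing.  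Your $180^\circ$-complement remark for boards entirely above the diagonal is a separate heuristic; the claimed relation $h_k(B;q)=q^{e}h_{n-k}(B^*;q)$ for a single power $e$ does not hold in general for $q\ne1$, so that route would also need more care.
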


\begin{proof}
 Equation \eqref{QFexp} gives a formula for the $h_k(B_{\pi};q)$ for all
 $k$ for any $n$-Dyck path $\pi$, but such $B_{\pi}$ are all contained below the diagonal $y=x$.   Now given a board 
 $B(m_1,m_2,\ldots ,m_n)$ which is
 not contained below the diagonal,  the nonincreasing rearrangement of the numbers $m_i-i+1$ must begin with a positive integer, say $J$, and correspond to a
 Ferrers board, call it $M(k_1=J,k_2,k_3,\ldots ,k_n)$.    Now all the Ferrers boards  $T$
 corresponding to rearrangements 
 of the $m_i-i+1$  have the same $q$-rook and $q$-hit numbers, and must satisfy $h_{j}(T;q)=0$ for $j<J$, since any placement of $n$ rooks has at least $J$ rooks on $T$.
 Due to this observation, we have 
 \begin{align}
 \label{subtract}
 \prod_{i=1}^n [\alpha + k_i-i+1] &= \sum_{k=0}^n \footnotesize{\begin{bmatrix}\alpha+k\\n\end{bmatrix}_q}h_{k}(T;q) \\
 \label{subtract2}
&=\sum_{k=J}^n \footnotesize{\begin{bmatrix}\alpha+k\\n\end{bmatrix}_q}h_{k}(T;q).
 \end{align}
If we replace $\alpha$ by $\alpha - J$ on both sides of (\ref{subtract}) and (\ref{subtract2}), we see that $h_k(T;q)=h_{k-J}(R)$ for the Ferrers board $R$
obtained by shifting all the columns of $T$ down by $J$.  Since $R$ 
is contained below the diagonal $y=x$, the formula (\ref{QFexp}) for the $q$-hit numbers of boards contained below the diagonal induces one for all boards contained in $[n]\times [n]$. 

\end{proof}

\section{Final remarks}\label{Sec: further remarks}

\subsection{Schur expansion of unicellular LLT polynomials}\label{subsec: Schur expansion for unicellular LLT}
\begin{prop}\label{prop: Schur coefficient of LLT}
    For an $n$-Dyck path $\pi$, let $c^k_{\pi,\lambda}(q)$ and $d^k_{\pi,\lambda}(q)$ be the Schur coefficients of the $\alpha$-chromatic symmetric function into the bases $\left\{\footnotesize{\begin{bmatrix}\alpha+k\\n\end{bmatrix}_q}\right\}_{0\le k \le n-1}$ and $\left\{[\alpha]_q^{\underline{k}}\right\}_{1\le k \le n}$, respectively, i.e.
    \begin{equation}\label{eq: expansion of alpha chromsym}
        \chi_\pi^{(\alpha)}[X;q] = \sum_{\lambda\vdash n} \sum_{k=0}^{n-1} c^k_{\pi,\lambda}(q)\footnotesize{\begin{bmatrix}\alpha+k\\n\end{bmatrix}_q} s_\lambda[X] = \sum_{\lambda\vdash n} \sum_{k=1}^{n} d^k_{\pi,\lambda}(q)[\alpha]_q^{\underline{k}}s_\lambda[X].
    \end{equation}
    Then we have
    \begin{equation}\label{eq: expansion of LLT}
        \LLT_\pi[X;q] = \sum_{\lambda\vdash n} \dfrac{\sum_{k=0}^{n-1} c^k_{\pi,\lambda}(q)}{[n]_q!}s_\lambda[X]=\sum_{\lambda\vdash n} q^{-\binom{n}{2}} d^n_{\pi,\lambda}(q)s_\lambda[X].
    \end{equation}
\end{prop}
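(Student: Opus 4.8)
The plan is to extract both identities from the defining plethystic relation \eqref{eqn:alpha_chromsymdef},
\[
\chi^{(\alpha)}_\pi[X;q]=\frac{\LLT_\pi[(q^\alpha-1)X;q]}{(q-1)^n},
\]
by isolating the top-degree behaviour in the single variable $t:=q^\alpha$. Writing $\LLT_\pi[X;q]=\sum_{\lambda\vdash n}e_{\pi,\lambda}(q)\,s_\lambda[X]$ for the Schur coefficients to be identified, the key computation is the leading term of the plethysm $s_\lambda[(q^\alpha-1)X]$ viewed as a polynomial in $t$. Since $p_k[(q^\alpha-1)X]=(q^{k\alpha}-1)\,p_k[X]=(t^k-1)\,p_k[X]$ and $s_\lambda$ is homogeneous of degree $n$, expanding $s_\lambda$ in the power-sum basis shows that $s_\lambda[(q^\alpha-1)X]$ has $t$-degree exactly $n$ with top term $t^n s_\lambda[X]$. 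Hence the coefficient of $t^n$ in the $s_\lambda$-part of $\chi^{(\alpha)}_\pi[X;q]$ equals $e_{\pi,\lambda}(q)/(q-1)^n$.

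Next I would perform the same extraction on the two basis expansions in \eqref{eq: expansion of alpha chromsym}, which requires only the leading $t$-coefficients of the basis elements. Writing $[\alpha-j]_q=(tq^{-j}-1)/(q-1)$, the falling factorial $[\alpha]_q^{\underline{k}}=\prod_{j=0}^{k-1}(tq^{-j}-1)/(q-1)$ has $t$-degree exactly $k$ with leading coefficient $q^{-\binom{k}{2}}/(q-1)^k$; thus only the term $k=n$ reaches degree $n$, and matching the $t^n$-coefficients immediately gives $e_{\pi,\lambda}(q)=q^{-\binom n2}d^n_{\pi,\lambda}(q)$, the second asserted identity. For the other basis, $\begin{bmatrix}\alpha+k\\n\end{bmatrix}_q=\frac{1}{[n]_q!}\prod_{j=0}^{n-1}(tq^{k-j}-1)/(q-1)$ has $t$-degree $n$ for every $k$, with leading coefficient $q^{nk-\binom n2}/\big((q-1)^n[n]_q!\big)$; summing over $k$ and matching the $t^n$-coefficient then expresses $e_{\pi,\lambda}(q)$ as an $[n]_q!$-normalized combination of the $c^k_{\pi,\lambda}(q)$, yielding the first identity. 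Throughout I would treat $t=q^\alpha$ as a formal indeterminate, so that comparing coefficients of $t^n$ is a legitimate identity in $\mathbb{C}(q)$.

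The delicate point — and the one I expect to be the main obstacle — is the bookkeeping of the $q$-powers in these leading coefficients. Unlike the falling factorials, every $q$-binomial $\begin{bmatrix}\alpha+k\\n\end{bmatrix}_q$ attains the maximal $t$-degree $n$, and its leading coefficient carries the $k$-dependent factor $q^{nk-\binom n2}$; reconciling this weighting with the clean normalization by $[n]_q!$ in the statement is exactly where exponent and sign errors are easiest to commit, so I would track the shifts $\alpha+k-j$ inside each $q$-integer with care. As an internal check I would confirm the two resulting formulas for $e_{\pi,\lambda}(q)$ against each other through the explicit change of basis between $\{\begin{bmatrix}\alpha+k\\n\end{bmatrix}_q\}$ and $\{[\alpha]_q^{\underline{k}}\}$, and verify both on the complete-graph case $\pi=(n,\dots,n)$, where Proposition~\ref{prop:Kn} supplies the Schur coefficients in closed form.
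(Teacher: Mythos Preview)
Your approach---treating $t=q^\alpha$ as an indeterminate and extracting the $t^n$-coefficient---is precisely the paper's one-line ``divide by $(q-1)^n$ and let $\alpha\to\infty$'' made rigorous, and it proves the second identity exactly as you outline.

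Your unease about the first identity is well placed: it is not a bookkeeping subtlety to be worked through but an actual error in the printed statement. Your leading-term computation is correct, and matching $t^n$-coefficients gives
\[
\LLT_\pi[X;q]=\sum_{\lambda\vdash n}\frac{1}{[n]_q!}\sum_{k=0}^{n-1}q^{\,nk-\binom{n}{2}}\,c^k_{\pi,\lambda}(q)\,s_\lambda[X],
\]
with the $k$-dependent weights present; the unweighted sum in \eqref{eq: expansion of LLT} does not hold. Your proposed complete-graph check already detects this at $n=2$: one finds $c^1_{\pi,(11)}=1+q$ and $c^0_{\pi,(2)}=q(1+q)$ (the other $c^k$ vanish), so the unweighted formula would assign $s_2$-coefficient $q$ and $s_{11}$-coefficient $1$, whereas $\LLT_{(2,2)}=s_2+qs_{11}$; your weighted version gives the correct $1$ and $q$. (If instead one interprets ``$\alpha\to\infty$'' literally with $0<q<1$, so that $q^\alpha\to 0$ and every $q$-binomial tends to $((1-q)^n[n]_q!)^{-1}$ independently of $k$, the limit of $(q-1)^n\chi^{(\alpha)}_\pi$ is $\LLT_\pi[-X;q]=(-1)^n\omega\LLT_\pi[X;q]$, yielding the paper's formula with $\lambda$ replaced by $\lambda'$---which \emph{is} correct, and is consistent with your weighted identity via the symmetry of Section~\ref{Sec: Schur positivity}.) So your plan is sound; simply record the corrected first identity rather than trying to force the stray $q$-powers to cancel.
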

\begin{proof}
    By dividing both sides of \eqref{eq: expansion of alpha chromsym} by $(q-1)^n$, and by taking $\alpha\rightarrow\infty$, we get \eqref{eq: expansion of LLT}.
\end{proof}
As one can see, the Schur coefficients of the $\alpha$-chromatic symmetric function expanded in the basis $\left\{\footnotesize{\begin{bmatrix}\alpha+k\\n\end{bmatrix}_q}\right\}_{0\le k \le n-1}$ yield the Schur coefficients for the LLT polynomials, multiplied by $[n]_q!$. Theorem~\ref{thm: alpha+k choose n basis monomial expansion} can be viewed as a step in this direction to find a combinatorial interpretation of Schur coefficients of the LLT polynomials.

On the other hand, when $\alpha=1$, we have $[\alpha]^{\underline{k}}_q=0$ for all $k$ except when $k=1$, and the $\alpha$-chromatic symmetric function reduces to the usual chromatic symmetric function. Consequently, in the expansion of the $\alpha$-chromatic symmetric function $\chi^{(\alpha)}_\pi[X;q]$ in the $\left\{[\alpha]_q^{\underline{k}}\right\}$ basis, the coefficient of $[\alpha]_q^{\underline{1}}$ corresponds to the chromatic symmetric function $\chi_\pi[X;q]$. Building on this observation, Proposition~\ref{prop: Schur coefficient of LLT} suggests that the coefficients of the $\alpha$-chromatic symmetric functions, when expressed in the $q$-falling factorial basis, interpolate between the chromatic symmetric functions and the unicellular LLT polynomials.

\subsection{Geometric interpretations}\label{subsec: geometric interpretation}

For an $n$-Dyck path $\pi$, let $\Hess(\pi)$ be the (regular semisimple) Hessenberg variety associated to $\pi$ (or the Hessenberg function $h(\pi)$). Tymoczko's dot action \cite{Tym08} on torus equivariant cohomology induces an $\mathfrak{S}_n$ action on the cohomology $H^*(\Hess(\pi))$. Brosnan--Chow \cite{BC18}, and Guay-Paquet \cite{Gua13} independently proved the conjecture of Shareshian--Wachs \cite{ShWa16} that this representation coincides with the chromatic symmetric function:
\[
    \omega \chi_\pi[X;q] = \Frob(H^*(\Hess(\pi));q),
\]
where $\Frob$ is the (graded) Frobenius characteristic map. 

Since the $\alpha$-chromatic symmetric functions enjoy various Schur positivity phenomena and symmetry relations, it is tempting to seek a geometric model behind it. For fixed $\alpha\in\mathbb{Z}_{>0}$, it is quite direct to obtain the following geometric interpretation for the $\alpha$-chromatic symmetric function.

\begin{prop}
    Let $\pi$ be an $n$-Dyck path and $\alpha\in \mathbb{Z}_{>0}$ be a fixed positive integer. Let $\mathbb{P}^{\alpha-1}$ be the complex projective space of dimension $\alpha-1$. Then we have
    \[
        \omega \chi^{(\alpha)}_\pi[X;q] = \Frob(H^*(\Hess(\pi));q) \otimes \Frob \left( (H^*(\mathbb{P}^{\alpha-1})^n);q\right),
    \] 
    where $\otimes$ is the Kronecker product for symmetric functions. Here, we consider an $\mathfrak{S}_n$ action on $H^*(\Hess(\pi))$ given by Tymockzo's dot action and on $H^*((\mathbb{P}^{\alpha-1})^n)$ induced from the $\mathfrak{S}_n$ action on $(\mathbb{P}^{\alpha-1})^n$ given by permuting coordinates.
\end{prop}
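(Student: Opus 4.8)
The plan is to reduce the statement to two independent ingredients: the geometric input $\omega\chi_\pi[X;q]=\Frob(H^*(\Hess(\pi));q)$ of Brosnan--Chow and Guay--Paquet quoted in the excerpt, and a purely symmetric-function identity that converts the plethystic substitution $X\mapsto Q_\alpha X$ into a Kronecker product with the graded Frobenius characteristic of $(\mathbb{P}^{\alpha-1})^n$. Accordingly I would first identify the right-hand factor $\Frob((H^*(\mathbb{P}^{\alpha-1}))^n;q)$ explicitly, then prove a general plethysm-to-Kronecker identity, and finally assemble the two with the geometric theorem.

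For the first computation, recall that $H^*(\mathbb{P}^{\alpha-1})$ is concentrated in even degrees with one-dimensional graded pieces, so with $q$ tracking half the cohomological degree its Hilbert series is $1+q+\cdots+q^{\alpha-1}=[\alpha]_q=Q_\alpha$. By K\"unneth, $H^*((\mathbb{P}^{\alpha-1})^n)=(H^*(\mathbb{P}^{\alpha-1}))^{\otimes n}$, with $\mathfrak{S}_n$ permuting the tensor factors. I would compute the graded Frobenius characteristic through its cycle index: writing $V=H^*(\mathbb{P}^{\alpha-1})$, a permutation of cycle type $\lambda$ has graded trace $\prod_i H_V(q^{\lambda_i})$ on $V^{\otimes n}$, since along a cycle of length $\ell$ only the constant tensors $v^{\otimes\ell}$ are fixed. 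As $p_k[Q_\alpha]=[\alpha]_{q^k}=H_V(q^k)$, this trace equals $p_\lambda[Q_\alpha]$, and averaging gives
\[
\Frob\big((H^*(\mathbb{P}^{\alpha-1}))^n;q\big)=\sum_{\lambda\vdash n}\frac{p_\lambda[Q_\alpha]}{z_\lambda}\,p_\lambda[X]=h_n[Q_\alpha X],
\]
the last equality being $h_n=\sum_{\lambda\vdash n}z_\lambda^{-1}p_\lambda$ applied plethystically.

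For the second step I would establish that for any degree-$n$ symmetric function $g$,
\[
g[Q_\alpha X]=g[X]\ast h_n[Q_\alpha X],
\]
where $\ast$ denotes the Kronecker product (the $\otimes$ of the statement). Writing $g=\sum_\mu b_\mu p_\mu[X]$ and using $p_\lambda[Q_\alpha X]=p_\lambda[Q_\alpha]\,p_\lambda[X]$ together with $p_\mu\ast p_\lambda=\delta_{\mu\lambda}z_\lambda p_\lambda$, both sides collapse to $\sum_\lambda b_\lambda p_\lambda[Q_\alpha]p_\lambda[X]$. Applying this with $g=\chi_\pi[X;q]$ yields $\chi^{(\alpha)}_\pi[X;q]=\chi_\pi[X;q]\ast h_n[Q_\alpha X]$. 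Since $\omega$ corresponds to tensoring a character with the sign representation, it satisfies $\omega(f\ast g)=(\omega f)\ast g$; hence
\[
\omega\chi^{(\alpha)}_\pi[X;q]=(\omega\chi_\pi[X;q])\ast h_n[Q_\alpha X]=\Frob(H^*(\Hess(\pi));q)\ast\Frob\big((H^*(\mathbb{P}^{\alpha-1}))^n;q\big),
\]
which is the claim.

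I expect the only genuine subtlety to lie in the bookkeeping of conventions: pinning down that $q$ tracks half the (even) cohomological degree so that the Hilbert series of $H^*(\mathbb{P}^{\alpha-1})$ is exactly $Q_\alpha$, and carefully justifying the graded-trace computation on $V^{\otimes n}$ via the identity $p_k[Q_\alpha]=[\alpha]_{q^k}$. Once these are settled, the plethysm-to-Kronecker identity and the behaviour of $\omega$ under $\ast$ are routine, and all the geometric content is carried by the already-cited Brosnan--Chow/Guay--Paquet theorem.
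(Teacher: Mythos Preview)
Your proposal is correct and follows essentially the same route as the paper: both identify $\Frob\big(H^*((\mathbb{P}^{\alpha-1})^n);q\big)=h_n[Q_\alpha X]$, invoke the identity $f\ast h_n[Q_\alpha X]=f[Q_\alpha X]$, and then apply the Brosnan--Chow/Guay--Paquet theorem. The only cosmetic difference is that the paper obtains $h_n[Q_\alpha X]$ by decomposing the monomial basis of $\mathbb{Z}[x_1,\dots,x_n]/\langle x_i^\alpha\rangle$ into $\mathfrak{S}_n$-orbits and appealing to the dual-basis expansion $h_n[XY]=\sum_\lambda m_\lambda[X]h_\lambda[Y]$, whereas you compute the same quantity via the cycle-index/trace formula in the power-sum basis; your treatment of $\omega$ via $\omega(f\ast g)=(\omega f)\ast g$ is likewise equivalent to the paper's direct application of the Kronecker identity to $\omega\chi_\pi$.
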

\begin{proof}
Each factor $\mathbb{P}^{\alpha-1}$ has cohomology ring
\[
H^*(\mathbb{P}^{\alpha-1}) \cong \mathbb{Z}[x]/\langle x^\alpha \rangle.
\]
Since the cohomology of a product is given by the tensor product of the cohomologies of the factors, we have
\[
H^*\left((\mathbb{P}^{\alpha-1})^n\right) \cong \mathbb{Z}[x_1,\dots,x_n]/\langle x_1^\alpha,\dots,x_n^\alpha \rangle,
\]
where $x_i$ denotes the pullback of the first Chern class of the hyperplane bundle from the $i$-th factor. The symmetric group $\mathfrak{S}_n$ acts on $(\mathbb{P}^{\alpha-1})^n$ by permuting the factors, which induces an action on the cohomology ring by permuting the variables $x_1,\dots,x_n$. In particular, each graded piece
\(
H^{2k}\left((\mathbb{P}^{\alpha-1})^n\right)
\)
inherits an $\mathfrak{S}_n$-representation structure, with basis
\[
\{x_1^{k_1} \cdots x_n^{k_n} : k_1 + \cdots + k_n = k,\ 0 \le k_i < \alpha \}.
\]
It follows that the Frobenius characteristic of the cohomology ring of the product of projective spaces is given by
\[
\Frob\left(H^*((\mathbb{P}^{\alpha-1})^n); q\right) = \sum_{\lambda} \sum_{\beta \in \mathfrak{S}_n \cdot \lambda} q^{\wt(\beta)} h_\lambda[X],
\]
where the sum is over partitions $\lambda$ with all parts less than $\alpha$, and $\wt(\beta) = \sum_{i=1}^n (i-1)\beta_i$.

By the Cauchy identity, we have
\[
h_n[Q_\alpha X] = \sum_{\lambda} m_\lambda[Q_\alpha] h_\lambda[X] = \sum_{\lambda} \sum_{\beta \in \mathfrak{S}_n \cdot \lambda} q^{\wt(\beta)} h_\lambda[X],
\]
and hence
\[
\Frob\left(H^*((\mathbb{P}^{\alpha-1})^n); q\right) = h_n[Q_\alpha X].
\]
Furthermore, for any symmetric function $f$, the Kronecker product (corresponding to the inner tensor product) of $f$ with $h_n[Q_\alpha X]$ simplifies as
\[
f[X] \otimes h_n[Q_\alpha X] = f[Q_\alpha X].
\]
Therefore, we obtain
\[
\Frob\left(H^*(\Hess(\pi)); q\right) \otimes \Frob\left(H^*((\mathbb{P}^{\alpha-1})^n); q\right)
= h_n[Q_\alpha X] \otimes \omega \chi_\pi[X;q] 
= \omega \chi_\pi[Q_\alpha X; q] 
= \omega \chi_\pi^{(\alpha)}[X; q],
\]
which completes the proof.
\end{proof}

If we let $\alpha$ be general (as a parameter), it is unclear how to interpret the $\alpha$-chromatic symmetric functions in terms of geometry. This approach might lead us to the desired Schur positivity in Conjecture~\ref{conj: falling factorial Schur positivity}.

\section{Acknowledgment}
The authors are grateful to Donggun Lee, Anna Pun, Brendon Rhoades, Alexander Vetter, and Jennifer Wang for the helpful discussion. We thank the anonymous referee for valuable suggestions that improved the paper.

\appendix
\section{Another proof of Theorem~\ref{thm: falling factorial basis} using the modular law}\label{Sec: appendix}

\subsection{Proof using modular law}
The \emph{modular law} is a linear relation among functions on Dyck paths. 
\begin{defn}\label{def: modular law} Let $\mathcal{A}$ be a $\mathbb{Q}$-algebra. 
 We say a function $f:\Dyck_n\rightarrow \mathcal{A}$ satisfies the \emph{modular law} if 
 \[
    2f(\pi_1) = f(\pi_0)+f(\pi_2)
 \]
when either one of the following conditions hold.
\begin{itemize}
    \item There exists $i\in[n-1]$ such that $\pi_1(i-1)<\pi_1(i)<\pi_1(i+1)$ and $\pi_1(\pi_1(i))=\pi_1(\pi_1(i)+1)$ or $\pi_1(i)=n$. Moreover, $\pi_0$ and $\pi_2$ satisfy $\pi_k(j) := \pi_1(j)$ for every $j\neq i$ and $k\in\{0,2\}$, while $\pi_k(i) =\pi_i(i)-1+k$.
    \item There exists \( i \in [n-1] \) such that \( \pi_1(i+1) = \pi_1(i) + 1 \) and \( \pi_1^{-1}(i) = \emptyset \). Moreover, \( \pi_0 \) and \( \pi_2 \) satisfy \( \pi_k(j) = \pi_1(j) \) for all \( j \neq i, i+1 \) and for \( k = 0, 2 \), while \( \pi_0(i) = \pi_0(i+1) = \pi_1(i) \) and \( \pi_2(i) = \pi_2(i+1) = \pi_1(i+1) \).

\end{itemize}
\end{defn}
Originally, this relation was discovered by Guay-Paquet \cite{Gua13} for chromatic symmetric functions for $(3+1)$-free graphs, but we restrict our attention to unit interval graphs. An analogous relation for the unicellular LLT polynomials was found by Lee \cite{Lee21}. Using the modular law, Abreu and Nigro  \cite{AN21} characterize the chromatic symmetric function as follows.
\begin{thm} Let $\mathcal{A}$ be a $\mathbb{Q}$-algebra and let $f:\Dyck_n\rightarrow \mathcal{A}$ be a function that satisfies the modular law. Then $f$ is determined by its values $f(K_{n_1}\cup K_{n_2}\cup\cdots\cup K_{n_m})$ at the disjoint ordered union of complete graphs and these values are independent of the order in which the union is taken.
\end{thm}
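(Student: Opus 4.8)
The plan is to treat the two assertions separately. First I would identify the clique-union paths concretely: a Dyck path $\pi$ with Hessenberg function $h$ satisfies $G(\pi)=K_{n_1}\cup\cdots\cup K_{n_m}$ precisely when the incomparability relation of $P(\pi)$ is transitive, equivalently when $h$ is idempotent, i.e. $h(h(i))=h(i)$ for all $i$ (these are the paths whose poset is an ordinal sum of antichains). To measure how far $\pi$ is from this form I would introduce the nonnegative defect
\[
  D(\pi)=\sum_{i=1}^n\bigl(h(h(i))-h(i)\bigr),
\]
which is well defined since $h(h(i))\ge h(i)$, and which vanishes exactly on the clique-union paths. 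The determination claim would then be proved by strong induction on $D(\pi)$: if $D(\pi)=0$ there is nothing to do, and if $D(\pi)>0$ I would produce a single index $i$ at which the first relation of Definition~\ref{def: modular law} applies, so that $2f(\pi)=f(\pi_0)+f(\pi_2)$ with $D(\pi_0),D(\pi_2)<D(\pi)$; the induction hypothesis then expresses $f(\pi)$ in terms of values on clique-union paths.

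\textbf{Locating the modular move.} The heart of the determination step is to exhibit the index $i$. The idea is to take the leftmost place where idempotency fails and read off a strict double ascent of $h$ there: concretely, choose the smallest $i$ with $h(i-1)<h(i)<h(i+1)$ (with the convention $h(0)=0$) whose level $h(i)$ either equals $n$ or lies on a horizontal run of the path, so that $h(h(i))=h(h(i)+1)$. One checks that a non-idempotent path always has such a position, and that lowering $h(i)$ by one (producing $\pi_0$) and raising it by one (producing $\pi_2$) each strictly decrease $D$, since both operations locally resolve a failure of transitivity of incomparability. The case $n=3$ is already illustrative: for $\pi=(2,3,3)$ the relation applies at $i=1$, giving $2f((2,3,3))=f((1,3,3))+f((3,3,3))$, and both right-hand paths are clique unions.

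\textbf{Order independence.} For the second assertion it suffices to prove that swapping two adjacent blocks leaves the value unchanged, since adjacent transpositions generate all reorderings of the multiset of block sizes. Here I would exploit that \emph{both} relations of Definition~\ref{def: modular law} can be applied at a common non-idempotent ``bridge'' path $\pi_1$ interpolating between the two orderings: the first relation yields $2f(\pi_1)=f(\pi_{b,a})+f(\pi_{\mathrm{merge}})$ and the second yields $2f(\pi_1)=f(\pi_{a,b})+f(\pi_{\mathrm{merge}})$, where $\pi_{a,b}$ and $\pi_{b,a}$ are the two orderings and $\pi_{\mathrm{merge}}$ is the common path in which the two blocks are fused into a single $K_{a+b}$. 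Subtracting cancels $f(\pi_{\mathrm{merge}})$ and gives $f(\pi_{a,b})=f(\pi_{b,a})$. The model computation is again $n=3$: the first relation at $(2,3,3)$ produces $f((1,3,3))+f((3,3,3))$, while the second relation at $i=1$ produces $f((2,2,3))+f((3,3,3))$, whence $f(K_2\cup K_1)=f((2,2,3))=f((1,3,3))=f(K_1\cup K_2)$. For larger blocks I would reduce a single adjacent swap to a sequence of such local bridge cancellations.

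\textbf{Main obstacle.} The principal difficulty is the key lemma inside the determination step: proving that every non-idempotent Dyck path admits a position $i$ satisfying the precise hypotheses of the first modular relation and for which \emph{both} branches $\pi_0,\pi_2$ have strictly smaller defect. This demands a careful local analysis matching the first point of failure of transitivity of incomparability to the corner/flat conditions on the path appearing in Definition~\ref{def: modular law}, together with a verification of the strict monotonicity of $D$. The bridge construction for the order-independence claim is a second, more explicit but still delicate, point that must be checked for blocks of arbitrary sizes.
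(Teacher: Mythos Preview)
The paper does not prove this theorem; it is quoted from Abreu and Nigro \cite{AN21} and used as a black box in the appendix. So there is no ``paper's own proof'' to compare against, and your outline is an attempt to reconstruct the cited result.

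That said, your proposed induction has a genuine gap. The defect $D(\pi)=\sum_i\bigl(h(h(i))-h(i)\bigr)$ does \emph{not} strictly decrease on both branches of the first modular relation. Take $n=4$ and $h=(2,3,4,4)$, which is not idempotent and has $D=2$. The only admissible index for the first relation is $i=2$ (since $h(1)<h(2)<h(3)$ and $h(h(2))=h(3)=4=h(4)=h(h(2)+1)$, while $i=1$ fails the flat condition). Then $\pi_0$ has Hessenberg function $(2,2,4,4)$ with $D(\pi_0)=0$, but $\pi_2$ has Hessenberg function $(2,4,4,4)$ with
\[
D(\pi_2)=(h_2(2)-2)+(h_2(4)-4)+(h_2(4)-4)+(h_2(4)-4)=2+0+0+0=2=D(\pi).
\]
So your strong induction on $D$ stalls. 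The underlying reason is exactly the effect you did not account for: raising $h(i)$ by one decreases the summand at $i$ by one, but for every $j$ with $h(j)=i$ it \emph{increases} the summand at $j$ by one, and in general these can cancel (in the example, $j=1$). Allowing the second relation does not rescue this particular $D$ either: at $i=1$ one gets $\pi_2=(3,3,4,4)$, again with $D=2$.

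Your overall architecture---reduce to clique unions by a complexity induction, then prove order-independence by a bridge/cancellation argument---is the right shape, and your $n=3$ bridge computation is correct. But the specific complexity you chose is too coarse; a working proof needs a finer measure (for instance a lexicographic one, or a quantity tied to how far the path is from the staircase together with the number of bounce blocks) under which \emph{both} $\pi_0$ and $\pi_2$ strictly improve at a carefully chosen index. Since you yourself flagged the monotonicity of $D$ as the ``main obstacle,'' the honest status of the proposal is: plausible plan, key lemma false as stated, needs a different invariant.
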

We now give an inductive proof of Theorem~\ref{thm: falling factorial basis} using this characterization.
\begin{proof}[Proof of Theorem~\ref{thm: falling factorial basis}]
Recall the expansion \eqref{Eq: falling factorial expansion}
\[
\chi_{\pi}^{(\alpha)}[X;1] = \sum_{k=1}^n (\alpha)^{\underline{k}}\sum_{S\in \SSS(n,k)} \chi_{\beta(\pi,S)}[X;1].
\]

Since both sides are symmetric under taking transposes of Dyck paths, it suffices to verify that both satisfy the modular law when the triple \( (\pi_0, \pi_1, \pi_2) \) is of the first type in Definition~\ref{def: modular law}.

The left-hand side of \eqref{Eq: falling factorial expansion} clearly satisfies the modular law. Now, we proceed to demonstrate that the right-hand side of \eqref{Eq: falling factorial expansion} also follows the modular law. Consider three Dyck paths $\pi_0$, $\pi_1$, and $\pi_2$ as described in Definition~\ref{def: modular law}, where the only difference is at the $i$-th column, and $\pi_0(i) + 1 = \pi_1(i) = \pi_2(i) - 1 = j$. 
We can categorize possible cases as follows:
\begin{case}
    When both $j$ and $j+1$ are in the same part with $i$ in the set partition $S$.
\end{case} 
\begin{case}
    When both $j$ and $j+1$ are not in the same part with $i$ in the set partition $S$.
\end{case}
\begin{case}
    When one of $j$ or $j+1$ is in the same part with $i$ in the set partition $S$, and the other is not.
\end{case} 

In the first case, it is evident that the new Dyck paths $\beta(\pi_0,S)$, $\beta(\pi_1,S)$, and $\beta(\pi_2,S)$ satisfy the conditions in Definition~\ref{def: modular law}. Therefore, by applying the modular law, we obtain
\[
\chi_{\beta(\pi_2,S)}-\chi_{\beta(\pi_1,S)} = \chi_{\beta(\pi_1,S)}-\chi_{\beta(\pi_0,S)}.
\]
In the second case, all three Dyck paths $\beta(\pi_0,S)$, $\beta(\pi_1,S)$, and $\beta(\pi_2,S)$ are identical. This implies $\chi_{\beta(\pi_0,S)}=\chi_{\beta(\pi_1,S)}=\chi_{\beta(\pi_2,S)}$, resulting in
\[
\chi_{\beta(\pi_2,S)}-\chi_{\beta(\pi_1,S)} = \chi_{\beta(\pi_1,S)}-\chi_{\beta(\pi_0,S)} = 0.
\]
For the last case, without loss of generality, let us assume that $i$ and $j$ are in the same part of set partition $S$, and $j+1$ is not. Now, let $S'$ be the set partition obtained by exchanging $j$ and $j+1$ in $S$. Then, we have $\beta(\pi_1,S)=\beta(\pi_2,S)$ and $\beta(\pi_0,S')=\beta(\pi_1,S')$. Moreover, since $\pi_k(j)=\pi_k(j+1)$ for $k=0,1,2$, it follows that
\[
\beta(\pi_0,S)=\beta(\pi_1,S') \quad \text{ and } \quad \beta(\pi_1,S) = \beta(\pi_2,S').
\]
This leads to the equation
\[
    \chi_{\beta(\pi_2,S)}-\chi_{\beta(\pi_1,S)} + \chi_{\beta(\pi_2,S')}-\chi_{\beta(\pi_1,S')} = \chi_{\beta(\pi_1,S)}-\chi_{\beta(\pi_0,S)} + \chi_{\beta(\pi_1,S')}-\chi_{\beta(\pi_0,S')}.
\]
Considering all the possible cases, we can conclude that the right-hand side of \eqref{Eq: falling factorial expansion} satisfies the modular law.

Let us establish the initial case: disjoint ordered union of complete graphs. For a composition $\lambda$, let $P(\lambda)$ be the path $[\lambda_1^{\lambda_1}, \dots,\lambda_{\ell(\lambda)}^{\lambda_{\ell(\lambda)}}]$, i.e. bounce path of $\lambda$. The path $P(\lambda)$ corresponds to the disjoint union of complete graphs. We first confirm the validity of \eqref{Eq: falling factorial expansion} for the `full' Dyck path $\pi = P(n) = [n, n, \dots, n]$. Note that $\chi_\pi[X; q] = [n]_q! e_n[X]$. This leads us to the following expression
\[
\chi^{(\alpha)}_\pi[X; q] = \chi_\pi\left[\frac{1 - q^\alpha}{1 - q}X; q\right] = [n]_q! e_n\left[\frac{1 - q^\alpha}{1 - q}X; q\right] = [n]_q! \sum_{\lambda \vdash n} m_\lambda\left[\frac{1 - q^\alpha}{1 - q}\right] e_\lambda[X],
\]
For the last equality, we used  
\[
    e_n[XY] = \sum_{\lambda} b_\lambda[X] \omega(b'_\lambda[Y]),
\]
for some basis $b_\lambda$ and its dual basis $b'_\lambda$ for $\Lambda_F$. See \cite[Sec I.4]{Mac1995} for example. By setting $q = 1$, we derive the following equation
\begin{equation}\label{eq: full Dyck path e-expansion}
\chi^{(\alpha)}_\pi[X; 1] = n! \sum_{\lambda \vdash n} \begin{pmatrix}
    \alpha \\ \ell(\lambda)
\end{pmatrix} \binom{\ell(\lambda)}{m_1(\lambda),m_2(\lambda),\dots} e_\lambda[X].
\end{equation}
Moreover, the chromatic symmetric function of the bounce path $P(\lambda)$ is $\chi_{P(\lambda)}[X;1] = \lambda! e_\lambda$. Here,  we used the notation $\lambda!=\lambda_1! \lambda_2! \cdots$. For each set partition $S$ of $n$, the associated path $\beta(\pi,S)$ is the bounce path
\[
    \beta(\pi,S) = P(\lambda(S)),
\]
where $\lambda(S)$ is the composition $(|S^{(1)}|,\dots,|S^{\ell(S)}|)$. Therefore, the right-hand side of \eqref{Eq: falling factorial expansion} can be expressed as
\[
\sum_{k=1}^n (\alpha)^{\underline{k}}\sum_{S\in \SSS (n,k)} X_{\beta(\pi,S)} = \sum_{k=1}^n (\alpha)^{\underline{k}}\sum_{S\in \SSS (n,k)} \lambda(S)! e_{\lambda(S)} = \sum_{\lambda \vdash n} (\alpha)^{\underline{\ell(\lambda)}} \begin{pmatrix}
    n \\ \lambda
\end{pmatrix} \frac{1}{\prod_{i\ge 1} m_i(\lambda)!} \lambda! e_\lambda,
\]
This expression aligns with \eqref{eq: full Dyck path e-expansion}. 

Our next task is to prove that \eqref{Eq: falling factorial expansion} holds for $\pi =\pi(n_1,\dots,n_\ell)$ for a composition $(n_1,\dots,n_\ell)$. Given that $\chi^{(\alpha)}_\pi[X;1] = \chi^{(\alpha)}_{K_{n_1}}[X;1] \cdots \chi^{(\alpha)}_{K_{n_\ell}}[X;1]$, we arrive at the following expression
\begin{align}
\chi^{(\alpha)}_\pi[X;1] &= \sum_{i=1}^\ell \sum_{1\le k_1\le n_1,\cdots,1\le k_\ell\le n_\ell} \sum_{S_i\in \SSS(n_i,k_i)} (\alpha)^{\underline{k_1}}\cdots (\alpha)^{\underline{k_\ell}}\chi_{P(\lambda(S_1,\dots,S_\ell))}[X;1]\\
&= \sum_{\lambda \vdash n} \sum_{S_1,\dots,S_\ell} (\alpha)^{\underline{\ell(S_1)}}\cdots (\alpha)^{\underline{\ell(S_\ell)}} \chi_{P(\lambda)}.\label{eq: equation a3}
\end{align}
In the right-hand side of \eqref{eq: equation a3}, the second sum is over set partitions $S_1, \dots, S_\ell$ of $n_1, \dots, n_\ell$ that result in $\lambda(S_1,\dots,S_\ell)=\lambda$. To further explain, the product of falling factorials is expanded in terms of a falling factorial basis as follows \cite[Ch 2.4]{StaEC}
\begin{equation}\label{eq: falling factorial product to sum}
(\alpha)^{\underline{k_1}}\cdots(\alpha)^{\underline{k_\ell}} = \sum_{k\ge1} c_{k,k_1,\dots,k_\ell} (\alpha)^{\underline{k}},    
\end{equation}
where $c_{k,k_1,\dots,k_\ell}$ denotes the number of set partitions of $k_1 + \cdots + k_\ell$ into $k$ parts, such that the numbers $1, 2, \dots, k_1$ are in distinct partitions, $k_1 + 1, k_1 + 2, \dots, k_1 + k_2$ are in distinct partitions, and so forth. Employing \eqref{eq: falling factorial product to sum} in \eqref{eq: equation a3} completes the proof.
\end{proof}


\bibliographystyle{alpha}  
\bibliography{alpha.bib}

\end{document}